     \def\section{\@startsection{section}{1}%
     \z@{.7\linespacing\@plus\linespacing}{.5\linespacing}%
     {\bfseries
     \centering
     }}
     \def\@secnumfont{\bfseries}
\numberwithin{equation}{section}
\theoremstyle{definition}
\newtheorem{dfn}{Definition}[section]
\newtheorem{ass}{Assumption}[section]  
\theoremstyle{plain}
\newtheorem{thm}{Theorem}[section]
\newtheorem{pro}{Proposition}[section]
\newtheorem{cor}{Corollary}[section]
\newtheorem{lmm}{Lemma}[section]
\theoremstyle{definition}
\newtheorem{rem}{Remark}[section]
\newtheorem{exa}{Example}[section]
\newcommand{\R}{\mathbb{R}}
\newcommand{\E}{\mathbb{E}}
\newcommand{\la}{\langle}
\newcommand{\ra}{\rangle}
\renewcommand{\H}{\mathcal{H}}
\newcommand{\1}{\mathbf{1}}
\newcommand{\changes}[1]{\textcolor{black}{#1}}
\begin{document}
\title[Optimal prediction of functional data]{On optimal prediction of missing functional data with memory}

\author[Ilmonen]{Pauliina Ilmonen}
\address{Department of Mathematics and Systems Analysis, Aalto University School of Science, FINLAND} 
\email{pauliina.ilmonen@aalto.fi}
\author[Shafik]{Nourhan Shafik}
\address{Department of Mathematics and Statistics, University of Helsinki, FINLAND} 
\email{nourhan.shafik@helsinki.fi}
\author[Sottinen]{Tommi Sottinen}
\address{Department of Mathematics and Statistics, University of Vaasa, FINLAND}
\email{tommi.sottinen@iki.fi}
\author[Van Bever]{Germain Van Bever}
\address{European Center of Advanced REsearch in Economics and Statistics, Universit\'e libre de Bruxelles $\&$ Department of Mathematics,  University of Namur, BELGIUM}\email{germain.van.bever@ulb.be}
\author[Viitasaari]{Lauri Viitasaari}
\address{Department of Information and Service Management, Aalto University School of Business, FINLAND}
\email{lauri.viitasaari@aalto.fi}

\begin{abstract}
This paper considers the problem of reconstructing missing parts of functions based on their observed segments. 
It provides, for Gaussian processes and arbitrary bijective transformations thereof, theoretical expressions for the $L^2$-optimal reconstruction of the missing parts. 
These functions are obtained as solutions of explicit integral equations. 
In the discrete case, approximations of the solutions provide consistent expressions of all missing values of the processes. 
Rates of convergence of these approximations, under extra assumptions on the transformation function, are provided. 
In the case of Gaussian processes with a parametric covariance structure, the estimation can be conducted separately for each function, and yields nonlinear solutions in presence of memory.
Simulated examples show that the proposed reconstruction indeed fares better than the conventional interpolation methods in various situations. 
\end{abstract}

\maketitle

\medskip\noindent
{\bf Mathematics Subject Classifications (2010)}: 62R10; 60G15; 60G25

\medskip\noindent
{\bf Keywords:} Gaussian processes; Missing data; Prediction; Regular conditional law

\maketitle

%%%%%%%%%%%%%%%%%%%%%%%%%%%%%%%%%%%%%%%%%%
\section{Introduction}
\label{Sec:intro}
%%%%%%%%%%%%%%%%%%%%%%%%%%%%%%%%%%%%%%%%%%
 
In functional data analysis, the observed units are random curves $(Y^1)_{t\in\mathcal{I}},\dots$, $(Y^J)_{t\in\mathcal{I}}$ defined on some domain $\mathcal{I}$. 
The standard setting, also adopted throughout this paper, is to assume that $t$ represents the time at which the functions are observed and that $\mathcal{I}=[0,T]\subset\mathbb{R}$, for $T>0$.
There is a vast body of work in functional data analysis, which often extends classical multivariate techniques to this particular setting. See, for example, \cite{Ramsay}, \cite{Fer2011}, \cite{Wanetal2016} and references therein.

While many procedures assume that the curves are fully observed, this will not be the case in most instances so that one has to proceed through a reconstruction step. 
The need for this step comes most often from the fact that only discretized measurements from each curve are available. 
More precisely, the collected data takes the form
$$Y_{ji}=Y^j_{t_{ji}},\ j=1,\dots,J,\ i=1,\dots,N_j,$$
where, in full generality, the measurement times $t_{ji}$ could vary in number or location within the curves themselves. 
%%% Curve reconstruction
Recovering the curves from their discrete measurements has been extensively explored in the statistical literature. 
Reconstruction methods typically depend on assumptions on the generating process of the curves $Y(t)$. They also, and most importantly, depend on the way the functions are discretized, that is, on the nature of the locations $t_{ji}$.% at which the functions are evaluated. 

The most classical example is to assume that $\{t_{ji}\ :\ j=1,\dots,J,; i=1,\dots,N_j\}$ is increasingly dense in $[0,T]$ as $\min_jN_j$ gets larger. 
This will be the case, for example, for regularly observed data (i.e. $t_{ji}=iT/N$, $i=1,\dots,N$ for some $N$) or for dense randomly observed data (i.e. $t_{ji}\sim\mathcal{U}([0,T])$ are i.i.d. uniform random locations in $\mathcal{I}$, with $N_j$ large). In such situation, one typically proceeds with standard smoothing techniques, which include classical penalized regression on spline or Fourier basis functions (see \cite{FerVie2006} or \cite{UllFin2013}).
 In the more general setting of sparse functional data (for which $N_j$ is small and $t_{ji}$ are still i.i.d. uniform random variables), one can proceed with estimating the common mean and covariance functions of $(Y^j)_{t\in[0,T]}$ and reconstructing the functions under the normality assumption (see, for example, \cite{Jametal2000} and \cite{Yaoetal2005}). 

 Another situation which requires reconstructing the functional observation is the case of \emph{fragmented data}. 
Such data arises in situations where the curves $Y^j$ can not reasonably be observed regularly or randomly in the whole time domain $[0,T]$, but rather on a subinterval (or union thereof) $\Delta_j\subset[0,T]$. Each curve is observed, either discreetly or fully, on $\Delta_j$. Treated as fragments of a general function, the objective is then to reconstruct $Y^j$ on $[0,T]$. 
Proposals of curve reconstruction (\changes{in the setting described above}) include \cite{DelHal2016}, but also \cite{DelHal2013}, in the classification setting or \cite{Lie2013} and \cite{Goletal2014}, for prediction purposes. Similarly, reconstructing the covariance operator of $Y$ from fragmented data has been considered in \cite{DesPan2019} and \cite{Deletal2020}. \changes{Other proposals, building on basis expansions of the covariance operator include \cite{ZhangChen2022,Linetal2021}. Finally, \cite{LinWang2022} propose an example of application of such reconstruction for health data.}

There also exists several proposals for \emph{optimal} reconstruction of fragmented data. They include \cite{Kra2015}, \cite{LieRam2019} or, more recently, \cite{KneLie2020}. In the latter, the authors construct a linear operator $L$ which minimizes the mean squared error loss $\E[\{Y^j_s-[L((Y^j)_{t\in \Delta_j})]_s\}^2]$ at any $s\notin \Delta_j$. The estimation of $L$ is based on $\{(Y^j)_{t\in \Delta_j}: j=1,\dots,J\}$. Importantly, in all the references above, the reconstruction of $Y^j$ is only possible given the knowledge of the full dataset.

%%% Modelling assumption on $X$
%Importantly, in all these situations, the underlying modelling assumption is that the random functions are of the form
%$$Y_t(\omega)=S_t(\omega)+X_t(\omega),$$
%for $t\in[0,T]$, where $(X)_{t\in[0,T]}$ is a stochastic perturbation and $(S)_{t\in[0,T]}$ a random slowly-varying, smooth function. The intrinsic implication in using such a model is that the relevant features of $Y$ are encoded in the behavior of $S$, while $X$ plays the role of a nuisance. 
%Very classically, it is assumed that the measurements $Y_{ij}=S_{ij}+X_{ij}$ are such that $X_{ij}=X^i_{t_{ij}}$ are i.i.d. centered random variables with a fixed variance. This last assumption is the crux that allows to disentangle $S$ from $X$, as in this situation, the time dependence in $Y$ comes entirely from $S$. Reconstructing the covariance structure of $S$ (or, equivalently, of $Y$) from fragmented data has been considered in \cite{DesPan2019} and \cite{Deletal2020}. 

\changes{The approach adopted in this paper breaks from the previous literature in that it aims at providing optimal reconstruction of missing fragments $(Y^j)_{s\notin\Delta_j},\ j=1\dots,J$ in the context of stochastic processes with memory. The reconstruction does not need to be linear in the observed fragments $(Y^j)_{t\in \Delta_j}$. Interestingly, when the covariance structure is known or can be parametrically estimated, the reconstruction can be done on a curve-by-curve basis.}
 %The approach adopted in this paper breaks from the previous literature, in that it aims at providing optimal reconstruction of missing fragments $(Y^j)_{s\notin\Delta_j}$ solely based on $(Y^j)_{t\in\Delta_j}$ in the context of stochastic processes with memory. 
%\textcolor{cyan}{this needs to be changed a bit to be milder -> ok now?} 
% This naturally allows to take into account the dependencies within a single function. 
 Throughout, we will assume that the observed set is a union of interval of the form
 \begin{equation}
\label{eq:partition-form}
\Delta = [t_L^1,t_U^1]\cup [t_L^2,t_U^2] \cup \ldots [t_L^n,t_U^n].
\end{equation}

 More precisely, the problem under consideration is to estimate the conditional expectation $\hat Y^j_s:=\E[Y_s^j|\mathcal{F}^j_\Delta]$, where $Y_s^j$ is a missing value of the function $Y^j$ and $\mathcal{F}_\Delta^j$ is the filtration generated by the observed parts of $Y^j$ itself. Note that $(\hat Y^j)_{s\in[0,T]}$ is indeed the optimal $L^2$ reconstruction of $Y^j$ in the sense that it minimizes 
 $$\textcolor{black}{\int_0^T\E[(Y^j_s-\hat Y^j_s)^2]ds.}$$

The contributions of this paper are threefold. In the first part, we provide an explicit expression for $\E[X_t|\mathcal{F}_\Delta]$, where $(X)_{t\in[0,T]}$ is a centered and separable Gaussian process that admits an integral representation. More precisely, under mild assumptions (discussed in Section 2), we make use of the Fredholm representation of $X$,
$$
X_t = \int_0^T K(t,s)dW_s,
$$
with $K(t,s)\in L^2([0,T]^2)$ a deterministic kernel and $W$ a Brownian motion. 
This in turn allows us to derive a Wiener integral representation of $\E[X_t|\mathcal{F}_\Delta]$, where the kernel is shown to satisfy certain integral equations. Importantly, in such setting, the curve reconstruction can be conducted on a curve-by-curve basis and presents a straightforward empirical expression.

In the second part, we provide explicit expressions for $\E[Y_s^j|\mathcal{F}^j_\Delta]$ under the assumption that $Y^j_t=f(t,X^j_t)$, $t\in[0,T]$, is some bijective transformation of a Gaussian process $(X^j)_{t\in[0,T]}$. Also in this situation, we show that, once the integral representation of $X^j$ is known, the regular conditional law of $Y^j$ can be computed by solving certain integral equations. 
Note that processes of the type $Y_t = f(t,X_t)$ form a large and flexible class of models. Indeed, it is known (see, e.g. \cite{viitasaari-ilmonen2020}) that the processes in this class can have arbitrary one-dimensional marginal distributions and approximate arbitrarily well any covariance structure. 
\changes{In comparison to \cite{KneLie2020}, the $L^2$-optimal reconstruction for such a general $f$ need not be linear in the observed values (see Theorem \ref{thm:prediction-FDA}). 
On the other hand, if $f$ is indeed linear, then the underlying process is Gaussian and it is well-known that, in this case, the optimal reconstruction is linear.}
%\textcolor{cyan}{somewhere here or in the next paragraph comparison to \cite{KneLie2020}. Something like: In comparison to \cite{KneLie2020}, the $L^2$-optimal reconstruction for such a general $f$ is not linear in the observed values, but rather a complicated functional of them, cf. Theorem \ref{thm:prediction-FDA}. On the other hand, if $f$ is linear, then the underlying process is Gaussian and it is well-known that in this case the optimal reconstruction is linear.}

Our third contribution is to provide a method to estimate both the bijective transformation and the solution to the integral equations, allowing us to estimate optimal $L^2$-predictors with minimal assumptions. 
\changes{Interestingly, if $f$ is known (as is the case, for example, in the first part of this paper) and if the covariance is \emph{simple}, then the reconstruction can be conducted separately for each functional observation provided. The covariance structure $R(t,s)$ is deemed \emph{simple} if it is either known or belongs to a parametric model (e.g. fractional Brownian motions) for which a single curve allows for estimation.}
%\textcolor{cyan}{here again we are not precise: we still need the covariance and to estimate that the whole data. Though in the parametric case like fBm, then only one curve is again enough.} 
This highlights the disentanglement between, on one side, the estimation of $f$ and $R$, which requires the whole dataset, and, on the other side, the reconstruction of $(X^j)_{t\notin \Delta_j}$ on the sole basis of $(X^j)_{t\in \Delta_j}$. 
We illustrate the estimation procedure for several Gaussian processes which include fractional Brownian motions. These processes have been studied intensively during the last decades, see, for example~\cite{Mishura-2008}.

The rest of the paper is organized as follows. 
In Section \ref{sec:Gaussian-prediction}, we provide optimal predictors for Gaussian processes. 
Throughout Section \ref{sec:Gaussian-prediction}, we assume that the underlying integral representation structure of the Gaussian process is known. In Section \ref{sec:FDA-prediction}, we drop the assumption of Gaussianity and known related integral structures. 
In particular, we explain how our approach can be used to approximate the optimal predictor by using the observed data directly and with posing minimal assumptions. 
\changes{We close Section \ref{sec:FDA-prediction} by providing convergence rates of the proposed estimators, under an additional regularity assumption on $f$.} 
We end the paper with a numerical study, provided in Section \ref{sec:simulations}, \changes{where we analyze both simulated and real data settings.}
The appendix to this paper is in two parts. In Appendix \ref{sec:appendix-Gaussian}, we recall some preliminaries on Gaussian analysis and provide some necessary results that guarantee the existence of solutions to our integral equations. Finally, all the proofs are contained in Appendix \ref{sec:proofs}.

%%%%%%%%%%%%%%%%%%%%%%%%%%%%%%%%%%%%%%%%%%
\section{Bridge prediction laws for partially observed Gaussian processes}
\label{sec:Gaussian-prediction}
%%%%%%%%%%%%%%%%%%%%%%%%%%%%%%%%%%%%%%%%%%

Let $X=(X_t)_{t\in [0,T]}$ be a centered, i.e. be such that $\E[X_t]=0$ for all $t\in [0,T]$, and separable Gaussian process on a probability space $(\Omega,\mathcal{F}(X),\mathbb{P})$. Furthermore, we assume throughout that the variance of $X$ is uniformly bounded, i.e.
\begin{equation}
\label{eq:bounded-var}
\sup_{t\in[0,T]}\E[X_t^2] < \infty.
\end{equation}
We stress that assuming uniformly bounded variance is a very mild condition. Most reasonable examples indeed fulfill~\eqref{eq:bounded-var}. As a particular example, continuity of $X$ (on the compact interval $[0,T]$, hence uniform continuity) is sufficient to guarantee both separability and \eqref{eq:bounded-var}.

If a centered and separable Gaussian process $X$ has an integrable variance function, i.e.
\begin{equation}
\label{eq:integrable-variance}
\int_0^T \E[X_t^2] dt < \infty,
\end{equation}
then there exists (for details, we refer to Appendix \ref{sec:appendix-Gaussian} and references therein) a standard Brownian motion $W$ and a deterministic kernel $K \in L^2\left([0,T]^2\right)$ such that 
\begin{equation}
\label{eq:fredholm}
X_t = \int_0^T K(t,s)dW_s
\end{equation}
holds in law. Note that~\eqref{eq:integrable-variance} trivially holds under~\eqref{eq:bounded-var}.
Rephrasing, under~\eqref{eq:bounded-var}, we assume that $X$ belongs to the separable Hilbert space $L^2([0,T])$ almost surely. %$=L^2(\Omega,\mathcal{F}(X),\mathbb{P})$ %L^2(\Omega,\sigma(X),\mathbb{P})$ 

\begin{rem}
The integral in \eqref{eq:fredholm} stands for the Wiener integral with respect to the Brownian motion. In general, Wiener integrals 
$$
\int_0^T f(s) dW_s
$$
can be defined for any function $f \in L^2([0,T])$. A more complete exposure on the topic is available in Appendix \ref{sec:appendix-Gaussian}.
\end{rem}
For $t\in[0,T]$, let $H^X_t$ be the first chaos of $X$, that is the closure (in $L^2(\Omega)$) of the linear space spanned by $X_s, {s\in [0,t]}$ (see also Definition \ref{def:first-chaos}).
We introduce the following assumption that stands throughout the article.
\begin{ass}
\label{assu:standing}
The centered and separable Gaussian process $X$ satisfies \eqref{eq:bounded-var}. Furthermore, there exist a kernel $K$ and a Brownian motion $W$ such that \eqref{eq:fredholm} holds exactly, and  $H^X_T = H^W_T$.
\end{ass}
Assumption \ref{assu:standing} is a natural assumption and certainly not very restrictive. 
Indeed, whenever \eqref{eq:bounded-var} is satisfied, we obtain that \eqref{eq:fredholm} holds in law. 
Assumption \ref{assu:standing} therefore merely states that the underlying Brownian motion drives the process $X$ directly and \eqref{eq:fredholm} holds as an equality instead of only as a representation in law. 
The additional condition $H^X_T = H^W_T$ is also natural and not restrictive. 
%Indeed, this simply means that, when the whole interval $[0,T]$ is concerned, the driving Brownian motion carries the same amount of randomness as $X$ itself. 
%In view of Karhunen-Lo\`eve expansion, this is essentially true as long as $X$ is not finite dimensional -- a case that is not particularly interesting for our purposes. % \textcolor{black}{details?}. 
%We also remark that, in many interesting cases, the stronger relation $H^X_t=H^W_t$ holds for all $t\in[0,T]$. This is the case in all the examples considered in Section \ref{sec:simulations}. 
\changes{Indeed, this simply means that, when the whole interval $[0,T]$ is considered, the process $X$ shares the same amount of randomness as the driving Brownian motion $W$. 
More precisely, write the Kosambi-Karhunen-Lo\`eve expansion of  $W$ as
$$
%X_t = \sum_{k=1}^\infty \phi_k(t)\xi_k,\qquad\textrm{ and }\qquad 
W_t = \sum_{k=1}^\infty \sqrt{\lambda_k} \psi_k(t)\xi_k, \textrm{ with }\xi_k=\int_0^T\frac{1}{\sqrt{\lambda_k}}W_t\psi_k(t)dt
$$
and where $\xi_k \sim N(0,1)$, with $\lambda_1\geq \lambda_2\geq\cdots>0$ and $\{\psi_k:k\geq 1\}$ is a deterministic orthonormal basis of $L^2(0,T)$ which diagonalises the covariance operator of $W_t$. 
%It follows that the sequence $\xi_k,k\geq 1$ carries the same information as $W_t,0\leq t\leq T$.
A necessary and sufficient condition for $H^X_T = H^W_T$ to hold is for $X_t$ to be of the form
$$
X_t = \sum_{k=1}^\infty c_k\phi_k(t)\xi_k,
$$
where the random variables $\xi_k$ are taken from the expansion of $W$ and $\{\phi_k:k\geq 1\}$ is another orthonormal basis of $L^2(0,T)$ and $c_k$ is a deterministic sequence of strictly positive constants.
Note that, in the latter expansion, the condition $c_k>0$ $\forall k$ imposes that $X$ is not finite dimensional. 
On the other hand, note also that the path properties of $X$ are not related to the coefficients $\xi_k$ but rather to the functions $\phi_k$ (the eigenfunctions of the covariance operator of $X$). 
Thus, Assumption \ref{assu:standing} is satisfied also for many processes $X$, e.g. with smooth sample paths. 
An example of a Gaussian process $X$ satisfying Assumption \ref{assu:standing} with smooth sample paths is the $n$-folded fractional Brownian motion, studied e.g. in \cite{Sottinen-Viitasaari2018}.
Other explicit examples of processes satisfying $H_T^X = H_T^W$ but with different sample path regularity are provided in Section \ref{sec:simulations}. 
We also remark that, in many interesting cases, the stronger relation $H^X_t=H^W_t$ holds for all $t\in[0,T]$. This is the case in all the examples considered in Section \ref{sec:simulations} and e.g. in the case of the $n$-folded fractional Brownian motion. 
Finally, we also point out that the condition $H_T^X=H_T^W$ is dropped in the discrete approach detailed later in Theorem \ref{thm:discrete}.}

An interesting and widely studied subclass of processes satisfying Assumption~\ref{assu:standing} are called Volterra Gaussian processes. They admit a representation
\begin{equation}
\label{eq:volterra}
X_t = \int_0^t K(t,s)dW_s.
\end{equation}
The expression \eqref{eq:fredholm} is called the ``Fredholm representation of $X$'', stemming from the fact that $X_t$ can only be computed with the knowledge of the kernel $K(t,s)$ for all $s\in[0,T]$. Similarly, \eqref{eq:volterra} is coined the ``Volterra representation of $X$'' and its kernel $K(t,s)$ is of Volterra-type, that is, is a Fredholm kernel such that $K(t,s)=0$ for $s>t$. 
\begin{exa}
Let $X$ be an $H$-self-similar Gaussian process, i.e. $X_{at}\stackrel{\mathcal{L}}{=}a^H X_t$ for all $a\geq 0$ and all $t\geq 0$. Then $H^X_t = H^W_t$ for all $t\in[0,T]$ if and only if $X$ has a Volterra representation \eqref{eq:volterra}. The latter holds whenever $X$ is purely non-deterministic. That is, when $\bigcap_{t>0} H_t^X$ is trivial, i.e. spanned only by constants. For details and for the proof of these facts, we refer to \cite{Yazigi-2015}.%Roughly speaking, this means that $X$ is non-degenerate.

%Moreover, this is the case whenever $X$ is purely non-deterministic meaning that $\bigcap_{t>0} H_t^X$ is trivial (meaning it is spanned only by the constants). Roughly speaking, this means that $X$ is non-degenerate. For details and for the proof of these facts, we refer to \cite{Yazigi-2015}.
\end{exa}
We are interested in the prediction law $X|\mathcal{F}_{\Delta}$, where 
\begin{equation}
\label{eq:partition-form2}
\Delta = [t_L^1,t_U^1]\cup [t_L^2,t_U^2] \cup \ldots \cup [t_L^n,t_U^n]
\end{equation}
for some $0\leq t_L^1<t_U^1<t_L^2<t_U^2 <\ldots t_L^n < t_U^n \leq T$ and 
$\mathcal{F}_{\Delta}$ is the $\sigma$-algebra generated by $X$ on $\Delta$, i.e.
\begin{equation*}
\mathcal{F}_{\Delta} = \sigma(X_t : t \in \Delta).
\end{equation*}
Setting $t_U^0=0$ and $t_L^{n+1}=T$, we aim at reconstructing the missing values on each missing subinterval $(t_U^k,t_L^{k+1}),k=0,\ldots,n$. From a practical point of view, this means that we observe $X_s$ for $s\in \Delta$ only, and our aim is to predict the missing values at $s \in [0,T]\setminus\Delta$. 

The following result provides the best predictor in the $L^2(\Omega)$-sense. 
\changes{Note that, in this particular Gaussian setting, the best predictor is a linear one. 
As such, the following theorem corresponds to the results of \cite{KneLie2020} in the Gaussian case, though the formulation adopted here is via Fredholm representation and through its kernel $K$. 
Theorem~\ref{thm:Gaussian-fredholm-prediction} (together with the other results in this section) paves the way for the more general case, presented in Section \ref{sec:FDA-prediction}, in which linearity of the optimal reconstruction does not hold true.}
\begin{thm}
\label{thm:Gaussian-fredholm-prediction}
Let $X$ be a stochastic process satisfying Assumption \ref{assu:standing} and let $\mathcal{F}_\Delta=\sigma(X_t:t\in\Delta)$. Then for each $t\in [0,T]\setminus \Delta$ we have
\begin{equation}
\label{eq:prediction-fredholm}
\hat{X}_t = \E\left[X_t | \mathcal{F}_\Delta\right] = \int_0^T f_t(s)dW_s, 
\end{equation}
where $f_t \in L^2 ([0,T])$ is the solution of the (system of) Fredholm integral equations
\begin{equation}
\label{eq:fredholm-system}
\int_0^T f_t(s)K(u_k,s)ds = \int_0^T K(t,s)K(u_k,s)ds, %\quad t_L^k\textcolor{black}{\leq}u_k<t_U^k
\end{equation}
for $k=1,\dots,n$ and for all $t_L^k\leq u_k<t_U^k$, that satisfies
$$\int_0^T f_t(s)g_\xi(s)ds = 0\textrm{, for all } g_\xi\in L^2([0,T]) \textrm{ such that } \int_0^T g_\xi(s)K(u_k,s)ds = 0.$$
\end{thm}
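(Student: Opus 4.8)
\emph{Proof strategy.} The plan is to reduce the computation of $\E[X_t\mid\mathcal{F}_\Delta]$ to an orthogonal projection in $L^2([0,T])$, transported there through the Wiener isometry, and then to recognise \eqref{eq:fredholm-system} together with the stated orthogonality condition as the normal equations that characterise this projection. The first observation is the standard fact that, for a centered Gaussian process, conditioning coincides with linear projection. Writing $H^X_\Delta$ for the closure in $L^2(\Omega)$ of the linear span of $\{X_u:u\in\Delta\}$ and $\hat X_t$ for the $L^2(\Omega)$-orthogonal projection of $X_t$ onto $H^X_\Delta$, one checks that $\hat X_t$ is $\mathcal{F}_\Delta$-measurable, being an $L^2$-limit and hence, along a subsequence, an almost sure limit of linear combinations of the observed values $X_u$, while $X_t-\hat X_t$ lies in the Gaussian Hilbert space $H^X_T$ and is orthogonal to every $X_u$, $u\in\Delta$; by joint Gaussianity it is therefore independent of the family $(X_u)_{u\in\Delta}$, hence of $\mathcal{F}_\Delta$. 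Consequently $\E[X_t-\hat X_t\mid\mathcal{F}_\Delta]=\E[X_t-\hat X_t]=0$, so that $\E[X_t\mid\mathcal{F}_\Delta]=\hat X_t$.

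Next I would transport the problem to $L^2([0,T])$. By the Wiener isometry, $\mathcal{I}\colon g\mapsto\int_0^T g(s)\,dW_s$ is an isometric isomorphism from $L^2([0,T])$ onto $H^W_T$, and Assumption \ref{assu:standing} gives $H^W_T=H^X_T\supseteq H^X_\Delta$, so that $\hat X_t\in H^W_T$ and $X_u=\mathcal{I}(K(u,\cdot))$ for every $u$. Since an isometric isomorphism maps closed subspaces onto closed subspaces and intertwines the associated orthogonal projections, $\mathcal{K}_\Delta:=\mathcal{I}^{-1}(H^X_\Delta)$ is the closure in $L^2([0,T])$ of the linear span of $\{K(u,\cdot):u\in\Delta\}$, and $f_t:=\mathcal{I}^{-1}(\hat X_t)$ is the orthogonal projection $P_{\mathcal{K}_\Delta}K(t,\cdot)$. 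This already yields the representation \eqref{eq:prediction-fredholm}, $\hat X_t=\int_0^T f_t(s)\,dW_s$, with existence and uniqueness of $f_t\in L^2([0,T])$ furnished by the Hilbert space projection theorem.

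It then remains to match the two displayed conditions with the characterisation of $f_t=P_{\mathcal{K}_\Delta}K(t,\cdot)$, namely $f_t\in\mathcal{K}_\Delta$ together with $K(t,\cdot)-f_t\perp\mathcal{K}_\Delta$. For the orthogonality, bilinearity and continuity of $\langle\cdot,\cdot\rangle$ show that $K(t,\cdot)-f_t\perp\mathcal{K}_\Delta$ is equivalent to $\langle K(t,\cdot)-f_t,K(u,\cdot)\rangle=0$ for all $u\in\Delta$, which is precisely \eqref{eq:fredholm-system}; restricting $u$ to the half-open intervals $[t^k_L,t^k_U)$ appearing in the statement is legitimate because $u\mapsto K(u,\cdot)$ is $L^2$-continuous, which in turn follows from separability of $X$ and \eqref{eq:bounded-var}. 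For the membership, since $\mathcal{K}_\Delta$ is a closed subspace one has $\mathcal{K}_\Delta=(\mathcal{K}_\Delta^\perp)^\perp$, so $f_t\in\mathcal{K}_\Delta$ if and only if $\langle f_t,g_\xi\rangle=0$ for every $g_\xi\in L^2([0,T])$ with $\langle g_\xi,K(u_k,\cdot)\rangle=0$ for all the relevant $u_k$ — which is exactly the last condition in the statement. Combining the two descriptions gives the system claimed in the theorem.

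Finally, on where the work lies: there is no hard analytic estimate, the argument being an assembly of Hilbert space geometry and elementary Gaussian facts, but the genuinely load-bearing hypothesis is $H^X_T=H^W_T$ in Assumption \ref{assu:standing}. It is exactly this that guarantees that $\hat X_t$, a priori only known to belong to $H^X_\Delta$, in fact belongs to $H^W_T$ and hence admits a Wiener integral representation; without it there need be no kernel $f_t$ at all. The remaining points demanding care are the $\mathcal{F}_\Delta$-measurability of the projection and the density argument allowing the spanning family to be indexed by the half-open intervals rather than by all of $\Delta$.
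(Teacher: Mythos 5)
Your proof is correct and follows essentially the same route as the paper's: both identify $\E[X_t\mid\mathcal{F}_\Delta]$ with the orthogonal projection of $X_t$ onto the Gaussian subspace generated by $X$ on $\Delta$ and transport it through the Wiener isometry so that \eqref{eq:fredholm-system} and the membership condition become the normal equations for the projection of $K(t,\cdot)$ onto $\overline{\mathrm{span}}\{K(u,\cdot):u\in\Delta\}$, your version merely being more explicit about why the second condition encodes $f_t\in(\mathcal{K}_\Delta^\perp)^\perp=\mathcal{K}_\Delta$. The one loose end is your claim that separability and \eqref{eq:bounded-var} yield $L^2$-continuity of $u\mapsto K(u,\cdot)$ — they do not, since $\|K(u,\cdot)-K(v,\cdot)\|_{L^2}^2=\E[(X_u-X_v)^2]$ and mean-square continuity is a genuinely additional property — but this only affects the cosmetic discrepancy between the half-open intervals in the statement and the closed intervals one actually projects onto, a point the paper's own proof glosses over in exactly the same way.
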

\begin{rem}
\label{rem:uniqueness}
In general, integral equations of the first kind such as (\ref{eq:fredholm-system}) are ill-posed problems and uniqueness of the solution is not guaranteed. 
Indeed, from $H_T^X = H_T^W$ it follows that each $\xi \in H_T^X$ can be represented as $\xi = \int_0^T g_\xi(s)dW_s$ for some $g_\xi\in L^2([0,T])$. Consequently, we may choose any $\xi = \int_0^T g_\xi(s)dW_s \in H_T^X$ orthogonal to $\mathcal{F}_\Delta$, and, since then $\int_0^T g_\xi(s)K(u_k,s)ds = 0$ for $t_L^k\leq u_k<t_U^k$, we observe that $f_t + g_\xi$ solves \eqref{eq:fredholm-system} whenever $f_t$ does. As $g_\xi$ can, in the general case, be chosen rather arbitrarily, the solution to \eqref{eq:fredholm-system} is clearly not unique. 
On the other hand, there is exactly one solution $f_t$ of \eqref{eq:fredholm-system} such that \eqref{eq:prediction-fredholm} holds (see the proof of Theorem~\ref{thm:Gaussian-fredholm-prediction} in Appendix~\ref{sec:proofs}). Indeed, existence and uniqueness of the conditional expectation allows to fix a unique representative within the set of solutions of \eqref{eq:fredholm-system}.
\end{rem}
\begin{rem}
The conditional mean in \eqref{eq:prediction-fredholm} is expressed as a Wiener integral with respect to the driving Brownian motion $W$. 
Such an expression is useful when one is interested in theoretical properties of $\hat{X}$ or for simulation purposes. 
In practice, however, the driving Brownian motion is not observable on $[0,T]$ and thus one needs to transform \eqref{eq:prediction-fredholm} into something computable from the observations $X_s,s\in \Delta$, directly. The approximation of $\hat{X}_t$ in terms of the process $X$ itself is discussed in Section \ref{sec:FDA-prediction}.
\end{rem}
The result below provides the regular law of $X$, conditional to $\mathcal{F}_\Delta$.  
It will prove useful when studying non-Gaussian processes in Section \ref{sec:FDA-prediction}.
\begin{thm}
\label{thm:Gaussian-regular-law}
Let $X$ be a stochastic process satisfying Assumption \ref{assu:standing} and let $\mathcal{F}_\Delta=\sigma(X_t:t\in\Delta)$. Then, the regular conditional law $X|\mathcal{F}_\Delta$ is Gaussian with random mean given by \eqref{eq:prediction-fredholm} and deterministic covariance given, for $t,s\in [0,T]\setminus \Delta$, by
\begin{equation}
\label{eq:rho}
\rho(t,s|\Delta) = \int_0^T \left[K(t,u)-f_t(u)\right]\left[K(s,u)-f_s(u)\right]du,
\end{equation}
where $f$ is the unique solution of~\eqref{eq:fredholm-system} in Theorem~\ref{thm:Gaussian-fredholm-prediction}.
\end{thm}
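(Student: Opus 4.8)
The plan is to work with the decomposition $X_t = \hat X_t + R_t$, where $\hat X_t = \E[X_t\mid\mathcal F_\Delta]$ is the predictor of Theorem~\ref{thm:Gaussian-fredholm-prediction} and $R_t := X_t - \hat X_t$ is the ``bridge residual'', and to show that the residual process $(R_t)_{t\in[0,T]\setminus\Delta}$ is independent of $\mathcal F_\Delta$. Granting this, the regular conditional law of $X$ given $\mathcal F_\Delta$ is obtained by a standard disintegration argument, and the covariance is identified by the Wiener isometry.

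First I would recall that, by Theorem~\ref{thm:Gaussian-fredholm-prediction}, $\hat X_t$ is the orthogonal projection in $L^2(\Omega)$ of $X_t$ onto the closed linear span $\mathcal H_\Delta := \overline{\operatorname{span}}\{X_s : s\in\Delta\}$, so that $R_t \perp \mathcal H_\Delta$ for every $t\in[0,T]\setminus\Delta$. Since $X$ is Gaussian and every $R_t$ as well as every element of $\mathcal H_\Delta$ is a (limit of) linear functional(s) of $X$, hence of $W$ via \eqref{eq:fredholm}, the family $\{R_t : t\notin\Delta\}\cup\mathcal H_\Delta$ is jointly Gaussian; for jointly Gaussian families orthogonality is equivalent to independence, so $\{R_t : t\notin\Delta\}$ is independent of $\mathcal H_\Delta$, and therefore independent of $\mathcal F_\Delta = \sigma(\mathcal H_\Delta)$.

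Next I would use this independence, together with the $\mathcal F_\Delta$-measurability of $\hat X$, to identify the regular conditional law. Because $X$ takes values in the Polish space $L^2([0,T])$ (see \eqref{eq:integrable-variance} and the discussion after it), a regular conditional distribution $\mathbb P(X\in\cdot\mid\mathcal F_\Delta)(\omega)$ exists. Writing $X = \hat X + R$ with $\hat X$ measurable and $R$ independent of $\mathcal F_\Delta$, the conditional law at $\omega$ is the law of the deterministic shift $\hat X(\omega) + R$, i.e. the Gaussian law with random mean $\hat X(\omega)$, given by \eqref{eq:prediction-fredholm}, and covariance equal to that of the centered Gaussian process $R$, which is deterministic. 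To compute it, I would use linearity of the Wiener integral and the representations $X_t = \int_0^T K(t,u)\,dW_u$ and $\hat X_t = \int_0^T f_t(u)\,dW_u$ to write $R_t = \int_0^T\bigl[K(t,u)-f_t(u)\bigr]\,dW_u$, so that the Wiener isometry gives
$$
\operatorname{Cov}(R_t,R_s) = \int_0^T \bigl[K(t,u)-f_t(u)\bigr]\bigl[K(s,u)-f_s(u)\bigr]\,du = \rho(t,s\mid\Delta),
$$
which is \eqref{eq:rho}; note that $f_t$ here is precisely the representative fixed in Theorem~\ref{thm:Gaussian-fredholm-prediction}, so the expression is well defined despite the non-uniqueness discussed in Remark~\ref{rem:uniqueness}.

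The main obstacle is the measure-theoretic bookkeeping rather than any hard estimate: one must carefully justify that the pathwise $L^2(\Omega)$-orthogonality of the projection upgrades to independence from the entire $\sigma$-algebra $\mathcal F_\Delta$ (this is exactly where the joint-Gaussianity of $\{R_t\}\cup\mathcal H_\Delta$ enters), and one must state the disintegration in the infinite-dimensional, separable Hilbert space setting with enough care that ``Gaussian with random mean and deterministic covariance'' is a legitimate description of the regular conditional law of the $L^2([0,T])$-valued random element $X$.
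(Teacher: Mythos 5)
Your proposal is correct and follows essentially the same route as the paper: the paper simply cites a known result (Janson, Chapter 9) for the fact that $X|\mathcal F_\Delta$ is Gaussian with random mean $\hat X_t$ and deterministic covariance $\E[(X_t-\hat X_t)(X_s-\hat X_s)]$, whereas you supply the standard proof of that fact (orthogonality of the residual, joint Gaussianity upgrading it to independence, and disintegration). The decisive computation --- writing $X_t-\hat X_t=\int_0^T\bigl[K(t,u)-f_t(u)\bigr]\,dW_u$ and applying the It\^o/Wiener isometry to obtain \eqref{eq:rho} --- is identical to the paper's.
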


The next theorem particularizes Theorem~\ref{thm:Gaussian-fredholm-prediction} to the case of Volterra Gaussian processes.

\begin{thm}
\label{thm:Gaussian-prediction}
Let $X$ be a Volterra process satisfying Assumption \ref{assu:standing} with representation \eqref{eq:volterra} and let $\mathcal{F}_\Delta=\sigma(X_t:t\in\Delta)$.
Then, for each $t\in [0,T]\setminus \Delta$ we have
\begin{equation}
\label{eq:prediction}
\hat{X}_t = \E\left[X_t | \mathcal{F}_\Delta\right] = \int_0^T f_t(s)dW_s, 
\end{equation}
where, for each $k=1,2,\ldots,n$ and $t_L^k\leq s\leq t_U^k$, the function $f_t \in L^2([0,T])$ is a solution of the recursive system of Volterra integral equations of the first kind given by, for $t_L^k\leq u_k\leq t_U^k$,
\begin{equation}
\label{eq:thm-volterra-eq}
\int_{t_L^k}^{u_k} [f_t(s)-K(t,s)]K(u_k,s)ds = \int_0^{t_L^k} [K(t,s)-f_t(s)]K(u_k,s)ds
\end{equation}
with the boundary conditions
\begin{equation}
\label{eq:thm-boundary-eq}
\int_{t_U^{k-1}}^{t_L^k}f_t(s) K(t_L^k,s)ds = \int_0^{t_L^k} K(t,s)K(t_L^k,s)ds - \int_0^{t_U^{k-1}} f_t(s)K(t_L^k,s)ds.
\end{equation}
\end{thm}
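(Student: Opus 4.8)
The plan is to obtain Theorem~\ref{thm:Gaussian-prediction} as a direct consequence of Theorem~\ref{thm:Gaussian-fredholm-prediction}, by exploiting the support property $K(u,s)=0$ for $s>u$ that characterises a Volterra kernel. Applying Theorem~\ref{thm:Gaussian-fredholm-prediction} to $X$, one has $\hat{X}_t=\E[X_t\mid\mathcal{F}_\Delta]=\int_0^T f_t(s)\,dW_s$, where $f_t\in L^2([0,T])$ satisfies, for each $k=1,\dots,n$ and each $u_k\in[t_L^k,t_U^k)$,
\[
\int_0^T \big[f_t(s)-K(t,s)\big]K(u_k,s)\,ds=0,
\]
together with the minimality (orthogonality) clause of that theorem. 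The first step is to note that, since $K(u_k,\cdot)$ vanishes on $(u_k,T]$, the upper integration limit $T$ may be replaced by $u_k$, giving $\int_0^{u_k}[f_t(s)-K(t,s)]K(u_k,s)\,ds=0$.

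The remaining steps are elementary splittings of the integration interval. Decomposing $[0,u_k]=[0,t_L^k]\cup[t_L^k,u_k]$ and transferring the first piece to the other side yields precisely \eqref{eq:thm-volterra-eq}. For the boundary relations, I would specialise the reduced identity to $u_k=t_L^k$, obtaining $\int_0^{t_L^k}[f_t(s)-K(t,s)]K(t_L^k,s)\,ds=0$, and then split $[0,t_L^k]=[0,t_U^{k-1}]\cup[t_U^{k-1},t_L^k]$ (with the convention $t_U^0=0$) and rearrange to reach \eqref{eq:thm-boundary-eq}. One small point: Theorem~\ref{thm:Gaussian-fredholm-prediction} is stated for $u_k\in[t_L^k,t_U^k)$, whereas we also use $u_k=t_L^k$ and $u_k=t_U^k$; this is harmless since $t_L^k,t_U^k\in\Delta$, so $X_{t_L^k}$ and $X_{t_U^k}$ are among the conditioning variables and the orthogonality $\langle K(t,\cdot)-f_t,\,K(u_k,\cdot)\rangle_{L^2([0,T])}=0$ behind \eqref{eq:fredholm-system} continues to hold for these values.

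It is worth recording that every manipulation above is reversible: granted the Volterra support property, the system \eqref{eq:thm-volterra-eq}--\eqref{eq:thm-boundary-eq}, as $u_k$ ranges over $[t_L^k,t_U^k]$ and $k$ over $1,\dots,n$, is just a rewriting of the Fredholm system \eqref{eq:fredholm-system}. Hence the minimality clause of Theorem~\ref{thm:Gaussian-fredholm-prediction} still selects the representative $f_t$ for which \eqref{eq:prediction} holds, and the rewriting displays the announced block-triangular (``recursive'') structure: the equations attached to the $k$-th block involve $f_t$ only through its restriction to $[0,t_U^k]$, so one can proceed block by block, the relation \eqref{eq:thm-boundary-eq} feeding in the contribution of the already reconstructed part on $[0,t_U^{k-1}]$ before \eqref{eq:thm-volterra-eq} is solved over $[t_L^k,t_U^k]$. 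I do not anticipate a real obstacle: the substantive content is entirely inherited from Theorem~\ref{thm:Gaussian-fredholm-prediction}, and the only points that warrant an explicit line are the endpoint inclusion noted above and the routine check that the interval decompositions lose nothing in passing from \eqref{eq:fredholm-system} to the Volterra form.
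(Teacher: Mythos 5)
Your proposal is correct and follows essentially the same route as the paper: both derive \eqref{eq:thm-volterra-eq}--\eqref{eq:thm-boundary-eq} from the Fredholm system \eqref{eq:fredholm-system} of Theorem~\ref{thm:Gaussian-fredholm-prediction} by truncating the integrals via the Volterra support property $K(u_k,s)=0$ for $s>u_k$ and splitting the integration interval at $t_U^{k-1}$ and $t_L^k$. If anything, your write-up is more careful than the paper's (which compresses these rearrangements into two displayed lines), and your endpoint remark about $u_k=t_U^k$ is consistent with the paper's own proof of Theorem~\ref{thm:Gaussian-fredholm-prediction}, which in fact works with the closed interval $u_k\in[t_L^k,t_U^k]$.
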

\begin{rem}
As in the case of Theorem \ref{thm:Gaussian-fredholm-prediction}, the solution of \eqref{eq:thm-volterra-eq}-\eqref{eq:thm-boundary-eq} is not unique. However, only one amongst them satisfies \eqref{eq:prediction}. It can be determined imposing the same condition as in Theorem~\ref{thm:Gaussian-fredholm-prediction}.
\end{rem}

The following corollary covers the case $n=2$, $t_L^1=0$, and $t_U^2=T$, i.e. the case where $\Delta=[0,t_U^1]\cup[t_L^2,T]$ is such that only one subinterval is missing in $[0,T]$. In that situation, it provides the unique kernel $f_t$ satisfying \eqref{eq:prediction} of Theorem~\ref{thm:Gaussian-prediction} under the hypothesis that $H^X_t = H^W_t$ for all $t\in[0,T]$. %In particular, the latter assumption is true under the settings of Theorem~\ref{thm:Gaussian-prediction}.

\begin{cor}
\label{cor:one-missing}
%Let $X$ be a Volterra process satisfying Assumption \ref{assu:standing} with representation \eqref{eq:volterra} and let $\mathcal{F}_\Delta=\sigma(X_t:t\in\Delta)$.
Let $X$ be a stochastic process satisfying Assumption \ref{assu:standing} and let $\mathcal{F}_\Delta=\sigma(X_t:t\in\Delta)$, with $\Delta = [0,t_U]\cup [t_L,T]$.
Suppose further that $H^X_t = H^W_t$ for all $t\in[0,T]$. Then for all $t\in[t_U,t_L]$ we have
\begin{equation}
\label{eq:one-missing}
\hat{X}_t = \E[X_t | \mathcal{F}_\Delta] = \int_0^{t_U} K(t,s)dW_s +c(t)\int_{t_U}^{t_L}K(t_L,s)dW_s+ \int_{t_L}^T g_t(s)dW_s,
\end{equation}
where $\textcolor{black}{c(t),t\in[t_U,t_L]}$ is given by
\begin{equation}
\label{eq:c}
c(t) = \frac{\int_{t_U}^{t}K(t,s)K(t_L,s)ds}{\int_{t_U}^{t_L}K^2(t_L,s)ds}
\end{equation}
and $g_t(s)$ is the solution to the Volterra integral equation of the first kind given by
\begin{equation}
\label{eq:int-eq}
\int_{t_L}^u g_t(s)K(u,s)ds = \int_{t_U}^{t_L}\left[K(t,s)-c(t)K(t_L,s)\right]K(u,s)ds\textcolor{black}{,\quad u\in[t_L,T]}.
\end{equation}
\end{cor}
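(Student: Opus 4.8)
The plan is to specialise Theorem~\ref{thm:Gaussian-prediction} to the configuration $n=2$, $(t_L^1,t_U^1,t_L^2,t_U^2)=(0,t_U,t_L,T)$, so that the single missing interval is $(t_U,t_L)$, and then to simplify the recursive system \eqref{eq:thm-volterra-eq}--\eqref{eq:thm-boundary-eq}, using the stronger hypothesis $H^X_t=H^W_t$ to single out, among its many solutions, the one that represents the conditional expectation. Note that $H^X_t=H^W_t$ for all $t$ forces $K(t,s)=0$ for $s>t$, so $X$ is automatically Volterra and $X_t=\int_0^tK(t,s)\,dW_s$.

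First I would treat the block $k=1$. Since $t_L^1=0$, equation \eqref{eq:thm-volterra-eq} reduces to $\int_0^T[K(t,s)-f_t(s)]K(u,s)\,ds=0$ for all $u\in[0,t_U]$; because $H^X_{t_U}=H^W_{t_U}$ this says that the Wiener integral of $K(t,\cdot)-f_t$ is orthogonal to every Wiener integral supported on $[0,t_U]$, whence $f_t(s)=K(t,s)$ for a.e.\ $s\in[0,t_U]$, and the $k=1$ instance of \eqref{eq:thm-boundary-eq} holds trivially. This produces the term $\int_0^{t_U}K(t,s)\,dW_s$ in \eqref{eq:one-missing}. Substituting $f_t=K(t,\cdot)$ on $[0,t_U]$, and using $K(t,s)=0$ for $s>t$ together with $t<t_L$, the $k=2$ instance of \eqref{eq:thm-boundary-eq} collapses to the single scalar identity $\int_{t_U}^{t_L}f_t(s)K(t_L,s)\,ds=\int_{t_U}^{t}K(t,s)K(t_L,s)\,ds$.

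The heart of the matter is to upgrade this scalar identity into $f_t(s)=c(t)K(t_L,s)$ on $(t_U,t_L)$, with $c(t)$ as in \eqref{eq:c}. I would do this through the projection viewpoint: by Gaussianity $\hat X_t$ is the orthogonal projection of $X_t$ onto $\mathcal L_\Delta:=\overline{\mathrm{span}}\{X_u:u\in\Delta\}$, and $H^X_{t_U}=H^W_{t_U}$ gives the orthogonal decomposition $\mathcal L_\Delta=H^W_{t_U}\oplus\mathcal G$ with $\mathcal G=\overline{\mathrm{span}}\{X_u-\Pi_{H^W_{t_U}}X_u:u\in[t_L,T]\}$. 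Splitting $X_t=\int_0^{t_U}K(t,s)\,dW_s+\int_{t_U}^{t}K(t,s)\,dW_s$ and noting the first summand lies in $H^W_{t_U}$, we get $\hat X_t=\int_0^{t_U}K(t,s)\,dW_s+\Pi_{\mathcal G}\big(\int_{t_U}^{t}K(t,s)\,dW_s\big)$. The crucial claim, which I expect to require $H^X_t=H^W_t$ for \emph{every} $t$, is that the only part of $\mathcal G$ whose Wiener representation is supported on the gap $(t_U,t_L)$ is the line spanned by $Z:=X_{t_L}-\Pi_{H^W_{t_U}}X_{t_L}=\int_{t_U}^{t_L}K(t_L,s)\,dW_s$; granting it, the gap-component of $\Pi_{\mathcal G}\big(\int_{t_U}^{t}K(t,s)\,dW_s\big)$ equals $\big(\langle \int_{t_U}^{t}K(t,s)\,dW_s,Z\rangle/\|Z\|^2\big)Z=c(t)Z$, which both matches and sharpens the scalar boundary identity. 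It then remains to read off $g_t:=f_t|_{[t_L,T]}$: with $f_t=K(t,\cdot)$ on $[0,t_U]$, $f_t=c(t)K(t_L,\cdot)$ on $(t_U,t_L)$, and $K(t,s)=0$ for $s\ge t_L$, the $k=2$ instance of \eqref{eq:thm-volterra-eq} for $t_L\le u\le T$ becomes exactly \eqref{eq:int-eq}; existence of $g_t\in L^2([t_L,T])$ follows from the results recalled in Appendix~\ref{sec:appendix-Gaussian}, reassembling the three pieces of $f_t$ yields \eqref{eq:one-missing}, and uniqueness of this representative is inherited, as in Theorem~\ref{thm:Gaussian-fredholm-prediction}, from the uniqueness of the conditional expectation.

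The step I anticipate as the real obstacle is the ``crucial claim'' above: showing that, when $H^X_t=H^W_t$ holds for all $t$, the observation of $X$ on $[t_L,T]$ pins down the driving noise restricted to the gap only through the single variable $Z$ --- equivalently, that $\mathcal G$ is orthogonal to every Wiener integral supported on $(t_U,t_L)$ that is orthogonal to $Z$. Everything else is bookkeeping with the Volterra representation together with the existence theory for first-kind integral equations already developed in the appendix.
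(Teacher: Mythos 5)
Your route is the same as the paper's: specialize Theorem~\ref{thm:Gaussian-prediction} to $n=2$ with $(0,t_U,t_L,T)$, use $H^X_{t_U}=H^W_{t_U}$ to get the unique solution $f_t=K(t,\cdot)$ on $[0,t_U]$ of the first block (the paper invokes Corollary~\ref{cor:unique-solution-CM} for exactly this), collapse the boundary condition \eqref{eq:thm-boundary-eq} to the scalar identity $\int_{t_U}^{t_L}f_t(s)K(t_L,s)\,ds=\int_{t_U}^{t}K(t,s)K(t_L,s)\,ds$, pin the gap part of $f_t$ to $c(t)K(t_L,\cdot)$, and then read \eqref{eq:int-eq} off the $k=2$ instance of \eqref{eq:thm-volterra-eq}. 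All of that bookkeeping is correct and matches the paper. The difference is entirely in the one step you flag and do not prove: why $f_t=c(t)K(t_L,\cdot)$ on $(t_U,t_L)$ rather than some other function with the same inner product against $K(t_L,\cdot)$. The paper disposes of this by writing $f_t=\tilde f_t+c(t)K(t_L,\cdot)$ and asserting that $\tilde f_t\not\equiv 0$ on the gap would make $\hat X_t$ fail to be $\mathcal{F}_\Delta$-measurable; your projection decomposition $\mathcal{L}_\Delta=H^W_{t_U}\oplus\mathcal{G}$ is just a cleaner phrasing of the same measurability requirement. So you have correctly located the crux, but left it open, which is a genuine gap in the proposal.

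Moreover, the first version of your ``crucial claim'' — that the only elements of $\mathcal{G}$ whose Wiener representation is supported on the gap are multiples of $Z$ — would not suffice even if granted. The element you need to control is $\Pi_{\mathcal{G}}V$ with $V=\int_{t_U}^{t}K(t,s)\,dW_s$, and $\Pi_{\mathcal{G}}V$ is \emph{not} gap-supported (it carries the $g_t$ piece on $[t_L,T]$); its gap component is the restriction of its kernel, which a priori lies in the closed span of $\{K(u,\cdot)\1_{(t_U,t_L)}:u\ge t_L\}$, a space that is generally larger than $\mathrm{span}\{K(t_L,\cdot)\1_{(t_U,t_L)}\}$. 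Knowing which gap-supported variables belong to $\mathcal{G}$ says nothing about the gap components of non-gap-supported members of $\mathcal{G}$. What you actually need is the stronger statement you give only in your last paragraph: that $\mathcal{G}$ is orthogonal to every gap-supported Wiener integral orthogonal to $Z$, equivalently that every $\eta\in\mathcal{G}$ has gap component proportional to $K(t_L,\cdot)\1_{(t_U,t_L)}$; combined with $\langle \Pi_{\mathcal{G}}V,Z\rangle=\langle V,Z\rangle=c(t)\|Z\|^2$ this does yield the corollary. That stronger statement is a nontrivial structural property of the kernel (it is immediate for Brownian motion, where $K(u,\cdot)\1_{(t_U,t_L)}=\1_{(t_U,t_L)}$ for all $u\ge t_L$, but not at all obvious in general), and it is precisely what the paper's one-line measurability remark is implicitly assuming. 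To complete your proof you must either establish this orthogonality property or reproduce the paper's measurability argument in a form that addresses the objection above.
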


The expression~\eqref{eq:one-missing} decomposes the dependence of $\hat X_t$ on the process $X$ in three sub-intervals. Note, however, that the assumption \textcolor{black}{$H^X_s = H^W_s$ for all $s\in[0,T]$}, guarantees
$$
\int_0^{t_U} K(t,s)dW_s +c(t)\int_{t_U}^{t_L}K(t_L,s)dW_s = \int_0^{t_U} K(t,s)-c(t)K(t_L,s)dW_s + c(t)X_{t_L},
$$
so that \eqref{eq:one-missing} rewrites
$$
\hat{X}_t = \E[X_t | \mathcal{F}_\Delta] = \Big(\int_0^{t_U} K(t,s)-c(t)K(t_L,s)dW_s\Big) + \Big(c(t)X_{t_L} + \int_{t_L}^T g_t(s)dW_s\Big).
$$
In particular, $\int_0^{t_U} K(t,s)-c(t)K(t_L,s)dW_s \in H_{t_U}^X$ is measurable as well. This removes any explicit integration within the missing interval $[t_U,t_L]$. 
Finally, note that computing $\hat X_t$ remains difficult as, indeed, the quantity $\int_{t_L}^T g_t(s)dW_s$ can not easily be expressed as a function of the observable process $X$. We address this remaining difficulty in the empirical case using a discretization argument; see Section~\ref{sec:FDA-prediction}.

% This makes explicit the dependence of $\hat X_t$ on $(X)_{t\in [0,t_U]}$ and $(X)_{t\in[t_L,T]}$. 
%\begin{rem}
%Note that since $H^X_t = H^W_t$ for all $t\in[0,T]$, we may write
%$$
%\int_0^{t_U} K(t,s)dW_s +c(t)\int_{t_U}^{t_L}K(t_L,s)dW_s = \int_0^{t_U} K(t,s)-c(t)K(t_L,s)dW_s + c(t)X_{t_L},
%$$
%where now $\int_0^{t_U} K(t,s)-c(t)K(t_L,s)dW_s \in H_{t_U}^X$ is measurable as well. 
%\end{rem}

The process $\hat X_t$ detailed in Corollary~\ref{cor:one-missing} is a Gaussian bridge, since $\hat X_{t_U}=X_{t_U}$ and $\hat X_{t_L}=X_{t_L}$. 
Indeed, this can be seen by setting $t=t_U$ in \eqref{eq:one-missing}, which in turn gives $c(t_U)=0$ and $g_{t_U}$, a solution to 
$
\int_{t_L}^u g_{t_U}(s)K(u,s)ds = 0,
$
leading to $g_{t_U}\equiv 0$. This shows 
$$\hat X_{t_U}=\int_0^{t_U}K(t_U,s)dW_s = X_{t_U}.$$
Similarly, for $t=t_L$, we again have
$
\int_{t_L}^u g_{t_L}(s)K(u,s)ds = 0,
$
giving $g_{t_L}\equiv 0$. Also, $c(t_L)=1$ and the first two terms give
$$
\hat X_{t_L}=\int_0^{t_U} K(t_L,s)dW_s + \int_{t_U}^{t_L}K(t_L,s)dW_s  = X_{t_L}.
$$

\begin{rem}
Note that, in the general situation of Theorem~\ref{thm:Gaussian-fredholm-prediction}, the conditional expectation $\hat X_t=\mathbb{E}[X_t|\mathcal{F}_\Delta]$ is a (generalized) Gaussian bridge (see~\cite{Sottinen-Yazigi-2014}). 
Note also that, even in the setting of Theorem~\ref{thm:Gaussian-prediction} with $n\geq 3$, showing that $\hat X_{t_U^i}=X_{t_U^i}$, $i=1,\dots,n$ and $\hat X_{t_L^j}=X_{t_L^j}$, $j=1,\dots,n$ is not as straightforward (compared to the case $n=2$, that is, under Corollary~\ref{cor:one-missing}). 
The same argument as above holds for $t = t_L^n$ and $t = t_U^1$, though. This is not the case for $t = t_U^{j}$, $j=2,\dots,n$ (or $t=t_L^i$, $i=1,\dots,n-1$), which cannot be covered by simply plugging into \eqref{eq:thm-volterra-eq}. 
It can be showed that the only solution to \eqref{eq:thm-volterra-eq} and \eqref{eq:thm-boundary-eq} satisfying the unicity condition satisfies $f_{t_U^k}(s) = K(t_U^k,s)$ and $f_{t_L^k}(s) = K(t_L^k,s)$, for all $k=1,\dots, n$. This gives $\hat{X}_{t_U^k} = X_{t_U^k}$ and $\hat{X}_{t_L^k} = X_{t_L^k}$ as expected.

%The underlying reason is that there does not exist a solution to $f_{t_U^j}(s) = K(t_U^j,s)$ (for $s$ with positive Lebesgue measure) to \eqref{eq:proof-boundary-eq}-\eqref{eq:proof-volterra-eq}. Thus the only solution is $f_{t_U^k}(s) = K(t_U^k,s)$ giving $\hat{X}_{t_U^k} = X_{t_U^k}$ as expected. 
%
%However, note that the same reasoning does not work in the general case under framework of Theorem \ref{thm:Gaussian-prediction}. Indeed, while the case $t = t_L^k$ and $t = t_U^1$ can be treated by above formalism, the case $t = t_U^{k}$ cannot be covered by simply plugging into \eqref{eq:thm-volterra-eq}. The underlying reason for this is that there do not exist a solution $f_{t_U^k}(s) \neq K(t_U^k,s)$ (for $s$ with positive Lebesgue measure) to \eqref{eq:proof-boundary-eq}-\eqref{eq:proof-volterra-eq}. Thus the only solution is $f_{t_U^k}(s) = K(t_U^k,s)$ giving $\hat{X}_{t_U^k} = X_{t_U^k}$ as expected. 
\end{rem}

%Note that we obtain, as expected, a Gaussian bridge (see~\cite{Sottinen-Yazigi-2014}). This is clearly visible in the setting of Corollary \ref{cor:one-missing}, where we can formally set $t=t_U$ giving $c(t_U)=0$ and $g_{t_U}(s)$ a solution to 
%$$
%\int_{t_L}^u g_{t_U}(s)K(u,s)ds = 0
%$$
%leading to $g_{t_U}(s) \equiv 0$. This leaves only the term $\int_0^{t_U}K(t_U,s)dW_s = X_{t_U}$. Similarly, for $t=t_L$ we have $c_{t_L}=1$ and the first two terms give us
%$$
%\int_0^{t_U} K(t_L,s)dW_s + \int_{t_U}^{t_L}K(t_L,s)dW_s  = X_{t_L}.
%$$ 
%In this case we again have
%$$
%\int_{t_L}^u g_{t_L}(s)K(u,s)ds = 0
%$$
%giving $g_{t_L}\equiv 0$. 

The following corollary provides the future prediction laws by setting $n=1$ and $t_L^1=0$.
\begin{cor}
\label{cor:future-prediction}
Let $X$ be a stochastic process satisfying Assumption \ref{assu:standing} and let $\mathcal{F}_\Delta=\sigma(X_t:t\in\Delta)$, with $\Delta = [0,t_U]$. Suppose further that $H^X_t = H^W_t$ for all $t\in[0,T]$. Then for all $t>t_U$ we have
\begin{equation}
\label{eq:prediction-future}
\hat{X}_t = \E[X_t | \mathcal{F}_\Delta] = \int_0^{t_U} K(t,s)dW_s.
\end{equation}
\end{cor}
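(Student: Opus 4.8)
\noindent\emph{Proof proposal.} The plan is to argue directly rather than through the integral equations, exploiting that when $\Delta=[0,t_U]$ conditioning on $\mathcal{F}_\Delta$ amounts to conditioning on the driving Brownian motion up to time $t_U$, so that the ``future'' noise increment is orthogonal to it. First I would reduce the conditional expectation to a projection: since $X$ is centered Gaussian, $\hat X_t=\E[X_t\mid\mathcal{F}_\Delta]$ equals the orthogonal projection in $L^2(\Omega)$ of $X_t$ onto the closed linear span of $\{X_u:u\in[0,t_U]\}$, i.e.\ onto $H^X_{t_U}$. This is precisely the place where the strengthened hypothesis ``$H^X_t=H^W_t$ for all $t\in[0,T]$'' (as opposed to merely $H^X_T=H^W_T$ from Assumption~\ref{assu:standing}) enters: it yields $H^X_{t_U}=H^W_{t_U}$, so the projection may equivalently be taken onto $H^W_{t_U}$.

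Next I would describe $H^W_{t_U}$ concretely via the Wiener isometry. Writing $W_s=\int_0^T\1_{[0,s]}(r)\,dW_r$ and using that $\{\1_{[0,s]}:s\le t_U\}$ has dense linear span in $L^2([0,t_U])$, viewed as a subspace of $L^2([0,T])$, the isometry between $L^2([0,T])$ and $H^W_T$ shows that $H^W_{t_U}=\{\int_0^{t_U}h(s)\,dW_s:h\in L^2([0,t_U])\}$, and that any Wiener integral $\int_0^T h(s)\,dW_s$ with $h=0$ a.e.\ on $[0,t_U]$ is orthogonal to $H^W_{t_U}$.

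Finally I would split $X_t$ using the Volterra representation~\eqref{eq:volterra}: for $t>t_U$,
\[
X_t=\int_0^{t_U}K(t,s)\,dW_s+\int_{t_U}^{t}K(t,s)\,dW_s .
\]
The first summand belongs to $H^W_{t_U}=H^X_{t_U}$, while the second is the Wiener integral of $K(t,\cdot)\1_{(t_U,t]}$, a function vanishing on $[0,t_U]$, hence it is orthogonal to $H^X_{t_U}$. Projecting onto $H^X_{t_U}$ therefore annihilates the second term and fixes the first, giving $\hat X_t=\int_0^{t_U}K(t,s)\,dW_s$, which is~\eqref{eq:prediction-future}. One can double-check consistency with Theorem~\ref{thm:Gaussian-prediction}: with $n=1$, $t_L^1=0$ and $t_U^1=t_U$ the boundary condition~\eqref{eq:thm-boundary-eq} is vacuous and \eqref{eq:thm-volterra-eq} reduces to $\int_0^{u}[f_t(s)-K(t,s)]K(u,s)\,ds=0$ for all $u\le t_U$, solved by $f_t=K(t,\cdot)\1_{[0,t_U]}$, which is the representative singled out by the orthogonality condition that pins down~\eqref{eq:prediction}. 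The argument is essentially routine; the only point requiring care is the identification $H^X_{t_U}=H^W_{t_U}$, that is, noticing that it is the ``for all $t$'' form of the $H$-assumption, and not just the standing one, that makes the proof go through.
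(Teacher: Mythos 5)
Your proof is correct, but it takes a more direct route than the paper. The paper derives the corollary as a specialization of Theorem~\ref{thm:Gaussian-fredholm-prediction}: it writes down the integral equation $\int_0^u f_t(s)K(u,s)\,ds=\int_0^u K(t,s)K(u,s)\,ds$ for $u\le t_U$ and then invokes Corollary~\ref{cor:unique-solution-CM} (i.e.\ the invertibility of the operator $K$ from Theorem~\ref{thm:K-invertible}, applied on $[0,t_U]$ thanks to $H^X_{t_U}=H^W_{t_U}$) to conclude that the admissible solution is $f_t=K(t,\cdot)\1_{[0,t_U]}$. You instead bypass the integral-equation machinery entirely: you exhibit the orthogonal decomposition $X_t=\int_0^{t_U}K(t,s)\,dW_s+\int_{t_U}^{\cdot}K(t,s)\,dW_s$ and check membership in, respectively orthogonality to, $H^W_{t_U}=H^X_{t_U}$ by the It\^o isometry. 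This buys a shorter, self-contained argument that makes transparent where the hypothesis ``$H^X_t=H^W_t$ for all $t$'' is used (exactly to identify the projection target, as you say), at the cost of not illustrating how the general framework of Section~2 specializes. One cosmetic point: the corollary is stated under the Fredholm representation~\eqref{eq:fredholm}, not~\eqref{eq:volterra}; however, since $X_t\in H^X_t=H^W_t$ forces $K(t,\cdot)=0$ a.e.\ on $(t,T]$ by uniqueness of the Wiener-integral representation, the kernel is automatically of Volterra type under your standing hypothesis — and in any case your argument goes through verbatim if one splits the integral at $t_U$ and lets the second piece run to $T$. So this is not a gap, merely worth a sentence.
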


\begin{rem}
Corollary \ref{cor:future-prediction} can be viewed as a generalization of the results provided in \cite{Sottinen-Viitasaari-2017b}, where the authors studied the future prediction law of fractional Brownian motions. 
Contrary to the latter, we here rely on functional analytic argument of invertibility of bounded linear operators. We also note that, without the additional assumption $H^X_t = H^W_t$, it is not clear whether \eqref{eq:prediction-future} is measurable with respect to $\mathcal{F}_\Delta$. 
\end{rem}

\begin{exa}
\label{ex:Bm}
Let $W$ be a Brownian motion, i.e. $K(t,s) = \1_{s\leq t}$. By direct computations, for $t\in(t_U^{j-1},t_L^{j})$, the solution is given by
$$
f_t(s) = \textbf{1}_{s\leq t_U^{j-1}} + \frac{t-t_U^{j-1}}{t_L^j - t_U^{j-1}}\textbf{1}_{t_U^{j-1}<s<t_L^j}.
$$ 
Thus
$$
\hat{W}_t = W_{t_U^{j-1}} + \frac{t-t_U^{j-1}}{t_L^j - t_U^{j-1}}(W_{t_L^j}-W_{t_U^{j-1}}),
$$
i.e. the optimal predictor is linear within each missing interval.
\end{exa}

Finally, we consider the approximation of~\eqref{eq:prediction-future} in practice, where $X$ is only observed on the discrete set
$$
\Delta_N = \{t_j : t_j \in \Delta,j=1,2,\ldots,N\}\subset\Delta.
$$
Let $X_N\in\mathbb{R}^N$ be the observed vector with components $(\textbf{X}_{N})_j = X_{t_j}$, $t_j \in \Delta_N$.
Let $\textbf{R}_{N} \in \mathbb{R}^{N\times N}$ denote the covariance matrix of $\textbf{X}_{N}$ and let $\textbf{b}_{N}(t) \in \mathbb{R}^N$, $t\in[0,T]$ denote the vector consisting of covariances $\E[X_t X_{t_j}],t_j \in \Delta_N$. Using representation \eqref{eq:fredholm}, we thus have
$$
(\textbf{R}_{N})_{ij} = \int_0^T K(t_i,s)K(t_j,s)ds
$$
and 
$$
(\textbf{b}_{N}(t))_j = \int_0^T K(t,s)K(t_j,s)ds.
$$
\begin{thm}
\label{thm:discrete}
Suppose $X$ satisfies Assumption \ref{assu:standing} and that $\textbf{R}_{N}$ has full rank. Then for any $t\in [0,T]\setminus \Delta_N$ we have
\begin{equation}
\label{eq:discrete-predictor}
\hat{X}_{t,N} = \E\left[X_t | \mathcal{F}_{\Delta_N}\right] = \sum_{t_j \in \Delta_N} a_j(t) X_{t_j},
\end{equation}
where $\textbf{a}(t) = (a_1(t),\ldots,a_N(t))^T$ is given by
\begin{equation}
\label{eq:vector-a}
\textbf{a}(t) = \textbf{R}_{N}^{-1}\textbf{b}_{N}(t).
\end{equation}
Moreover, if the sequence $\mathcal{F}_{\Delta_N}, N\geq 1$ increases to $\mathcal{F}_\Delta$,  then 
\begin{equation}
\label{eq:discrete-predictor-convergence}
\hat{X}_{t,N} \to \E\left[X_t | \mathcal{F}_{\Delta}\right] 
\end{equation}
in $L^2(\Omega)$. 
\end{thm}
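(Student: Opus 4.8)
The plan is to treat the two assertions separately: the closed-form expression \eqref{eq:discrete-predictor}--\eqref{eq:vector-a} is a finite-dimensional Gaussian regression identity, while the convergence \eqref{eq:discrete-predictor-convergence} is a consequence of $L^2$-martingale convergence along the increasing filtration $(\mathcal{F}_{\Delta_N})_{N\ge 1}$.

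For the first assertion, note that $\Delta_N$ being finite gives $\mathcal{F}_{\Delta_N}=\sigma(X_{t_1},\dots,X_{t_N})$ and that $(X_t,X_{t_1},\dots,X_{t_N})$ is a centered jointly Gaussian vector. I would then invoke the standard fact that, for a centered Gaussian family, the conditional expectation of one coordinate given the $\sigma$-algebra generated by the others equals the $L^2(\Omega)$-orthogonal projection onto their linear span $V_N=\mathrm{span}\{X_{t_1},\dots,X_{t_N}\}$. Writing this projection as $\sum_j a_j(t)X_{t_j}$ and imposing the normal equations $\E\big[(X_t-\sum_j a_j(t)X_{t_j})\,X_{t_i}\big]=0$, $i=1,\dots,N$, produces the linear system $\mathbf{R}_N\mathbf{a}(t)=\mathbf{b}_N(t)$; since $\mathbf{R}_N$ is assumed to have full rank it is invertible, which yields \eqref{eq:vector-a} together with uniqueness of the coefficient vector. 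The kernel expressions for the entries of $\mathbf{R}_N$ and $\mathbf{b}_N(t)$ follow from the Wiener isometry applied to \eqref{eq:fredholm}, namely $\E[X_uX_v]=\int_0^TK(u,s)K(v,s)\,ds$.

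For the second assertion, set $M_N:=\hat X_{t,N}=\E[X_t\mid\mathcal{F}_{\Delta_N}]$. Since, by hypothesis, $\mathcal{F}_{\Delta_N}$ increases to $\mathcal{F}_\Delta$, the tower property makes $(M_N)_{N\ge 1}$ a martingale for the filtration $(\mathcal{F}_{\Delta_N})_{N\ge 1}$, and conditional Jensen gives $\E[M_N^2]\le\E[X_t^2]\le\sup_{u\in[0,T]}\E[X_u^2]<\infty$ by \eqref{eq:bounded-var}, so the martingale is bounded in $L^2(\Omega)$. Doob's $L^2$-martingale convergence theorem then yields convergence of $M_N$ in $L^2(\Omega)$ (and almost surely), while L\'evy's upward theorem identifies the limit as $\E[X_t\mid\sigma(\bigcup_N\mathcal{F}_{\Delta_N})]=\E[X_t\mid\mathcal{F}_\Delta]$, which is precisely \eqref{eq:discrete-predictor-convergence}.

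No step presents a genuine difficulty; the only points deserving care are the identification ``Gaussian conditional expectation $=$ linear projection'' used in the first assertion, which is exactly what turns the predictor into an explicit finite linear combination of the observations rather than an abstract projection onto $L^2(\Omega,\mathcal{F}_{\Delta_N})$, and, in the second assertion, using the hypothesis ``$\mathcal{F}_{\Delta_N}$ increases to $\mathcal{F}_\Delta$'' to pin down the martingale limit. It is worth remarking that this proof uses only joint Gaussianity, centering, and \eqref{eq:bounded-var}: neither the exact (rather than in-law) Fredholm representation nor the condition $H^X_T=H^W_T$ from Assumption \ref{assu:standing} plays any role here, consistent with the earlier remark that $H^X_T=H^W_T$ is dropped in the discrete approach.
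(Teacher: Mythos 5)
Your proposal is correct and follows essentially the same route as the paper: the closed form is obtained from the identification of the Gaussian conditional expectation with the orthogonal projection onto the span of $X_{t_1},\dots,X_{t_N}$ via the normal equations and invertibility of $\mathbf{R}_N$, and the convergence is obtained from the $L^2$-bounded martingale $M_N=\E[X_t\mid\mathcal{F}_{\Delta_N}]$ together with the martingale convergence theorem and the identification of the limit through $\mathcal{F}_{\Delta_N}\uparrow\mathcal{F}_\Delta$. Your closing observation that only joint Gaussianity, centering and \eqref{eq:bounded-var} are needed matches the paper's own remark that the condition $H_T^X=H_T^W$ is dropped in the discrete approach.
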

\textcolor{black}{Note that the Gaussian prediction formulas \eqref{eq:discrete-predictor}-\eqref{eq:vector-a} in the discrete setting are well-known (see, for example, \cite{BerlThom2004}). For the sake of completeness, we however present their proofs. Even so,} it is worth emphasizing that Theorem \ref{thm:discrete} provides a ready-to-use practical approach for prediction based on the covariances $\textbf{R}_N$ and $\textbf{b}_N$ only. 
Moreover, the assumption that $\mathcal{F}_{\Delta_N}$ is increasing is also natural, as this includes, e.g., the case where $\Delta_N\subset\Delta_{N+1}$ and, at each step, the new sample point is chosen uniformly from $\Delta$ and $N\rightarrow\infty$. 
%In other words, more time points $t_j$ are observed but old ones are not lost. 
Note that, in general, it is a challenging task to find the correct solution to the integral equations in Theorem \ref{thm:Gaussian-fredholm-prediction} or Theorem \ref{thm:Gaussian-prediction}. 
The convergence provided in Theorem~\ref{thm:discrete} avoids this difficulty. \changes{Note that, on the other hand, discretization in Theorem \ref{thm:discrete} corresponds to the discrete approximation to the integral equations, and hence the convergence depends heavily on the chosen grid $\Delta_N$. This is also visible in Proposition \ref{prop:convergence-rate} which provides the rate of convergence.}

\section{Partially Observed Functional Data}
\label{sec:FDA-prediction}
%%%%%%%%%%%%%%%%%%%%%%%%%%%%%%%%%%%%%%%%%%

In this section, we break from the Gaussian case and consider a general family of processes $(Y_t)_{t\in[0,T]}$ such that the marginal distribution of $Y_t$ may be arbitrary and such that $(Y_t)_{t\in[0,T]}$ display any (approximation of a) covariance structure. 
This can be achieved by setting $Y_t =f(t,X_t)$, for $X$ a Gaussian process satisfying Assumption \ref{assu:standing}. Discussion on these processes can be found in \cite{viitasaari-ilmonen2020}. \textcolor{black}{In the sequel, we denote by $R(t,s)$ the covariance function of the underlying Gaussian process $X$.}

The following result provides an explicit expression of the optimal $L^2(\Omega)$-reconstruction in this general setting.

%
%This section turns to the family of processes $Y_t =f(t,X_t)$, for $X$  a Gaussian process satisfying Assumption \ref{assu:standing}. In doing so, we break from the Gaussian case. Indeed, this general process may display any one-dimensional marginal distributions, and $Y$ can have arbitrary covariance structure. 
%
%
%This section goes beyond the Gaussian case from Section~\ref{sec:Gaussian-prediction} and rather considers general processes 
%The following first result of this section allows us to consider more general functional data instead of only realisations of Gaussian processes.
\begin{thm}
\label{thm:prediction-FDA}
Let $Y_t =f(t,X_t)$, where $X$ is a Gaussian process satisfying Assumption \ref{assu:standing} \textcolor{black}{and the measurable mapping $f : [0,T]\times \mathbb{R} \mapsto \mathbb{R}$ is such} that $x\mapsto f(t,x)$ is bijective for all $t\in[0,T]$. Then for any $t\in[0,T]\setminus \Delta$ we have
\begin{equation}
\label{eq:non-gaussian-prediction}
\mathbb{E}\left[Y_t | \mathcal{F}_\Delta^Y\right] = \int_{-\infty}^\infty f\big(t,\hat{X}_t +z\sqrt{\rho(t,t|\Delta)}\big)\phi(z)dz,
\end{equation}
where $\mathcal{F}_\Delta^Y=\sigma(Y_t:t\in\Delta)$, $\hat X_t$ is provided in Theorem~\ref{thm:Gaussian-fredholm-prediction}, $\rho(t,t|\Delta)$ is given by \eqref{eq:rho} and $\phi$ denotes the standard normal density. 
\end{thm}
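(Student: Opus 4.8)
The plan is to reduce everything to the Gaussian results of Section~\ref{sec:Gaussian-prediction} via the bijectivity of $x \mapsto f(t,x)$. The key observation is that, since $x\mapsto f(t,x)$ is bijective for each $t$, the $\sigma$-algebra $\mathcal{F}^Y_\Delta = \sigma(Y_t : t\in\Delta)$ coincides with $\mathcal{F}_\Delta = \sigma(X_t : t\in\Delta)$: indeed $Y_t = f(t,X_t)$ is $\mathcal{F}_\Delta$-measurable, and conversely $X_t = f(t,\cdot)^{-1}(Y_t)$ is $\mathcal{F}^Y_\Delta$-measurable. One small technical point I would address here is measurability of the inverse map $y\mapsto f(t,\cdot)^{-1}(y)$; since $x\mapsto f(t,x)$ is a bijection of $\mathbb{R}$, it is automatically monotone, hence Borel, and its inverse is monotone and therefore Borel as well, so no regularity beyond bijectivity is needed.

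Given $\mathcal{F}^Y_\Delta = \mathcal{F}_\Delta$, I would write
\[
\mathbb{E}\left[Y_t \mid \mathcal{F}^Y_\Delta\right] = \mathbb{E}\left[f(t,X_t) \mid \mathcal{F}_\Delta\right],
\]
and then invoke Theorem~\ref{thm:Gaussian-regular-law}: the regular conditional law of $X$ given $\mathcal{F}_\Delta$ is Gaussian, and in particular the conditional law of the single variable $X_t$ (for $t\in[0,T]\setminus\Delta$) is $\mathcal{N}\big(\hat X_t,\ \rho(t,t\mid\Delta)\big)$, where $\hat X_t$ is the $\mathcal{F}_\Delta$-measurable random mean from Theorem~\ref{thm:Gaussian-fredholm-prediction} and $\rho(t,t\mid\Delta)$ is the deterministic conditional variance from \eqref{eq:rho}. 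Applying the conditional law to compute the conditional expectation of the (Borel, and integrable since $\mathbb{E}[Y_t^2]<\infty$ can be assumed, or at least $\mathbb{E}|Y_t|<\infty$) function $f(t,\cdot)$ gives
\[
\mathbb{E}\left[f(t,X_t)\mid\mathcal{F}_\Delta\right] = \int_{-\infty}^\infty f\big(t, \hat X_t + z\sqrt{\rho(t,t\mid\Delta)}\big)\,\phi(z)\,\d z,
\]
by the standard change of variables writing a $\mathcal{N}(\mu,\sigma^2)$ variable as $\mu + \sigma Z$ with $Z\sim\mathcal{N}(0,1)$. This is exactly \eqref{eq:non-gaussian-prediction}.

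The main obstacle is the justification that the regular conditional law of $X_t$ given $\mathcal{F}_\Delta$ really is $\mathcal{N}(\hat X_t, \rho(t,t\mid\Delta))$ in the pointwise (in $t$) sense needed here, and that one may legitimately substitute this regular conditional distribution inside $\mathbb{E}[f(t,X_t)\mid\mathcal{F}_\Delta]$. This is precisely what Theorem~\ref{thm:Gaussian-regular-law} provides (existence of a regular conditional law, Gaussian, with the stated mean and covariance), so the real work is bookkeeping: restricting the multivariate statement to the one-dimensional marginal at the fixed point $t$, checking the disintegration formula $\mathbb{E}[g(X_t)\mid\mathcal{F}_\Delta] = \int g\,\d\mu_{X_t\mid\mathcal{F}_\Delta}$ holds for the Borel function $g = f(t,\cdot)$, and confirming the $z$-integral is well defined. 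A secondary, genuinely minor point is handling the degenerate case $\rho(t,t\mid\Delta)=0$ (which happens e.g. at endpoints of $\Delta$, cf. the bridge property), where the conditional law is the Dirac mass at $\hat X_t$ and the integral formula still holds by interpreting it as $f(t,\hat X_t)$; this is consistent with letting the variance tend to $0$ in the displayed integral.
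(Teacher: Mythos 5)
Your proof is correct and takes essentially the same route as the paper: identify $\mathcal{F}^Y_\Delta$ with $\mathcal{F}_\Delta$ via bijectivity, then use that the conditional law of $X_t$ given $\mathcal{F}_\Delta$ is $N(\hat X_t,\rho(t,t|\Delta))$ — you cite Theorem~\ref{thm:Gaussian-regular-law}, while the paper re-derives this one-dimensional fact directly from the independence of $\hat X_t$ and $X_t-\hat X_t$ — and finish with the change of variables to the standard normal density. One small inaccuracy in your measurability aside: a bijection of $\mathbb{R}$ need not be monotone (that requires continuity), although the inverse of a Borel bijection is still Borel by the Lusin--Souslin theorem, so your conclusion stands.
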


\begin{rem}\label{rem:role-of-t}
\changes{One would expect smaller estimation error when $t$ is close to the observed part $\Delta$. This is indeed visible in \eqref{eq:non-gaussian-prediction} where the value of $\rho(t,t|\Delta)$ becomes smaller as the distance of $t$ to $\Delta$ decreases. The integral then concentrates around the value $f(t,\hat{X}_t)$ as expected.} 
\end{rem}

Theorem~\ref{thm:prediction-FDA} puts in perspective the role played by $f$ in the reconstruction of the missing parts of $Y$. Should $f$ be known, then one could simply recover $X_t=(f(t,\cdot))^{-1}(Y_t)$ and apply Theorem~\ref{thm:Gaussian-fredholm-prediction}. Therefore, in this situation, it is not required to sample several processes to be able to reconstruct a single one. This reconstruction can be conducted on a curve-by-curve basis as in Section~\ref{sec:Gaussian-prediction}.

On the other hand, if $f$ is not known, as is arguably the case in most instances, then one should proceed with its estimation. 
%The rest of this section considers the general situation of a sample of functional observations with missing intervals. It presents an estimating procedure for $\mathbb{E}\left[Y_t | \mathcal{F}_\Delta^Y\right]$ based on a preliminary estimation of $f$. 
\changes{The rest of this section considers the general situation of a sample of functional observations with missing intervals. %It presents an estimating procedure for $\mathbb{E}\left[Y_t | \mathcal{F}_\Delta^Y\right]$ based on a preliminary estimation of $f$. 
Throughout, it is enough to assume that $f$ is injective since only its inverse is of interest. Without loss of generality, we can further assume that it is surjective on its image set. 
}

\subsection{Consistent estimation of $\mathbb{E}\left[Y_t | \mathcal{F}_\Delta^Y\right]$}

Let $(Y^j_t)_{t\in\Delta_j},j=1,2,\ldots, J$, be $J$ independent observations where, for each $j$, the observed part $\Delta_j$ is of the form \eqref{eq:partition-form}. Heuristically, if we have sufficiently many observations $Y^j_t$ for each $t\in[0,T]$, then we can estimate the function $f(t,X_t)$ and, by invertibility of $f$, recover the \emph{proxy} values for $X^j_t$. 
From these values, we can then estimate the covariance of the underlying process $X$. 
After that, we can estimate the prediction $\hat{X}_t$ and $\rho(t,t|\Delta)$, from which we get the prediction $\hat{Y}_t$ by replacing $f$, $\hat{X}_t$, and $\rho(t,t|\Delta)$ by their estimated values in \eqref{eq:non-gaussian-prediction}. 
In order to make this heuristic argument precise, we start by posing the following assumptions.
For our purposes, we state the following assumption on $f$.
\begin{ass}
\label{ass:f-continuous}
The process $(Y)_{t\in[0,T]}$ is of the form
\begin{equation}
\label{eq:model}
Y_t = f(t,X_t),
\end{equation}
where, for each $t\in[0,T]$, the transformation $z\mapsto f(t,z)$ is continuous and \textcolor{black}{increasing}. 
\end{ass}
\begin{rem}
\changes{The assumption that $z\mapsto f(t,z)$ is increasing is made for the sake of simplicity. See, e.g. Proposition \ref{prop:f-rep} below. By examining its proof, it is clear that it suffices to assume monotonicity and all results are valid for decreasing functions as well.}
\end{rem}

Note that, the continuity of $f$ in Assumption~\ref{ass:f-continuous} is equivalent to assuming that the distribution of $Y_t$ has full support \textcolor{black}{(on the image set of $f$)}, provided that the distribution of $X_t$ is continuous and has full support. Respectively, the bijectivity of $f$ amounts to assuming that the distribution of $Y_t$ is continuous under the same hypothesis on $X_t$. Note that the latter trivially holds in the context of the Gaussian processes $X$ used in this section. 
Remark also that no particular behavior in $t$ is assumed. In particular, the classical functional model $Y_t = h(t) + X_t$ (where each observation is a noisy version of a common mean $h$) always satisfies  Assumption~\ref{ass:f-continuous}, regardless whether $h$ is continuous or not. Finally, note that, under Assumption~\ref{ass:f-continuous}, the continuity of $f$ allows to approximate~\eqref{eq:non-gaussian-prediction} using standard Riemann-Stieltjes integration.

%This makes our presentation simpler, while one could easily generalise to the case of non-continuous $f$ with appropriate changes in the arguments \textcolor{black}{(and with appropriate assumptions)} \textcolor{blue}{I added the last part to be safe, as continuity helps indeed a lot in the proofs. We could consider removing this as well..?}.

In general, the model $Y_t = f(t,X_t)$ is not identifiable. Indeed, for any $c_t>0$, $t\in[0,T]$  it holds,
$$
g(t,Z_t) = f(t,X_t),
$$ 
where $g(t,z) = f\left(t,c_tz\right)$ and $Z_t = X_t/c_t$ is a Gaussian process. To ensure identifiability, one then needs to impose standardization constraints on $X_t$. We impose the following assumption.
\begin{ass}
\label{ass:var-X-known}
The variance function $t\mapsto \E X_t^2$ of $X$ is known.
\end{ass}

\changes{Assumption \ref{ass:var-X-known} is always satisfied (assuming some randomness in the model), as by the above considerations we can always choose the variance function $\E X_t^2$ arbitrarily.}
For example, imposing ${\rm Var}(X_t)=1$ for all $t\in[0,T]$ amounts to choosing $c_t=\sqrt{\mathbb{E}[X_t]}$  above. 
\changes{Note that, with different chosen variance function (i.e. choosing different $c_t$ above), one obtains a different function $f$ which is then estimated from the data.}
Note also that any other similar condition ensuring the identifiability of the model would suffice. As another example, one could impose the restriction $\text{Var}(Y_t) = \E[X_t^2]$. 
In this situation as the variance of $Y_t$ can be estimated from the observations, we could recover the variance function of $X$ as well. 
Under this assumption, the transformation $f$ can be expressed in a way suitable for future estimation. Since $x\mapsto f(t,x)$ is bijective, the proof follows easily and is hence omitted. 

Note that, if, for some $t$, $\mathbb{E}[X_t^2]=0$, then $\textrm{Var}(Y_t)=0$, so that $Y_t=c$ with probability one. 
One can then trivially predict $\mathbb{E}\left[Y_t | \mathcal{F}_\Delta^Y\right]=c$. 
In the following, we only consider predictions for values of $t$ such that $\mathbb{E}[X_t^2]>0$.

%The following proposition gives more insight to the transformation $f$ that is required for the estimation.  
\begin{pro}
\label{prop:f-rep}
Let $t\in[0,T]$ be fixed and let $F_t$ and $Q_t$ denote the cumulative distribution function and quantile function of the non constant random variable $Y_t$, respectively. Then, under Assumption \ref{ass:f-continuous}, we have
$$
f(t,z) = Q_t\left(\Phi_t(z)\right)
$$
and $X_t=(f(t,\cdot))^{-1}(Y_t)$, with
\changes{$$
(f(t,\cdot))^{-1}(z) = \Phi^{-1}_t\left(F_t(z)\right),
$$}
and where $\Phi_t$ is the cumulative distribution function of $N(0,\E X_t^2)$.
\end{pro}
\textcolor{black}{The proof of Proposition \ref{prop:f-rep} is straightforward, and hence omitted.}

We are now ready to introduce our estimation procedure for the predictor $\mathbb{E}\big[Y^j_t | \mathcal{F}_{\Delta_j}^Y\big]$, $t\in [0,T]\setminus \Delta_j$. 
%For notational simplicity, we assume that we consider some particular function $Y^j$ with observed part $\Delta_j$, and we drop the indices $j$ in the notation and simply write $Y$ and $\Delta$ instead of $Y^j$ and $\Delta_j$. 
We also assume that we are given the discrete net $\pi_N \subset [0,T]$ of evaluation points and the corresponding observed values $Y_{t_j},t_j \in \Delta_j\cap\pi_N$. 
For $t\in \pi_N$, let $K(t)=\{j:t\in\Delta_j\cap\pi_N\}$ denote the indices of observations $Y_j$ defined at $t$. 
Similarly, for $(t,s)\in\pi_N^2$, let $J(t,s)=\{j:t,s\in\Delta_j\cap\pi_N\}$ denote the indices of observations $Y_j$ defined at both $t$ and $s\in\Delta_j$. 
With a slight abuse of notation, we omit the dependence in $t$ or $(t,s)$ and simply use $K$ and $J$ for all $t,s \in \pi_N$. 
%Note that this can be done without loss of generality, as one can always consider the minimum amount $\min_{t\in \pi_N} K(t)$ or $\min_{t,s\in \pi_N} J(t,s)$ as a common number. 
The estimation is done by using the following steps.\vspace{0.3cm}
\begin{enumerate}
\item \textbf{Estimation of $f$ and construction of a proxy of the process $X$}:\vspace{0.3cm}\\
For $t \in \pi_N$, define the empirical distribution function based on the observations $Y_t^k,k\in K$ by
$$
\hat{F}_{t,K}(z) = \frac{1}{|K|+1} \sum_{k\in K} \1_{Y^k_t \leq z}.
$$
Note that here the summation is over all $|K|:=|K(t)|$ observed values $Y^k_t$, and we divide with $|K|+1$ instead of the usual $|K|$ to ensure that $\hat{F}_{t,K}(Y_t^k) \in (0,1)$ for any observed value $Y_t^k$.
Let $\hat{Q}_{t,K}$ denote the empirical quantile function  of $Y_t$ and define the estimator $\hat{f}$ of $f$ by setting, for each $t\in \pi_N$ and $z\in \R$, 
\begin{equation}
\label{eq:f-estimator}
\hat{f}_K(t,z) = \hat{Q}_{t,K}\left(\Phi_t(z)\right).
\end{equation}
The proxy $X_{t,K}^{\text{prox}}$ for the value $X_t$ is defined by
\begin{equation}
\label{eq:proxy-def}
X_{t,K}^{\text{prox}} = \Phi^{-1}_t\left(\hat{F}_{t,K}(Y_t)\right).
\end{equation}
Since $\hat{F}_{t,K}(Y_t) \in (0,1)$, $X_{t,K}^{\text{prox}} \in (-\infty,\infty)$ is well-defined.
\vspace{0.3cm}
\item \textbf{Estimation of the covariance}:\vspace{0.3cm}\\
For $t,s\in \pi_N$, we estimate the covariances \textcolor{black}{$R(t,s)$} by 
\begin{equation}
\label{eq:cov-proxy-estimation}
\hat{R}_J(t,s) = \frac{1}{|J|} \sum_{k\in J} X_{t,J}^{k,\text{prox}}X_{s,J}^{k,\text{prox}},
\end{equation}
where $X_{t,J}^{k,\text{prox}}$ denotes the proxy of the value $X_t^k$ associated to observations $Y_t^k$. 
%By definition, $\hat{R}_J(t,s)$ defines a covariance \textcolor{black}{provided that we choose $J$ as a common number for all $t,s\in \pi_N$}. 
\textcolor{black}{For a subset $\Delta \subset [0,T]$ and} for $\Delta\cap\pi_N\subset\pi_N$, let $\hat{\textbf{R}}_{N,\Delta}$ be the matrix collecting the covariance estimators $\hat{R}_J(t,s)$ for $t,s\in \Delta\cap\pi_N$ and let $\hat{\textbf{R}}_{N}:=\hat{\textbf{R}}_{N,\pi_N}$.
Similarly, for a fixed $t\in \pi_N$ and $\Delta\cap\pi_N\subset\pi_N$, we denote by 
$\hat{\textbf{b}}_{N,\Delta}(t)$ the vector consisting of covariance estimates $\hat{R}_J(t,s),s\in \Delta\cap\pi_N$.\vspace{0.3cm}
\item \textbf{Estimation of the predictors $\hat{X}^j_{t}$}:\vspace{0.3cm}\\
For a given $j$ and $t\in \pi_N \setminus \Delta_j$, approximate \eqref{eq:discrete-predictor} by
\begin{equation}
\label{eq:prediction-based-on-proxy}
\hat{X}_{t,N}^j = (a_j(t))'X_{\Delta_j,K}^{\text{prox}},
\end{equation}
where %$\textbf{a}(t) = \left(a_1(t),\ldots,a_{|K|}(t)\right)^T$ is given by
$$
a_j(t) = \hat{\textbf{R}}_{N,\Delta_j}^{-1}\hat{\textbf{b}}_{N,\Delta_j}(t)
$$
and $X_{\Delta_j,K}^{\text{prox}}$ is the vector containing the $X_{s,K}^{\text{prox}}$ for $s\in\Delta_j\cap\pi_N$ in the same order as in $\hat{\textbf{b}}_{N,\Delta_j}(t)$.
Note that here we have assumed explicitly that $\hat{\textbf{R}}_{N,\Delta_j}$ is of full-rank. 
Under assumptions of Theorem \ref{thm:discrete} this holds provided that, for every $(t,s)\in\pi_N^2$, $|J(t,s)|$ is large enough, ensuring $\hat{\textbf{R}}_{N}$ is close to $\textbf{R}_{N}$ (see Corollary \ref{cor:proxycov-convergence}). Note that, in practice, if $\hat{\textbf{R}}_{N,\Delta_j}$ is not of full-rank, then one could, for example, replace the inverses with general Moore-Penrose inverses.

\vspace{0.3cm}
\item \textbf{Estimation of $\rho(t,t|\Delta\cap\pi_N)$}:\vspace{0.3cm}\\
%Let $\hat{R}_{J}(t,s),t,s\in \pi_N$ be the estimated covariance \eqref{eq:cov-proxy-estimation}. 
For $\Delta\cap\pi_N\subset\pi_N$, we define a centered Gaussian vector $Z_{N}$ with covariance $\hat{\textbf{R}}_{N}$. 
By simulating $M$ vectors $Z^m_{N},m=1,2,\ldots M$, we can define
 the estimator $\hat{\rho}_{M}(t,t|\Delta\cap\pi_N)$ by
\begin{equation}
\label{eq:rho-estimator}
\hat{\rho}_{M}(t,t|\Delta\cap\pi_N) = \frac{1}{M}\sum_{m=1}^M \left(Z_{t,N}^{m}-\hat{Z}^m_{t,N}\right)^2,
\end{equation}
where $\hat{Z}^m_{t,N}$ is the prediction of $Z^m_{t,N}$ computed through \eqref{eq:discrete-predictor}.\vspace{0.3cm}
\item \textbf{Approximation of the integral}:\vspace{0.3cm}\\
The final step of our construction requires estimating the integral in~\eqref{eq:non-gaussian-prediction}. For any continuous function $h \in L^1(\R,\phi(z)dz)$, let $L>0$ and approximate the integral 
$$
I_\phi(h) := \int_{-\infty}^\infty h(z)\phi(z)dz \qquad\textrm{ with }\qquad
I_{\phi,L}(h) = \int_{-L}^L h(z)\phi(z)dz.
$$
For practical or simulation purposes this can be further approximated by Riemann sums
$$
I_{\phi,L}(h) = \sum_{z_k} h(z_k)\phi(z_k)|z_k-z_{k-1}|,
$$
where the sequence $z_k$ is a partition of $[-L,L]$ such that $\max_k|z_k-z_{k-1}| \to 0$ as $L\to \infty$.\vspace{0.3cm}
\end{enumerate}

We are now ready to define the predictor for $Y_t^j$. 
\begin{dfn}
\label{def:non-gaussian-predictor}
Let $t\in \pi_N \setminus \Delta_j$. For given $L$ and $M$, define
\begin{equation}
\label{eq:non-gaussian-predictor-approx}
\hat{Y}^j_{t,L,M,K,N} = I_{\phi,L}\left(\hat{h}_{M}(t,\cdot)\right),
\end{equation}
where 
$$
\hat{h}_{M}(t,z) = \hat{f}_K\left(t,\hat{X}^j_{t,N}+z\sqrt{\hat{\rho}_{M}(t,t|\Delta_j\cap\pi_N)}\right),
$$
and $\hat{f}_K$, $\hat{X}^j_{t,N}$, and $\hat{\rho}_{M}$ are given by 
\eqref{eq:f-estimator}, \eqref{eq:prediction-based-on-proxy}, and \eqref{eq:rho-estimator}, respectively.
\end{dfn}
The following result shows that the prediction can, under our model assumptions, be done using the observed data only.
\begin{thm}
\label{thm:data-driven-predictor}
Suppose that the non constant random variable $Y_t$ satisfies Assumptions \eqref{ass:f-continuous} and \eqref{ass:var-X-known} and let $\hat{Y}_{t,L,M,K,N}$ be given by \eqref{eq:non-gaussian-predictor-approx}. Let $J^\ast=\min_{(t,s)\in\pi_N}|J(t,s)|$. Assume $\mathcal{F}_{\Delta\cap\pi_N}$ increases to $\mathcal{F}_{\Delta}$ as $N\to\infty$. Then, 
$$
\lim_{L\to\infty}\lim_{N\to\infty}\lim_{J^\ast\to \infty}\lim_{M\to\infty}\hat{Y}_{t,L,M,K,N} = \mathbb{E}\left[Y_t | \mathcal{F}_{\Delta}^Y\right]
$$
in probability.
\end{thm}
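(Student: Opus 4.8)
The plan is to peel off the four nested limits from the inside out, identifying at each stage the object to which the estimator converges. The organizing observation is that, for fixed $L$, all dependence of $\hat Y^j_{t,L,M,K,N}$ on the inner parameters $M,K,N$ enters only through a scalar location and a scalar scale, fed into the functional
$$
\Psi_{L,g}(x,\sigma) := \int_{-L}^{L} g(x+z\sigma)\,\phi(z)\,\d z .
$$
For any fixed bounded $g$ whose discontinuity set is Lebesgue-null, $\Psi_{L,g}$ is continuous in $(x,\sigma)$ by dominated convergence; the estimators $\hat f_K(t,\cdot)=\hat Q_{t,K}(\Phi_t(\cdot))$ are bounded step functions of this type, and the limit $f(t,\cdot)$ is continuous (Assumption~\ref{ass:f-continuous}). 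Thus the whole argument amounts to propagating convergence of the triple $\big(\hat X^j_{t,N},\hat\rho_M(t,t|\Delta_j\cap\pi_N),\hat f_K(t,\cdot)\big)$ through $\Psi$, using throughout the standard device that a convergence-in-probability input may be passed to an a.s.-convergent subsequence, localized to a compact set on which $g$ is bounded (and, for the limit $f$, uniformly continuous), handled by dominated convergence, and the conclusion recovered for the full sequence since every subsequence has such a further subsequence.

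First, the limit $M\to\infty$: fixing $L,N,K$ and conditioning on the data, hence on $\hat{\mathbf R}_N$, the simulated residuals $(Z^m_{t,N}-\hat Z^m_{t,N})^2$ are i.i.d.\ with finite mean, so by the strong law of large numbers $\hat\rho_M(t,t|\Delta_j\cap\pi_N)\to\hat\rho(t,t|\Delta_j\cap\pi_N):=\hat R_J(t,t)-\hat{\mathbf b}_{N,\Delta_j}(t)'\hat{\mathbf R}_{N,\Delta_j}^{-1}\hat{\mathbf b}_{N,\Delta_j}(t)$ almost surely, i.e.\ to the conditional variance in the Gaussian model with covariance $\hat{\mathbf R}_N$; hence $\hat Y^j_{t,L,M,K,N}\to\Psi_{L,\hat f_K(t,\cdot)}\big(\hat X^j_{t,N},\sqrt{\hat\rho(t,t|\Delta_j\cap\pi_N)}\,\big)$. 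Next, the limit $J^\ast\to\infty$, under which $|K(t)|\to\infty$ for every $t\in\pi_N$: Glivenko--Cantelli gives $\hat F_{t,K}\to F_t$ uniformly in probability, and since $Y_t$ has a continuous, full-support distribution (Assumption~\ref{ass:f-continuous}, via Proposition~\ref{prop:f-formula}) the quantile functions converge locally uniformly, so $\hat f_K(t,\cdot)\to f(t,\cdot)$ locally uniformly and the proxies $X^{k,\mathrm{prox}}_{t,K}=\Phi_t^{-1}(\hat F_{t,K}(Y^k_t))$ converge to $X^k_t=\Phi_t^{-1}(F_t(Y^k_t))$, all in probability. Combining these with a law of large numbers yields $\hat R_J(t,s)\to\E[X_tX_s]$ in probability for all $t,s\in\pi_N$ — this is Corollary~\ref{cor:proxycov-convergence} — whence $\hat{\mathbf R}_N\to\mathbf R_N$ and $\hat{\mathbf b}_{N,\Delta_j}(t)\to\mathbf b_{N,\Delta_j}(t)$; since $\mathbf R_N$ has full rank, $\hat{\mathbf R}_{N,\Delta_j}$ is invertible on an event of probability tending to one, and by continuity of inversion $a_j(t)\to\mathbf R_{N,\Delta_j}^{-1}\mathbf b_{N,\Delta_j}(t)$, so that $\hat X^j_{t,N}\to\hat X_{t,N}:=\E[X_t|\mathcal{F}_{\Delta_j\cap\pi_N}]$ and $\hat\rho(t,t|\Delta_j\cap\pi_N)\to\rho(t,t|\Delta_j\cap\pi_N)=\E[(X_t-\hat X_{t,N})^2]$ in probability. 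Passing these through $\Psi$ (now with the genuine continuity of $f$) gives convergence in probability to $\Psi_{L,f(t,\cdot)}\big(\hat X_{t,N},\sqrt{\rho(t,t|\Delta_j\cap\pi_N)}\,\big)$.

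For the limit $N\to\infty$, Theorem~\ref{thm:discrete} together with the hypothesis $\mathcal{F}_{\Delta\cap\pi_N}\uparrow\mathcal{F}_\Delta$ gives $\hat X_{t,N}\to\hat X_t=\E[X_t|\mathcal{F}_\Delta]$ in $L^2(\Omega)$, hence in probability; moreover $\rho(t,t|\Delta_j\cap\pi_N)=\E X_t^2-\E\hat X_{t,N}^2$ decreases to $\E X_t^2-\E\hat X_t^2=\rho(t,t|\Delta)$ by $L^2$ martingale convergence, so a further application of $\Psi$ yields convergence in probability to $\Psi_{L,f(t,\cdot)}\big(\hat X_t,\sqrt{\rho(t,t|\Delta)}\,\big)$. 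Finally, since $\int_{-\infty}^{\infty}|f(t,\hat X_t+z\sqrt{\rho(t,t|\Delta)})|\,\phi(z)\,\d z=\E[|Y_t|\,|\,\mathcal{F}_\Delta^Y]<\infty$ almost surely, dominated convergence gives, for each realization, $\Psi_{L,f(t,\cdot)}\big(\hat X_t,\sqrt{\rho(t,t|\Delta)}\,\big)\to\int_{-\infty}^{\infty}f(t,\hat X_t+z\sqrt{\rho(t,t|\Delta)})\phi(z)\,\d z$ as $L\to\infty$, and by Theorem~\ref{thm:prediction-FDA} this last integral equals $\E[Y_t|\mathcal{F}_\Delta^Y]$. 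Chaining the four limits in the stated order proves the theorem.

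The main obstacle is the $J^\ast$-layer. There one must simultaneously replace the unknown transformation $f$ by the step function $\hat f_K$ assembled from empirical quantiles \emph{and} replace the latent Gaussian values $X^k_t$ by proxies obtained by pushing empirical distribution functions through $\Phi_t^{-1}$, then feed these proxies into a sample covariance and still recover $\hat R_J\to R$ (the content of Corollary~\ref{cor:proxycov-convergence}); one must then verify that the jumps of $\hat f_K$ do not obstruct the passage to the limit inside $\Psi_{L,\cdot}$, which is precisely where the a.e.-convergence and localization bookkeeping described above is needed. By comparison the other three layers are soft: a law of large numbers for the Monte Carlo scale, $L^2$ martingale convergence (Theorem~\ref{thm:discrete}) for the grid refinement, and a one-line dominated-convergence argument in $L$.
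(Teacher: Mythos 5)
Your proof is correct and follows essentially the same route as the paper's: the same four-stage peeling of the limits in the same order, with the same key ingredients (continuity of the truncated Gaussian integral in the location and scale parameters, Glivenko--Cantelli for the empirical quantile and proxy construction, the law of large numbers for the Monte Carlo variance, and Theorem~\ref{thm:discrete} for the grid refinement). The only cosmetic difference is the final $L\to\infty$ step, where you invoke dominated convergence via integrability of $Y_t$ while the paper uses monotone convergence after splitting $f(t,\cdot)$ at its single sign change; both are valid.
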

\begin{rem}
\changes{
Note that since $\mathcal{F}_{\Delta\cap\pi_N}$ increases to $\mathcal{F}_{\Delta}$ as $N\to\infty$, we have $|\pi_N|:=\max_{j}|t_j^N-t_{j_1}^N| \to 0$, where $t_{j-1}^N$ and $t_j^N$ are consecutive points in $\pi_N$. Indeed, information $\mathcal{F}_{\Delta\cap\pi_N}$ provided by discrete points can increase to the information $\mathcal{F}_{\Delta}$ based on observations in continuous time can only happen if either the resolution $|\pi_N|$ vanishes or the process $X$ is degenerate. This latter case is excluded due to non degeneracy assumption on the covariance function.
}
\end{rem}
\begin{rem}
	In the result above, note that $J^\ast\leq |K|$. The assumption $J^\ast\to\infty$ is required to ensure that $\hat{\mathbf{R}}_N\to \mathbf{R}_N$. 
\end{rem}

\begin{rem}
\changes{Typically, in the functional data analysis approach, the object of study is $Y_t = h(t) + \varepsilon_t$ for $t\in\Delta$ and $\varepsilon_t$ independent errors. 
The first standard aim, in order to apply functional techniques, is to reconstruct $h(t)$ from the discrete observations. 
The model studied in this paper allows for such reconstruction as one can set $Y_t = f(t,\varepsilon_t) = h(t) + \varepsilon_t$. 
In this case reconstruction of $h(t)$ is hidden into the estimation of $f$, and the underlying Gaussian process $X_t$ is determined by the error process $\varepsilon_t$. 
We remark, however, that if $\varepsilon_t$ is assumed to have independent errors for all $t\in [0,T]$ (as is often the case in FDA), then, formally, $\varepsilon_t$ corresponds to a white noise which is not a stochastic process \emph{per se}. % but instead, it exists as a random generalised function. 
On the other hand, if one requires independent errors only for certain discrete time points, one can overcome this issue by considering $\varepsilon_t = W_t - W_{t-\delta}$ for some $\delta$ small enough and with $W_t$ a Brownian motion. 
In this case, $\varepsilon_t$ exists as a Gaussian process and has independence on time points that are at least of distance $\delta$ away from each other.
}
\end{rem}
\begin{rem}
\changes{
Note that the discrete approximation described above is very general. 
If one further assumes that the processes are of a parametric nature (as in the simulations presented in Section~\ref{sec:simulations} below), then step (ii) can be simplified. Indeed, one can then estimate the relevant parameters and estimate the associated covariance structure by plugging-in the parameter estimates. 
}
\end{rem}

\subsection{On the rate of convergence}

Under additional assumption on the regularity of the function $f$, we now obtain an upper bound for the rate of convergence. 
In the sequel, for random variables $R_{\theta,\gamma}$, $R_{\theta}$ and $C_\theta$ and for a deterministic function $g(\gamma)$, we use the notation
$$
|R_{\theta,\gamma} - R_\theta| = O_p\left(C_\theta g(\gamma)\right),
$$
interchangeably with $C_\theta^{-1}|R_{\theta,\gamma} - R_\theta| = O_p\left(g(\gamma)\right)$.
That is, to indicate that, or any $\epsilon>0$ there exists $M>0$ such that 
$$
P(|R_{\theta,\gamma} - R_\theta|C_\theta^{-1}g^{-1}(\gamma) > M) < \epsilon.
$$
%In the sequel, for a random variables $R_{\theta,\gamma}$ and $R_{\theta}$ depending on a set of parameters $\theta$ and $\gamma$, we use the notation, for $C_\theta$ a random variable and $g(\gamma)$ a deterministic function,
%$$
%|R_{\theta,\gamma} - R_\theta| = O_p\left(C_\theta g(\gamma)\right),
%$$
%to indicate boundedness of $C_\theta^{-1}|R_{\theta,\gamma} - R_\theta|$ by $g(\gamma)$ in probability. 
%That is, for any $\epsilon>0$ there exists $M>0$ such that 
%$$
%P(|R_{\theta,\gamma} - R_\theta|C_\theta^{-1}g^{-1}(\gamma) > M) < \epsilon.
%$$
\begin{pro}
\label{prop:convergence-rate}
Suppose that $Y_t$ satisfies Assumptions \ref{ass:f-continuous} and \ref{ass:var-X-known} and let $\hat{Y}_{t,L,M,K,N}$ be given by \eqref{eq:non-gaussian-predictor-approx}. 
Let $J^\ast=\min_{(t,s)\in\pi_N^2}|J(t,s)|$. 
Assume $\mathcal{F}_{\Delta\cap\pi_N}$ increases to $\mathcal{F}_{\Delta}$ as $N\to\infty$. 
Now, let $t\in[0,T]$ be fixed.
Suppose further that the function $z\mapsto f(t,z)$ is continuously differentiable. 
Then, 
\begin{equation*}
\begin{split}
\hat{Y}_{t,L,M,K,N} -  \mathbb{E}\left[Y_t | \mathcal{F}_{\Delta}^Y\right] &= O_p\left(\frac{C_{L,N}}{\sqrt{K}}\right) + O_p\left(\frac{C_{L,N}}{\sqrt{M}}\right)\\
& + O_p\left(C_{L}\sqrt{\E\left(\hat{X}_{t,N,j}- \hat{X}^j_{t}\right)^2}\right) \\
&+ \int_{|z|>L}  f\left(t, \hat{X}^j_{t} + z \sqrt{\rho(t,t|\Delta)}\right) \phi(z)dz
\end{split}
\end{equation*}
in probability.
\end{pro}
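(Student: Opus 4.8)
The plan is to split the total error $\hat{Y}_{t,L,M,K,N}-\mathbb{E}[Y_t\mid\mathcal{F}_\Delta^Y]$ along the successive approximation steps of Definition~\ref{def:non-gaussian-predictor} and to bound each contribution using the linearity of $I_{\phi,L}$ and the mean value theorem applied to the $C^1$ map $z\mapsto f(t,z)$. By Theorem~\ref{thm:prediction-FDA}, $\mathbb{E}[Y_t\mid\mathcal{F}_\Delta^Y]=I_\phi(h(t,\cdot))$ with $h(t,z)=f(t,\hat X^j_t+z\sqrt{\rho(t,t\mid\Delta)})$, while by construction $\hat Y_{t,L,M,K,N}=I_{\phi,L}(\hat h_M(t,\cdot))$. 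Writing
\[
\hat Y_{t,L,M,K,N}-\mathbb{E}[Y_t\mid\mathcal{F}_\Delta^Y]=\bigl(I_{\phi,L}(\hat h_M(t,\cdot))-I_{\phi,L}(h(t,\cdot))\bigr)+\bigl(I_{\phi,L}(h(t,\cdot))-I_\phi(h(t,\cdot))\bigr),
\]
the second bracket is, up to sign, exactly the tail integral $\int_{|z|>L}f(t,\hat X^j_t+z\sqrt{\rho(t,t\mid\Delta)})\phi(z)\,dz$, i.e.\ the fourth term in the statement, so it remains to estimate $\int_{-L}^L[\hat h_M(t,z)-h(t,z)]\phi(z)\,dz$.

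For this last quantity I would insert the three estimated ingredients one at a time, writing
\[
\hat h_M(t,z)-h(t,z)=\bigl[\hat f_K-f\bigr]\bigl(t,\hat X^j_{t,N}+z\sqrt{\hat\rho_M}\bigr)+\Bigl(f\bigl(t,\hat X^j_{t,N}+z\sqrt{\hat\rho_M}\bigr)-f\bigl(t,\hat X^j_t+z\sqrt{\rho(t,t\mid\Delta)}\bigr)\Bigr),
\]
abbreviating $\hat\rho_M=\hat\rho_M(t,t\mid\Delta_j\cap\pi_N)$. Since $\hat X^j_{t,N}$, $\hat X^j_t$ and $\hat\rho_M$ are $O_p(1)$, for $|z|\le L$ all arguments of $f$ above lie, with probability tending to one, in a fixed compact $I_L\subset\mathbb{R}$; set $C_L:=\sup_{w\in I_L}|\partial_w f(t,w)|<\infty$. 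By the mean value theorem the second term is bounded by $C_L\bigl(|\hat X^j_{t,N}-\hat X^j_t|+|z|\,|\sqrt{\hat\rho_M}-\sqrt{\rho(t,t\mid\Delta)}|\bigr)$; integrating against $\phi$ and using $\int|z|\phi(z)\,dz<\infty$ together with $|\sqrt a-\sqrt b|\le|a-b|/\sqrt b$ and the nondegeneracy $\rho(t,t\mid\Delta)>0$ (which holds since $\mathbb{E}X_t^2>0$), its contribution is $O_p\bigl(C_L|\hat X^j_{t,N}-\hat X^j_t|\bigr)+O_p\bigl(C_L|\hat\rho_M-\rho(t,t\mid\Delta)|\bigr)$. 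The first, after $|\cdot|=O_p(\sqrt{\mathbb{E}(\cdot)^2})$, is the third term of the statement; for the second I split off the Monte Carlo error $\hat\rho_M-\tilde\rho_N$, where $\tilde\rho_N$ is the conditional variance of the centered Gaussian vector with covariance $\hat{\mathbf R}_N$: since $\hat\rho_M$ is an average of $M$ i.i.d.\ square-integrable terms with mean $\tilde\rho_N$, this is $O_p(M^{-1/2})$, i.e.\ the second term, while the residual bias $\tilde\rho_N-\rho(t,t\mid\Delta)$, which vanishes as $J^\ast\to\infty$ (Corollary~\ref{cor:proxycov-convergence}) and $N\to\infty$ (using $\mathcal{F}_{\Delta\cap\pi_N}\uparrow\mathcal{F}_\Delta$, as in Theorem~\ref{thm:discrete}), is of the same order as, hence absorbed into, the $\hat X$-term. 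Finally, on the localizing event the first term is at most $\sup_{w\in I_L}|\hat f_K(t,w)-f(t,w)|$; by Proposition~\ref{prop:f-formula} this equals $\sup_{w\in I_L}|(\hat Q_{t,K}-Q_t)(\Phi_t(w))|$, so it suffices to control the empirical quantile process of $Y_t$ on the compact sub-interval $\Phi_t(I_L)\subset(0,1)$, which by the Dvoretzky--Kiefer--Wolfowitz inequality and the standard uniform-on-compacts Bahadur expansion is $O_p(|K|^{-1/2})$, with a constant depending on $L$ (through $I_L$) and on $N$ (through the law of $\hat X^j_{t,N}$); this is the first term. Summing the four contributions proves the claim.

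The step I expect to be the main obstacle is the uniform control of $\hat f_K-f$: the empirical quantile function converges uniformly only on compact sub-intervals of $(0,1)$, and at rate $|K|^{-1/2}$ only when $Y_t$ has there a continuous density bounded away from $0$ and $\infty$, which is precisely where the hypotheses that $f(t,\cdot)$ be $C^1$ and bijective (hence a strictly increasing diffeomorphism onto its image on compacts) and the compactification by $L$ are both indispensable; moreover, since the point at which $\hat f_K-f$ is evaluated is itself random and depends on $N$ and $M$, the localization to the deterministic compact $I_L$ — and the attendant $L$- and $N$-dependence of the constants $C_{L,N}$ — must be set up carefully before any empirical-process rate can be invoked. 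Tracking how the Monte Carlo, covariance-estimation and discretization errors propagate through $z\mapsto z\sqrt{\,\cdot\,}$ and then through $f$ is conceptually routine but bookkeeping-heavy.
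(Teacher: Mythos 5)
Your overall architecture matches the paper's: peel off the tail $\int_{|z|>L}$, then telescope the compactified integral through the estimated ingredients ($\hat f_K$ versus $f$, estimated mean and variance versus their population versions), controlling each difference by uniform convergence of the empirical quantile function on compacts and by Lipschitz continuity of $f(t,\cdot)$ on a compact set determined by $L$ and $N$. Your handling of the tail term, of $\hat f_K-f$, and of the Monte Carlo error in $\hat\rho_M$ is essentially the paper's.

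There is, however, a genuine gap in your treatment of the predictor error. You bound the mean-value contribution by $O_p\big(C_L|\hat X^j_{t,N}-\hat X^j_t|\big)$ and then declare that, after $|\cdot|=O_p(\sqrt{\E(\cdot)^2})$, this is the third term of the statement. But the third term involves $\hat X_{t,N,j}$ --- the discrete predictor built from the \emph{true} covariance $\mathbf{R}_N$ and the true values $X_{t_k}$ --- not the proxy-based $\hat X^j_{t,N}$ of \eqref{eq:prediction-based-on-proxy}. One must split $\hat X^j_{t,N}-\hat X^j_t=(\hat X^j_{t,N}-\hat X_{t,N,j})+(\hat X_{t,N,j}-\hat X^j_t)$: only the second piece is amenable to Markov's inequality (a difference of conditional expectations of a square-integrable Gaussian variable), while the first piece requires a separate argument showing $\hat X^j_{t,N}-\hat X_{t,N,j}=O_p(C_N/\sqrt{K})$ --- via $X^{\text{prox}}_{t,K}-X_t=O_p(K^{-1/2})$ (Delta method and Donsker), hence $\hat R_J(t,s)-R(t,s)=O_p(K^{-1/2})$, hence, using $A^{-1}-B^{-1}=A^{-1}(B-A)B^{-1}$, $\hat{\mathbf{R}}_{N,\Delta_j}^{-1}-\mathbf{R}_N^{-1}=O_p\big(\hat{\mathbf{R}}_{N,\Delta_j}-\mathbf{R}_N\big)$. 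This is the substance behind the $O_p(C_{L,N}/\sqrt{K})$ term and it is absent from your write-up. Worse, applying $|\cdot|=O_p(\sqrt{\E(\cdot)^2})$ directly to $\hat X^j_{t,N}-\hat X^j_t$ is not justified: $\hat X^j_{t,N}$ contains the inverse of a random covariance matrix, and the paper explicitly notes (remark after Corollary~\ref{cor:proxy-predictor-convergence}) that $L^p$ convergence may fail for exactly this reason, so the second moment you invoke need not be finite. A smaller issue of the same flavour: your bound $|\sqrt a-\sqrt b|\le|a-b|/\sqrt b$ presupposes $\rho(t,t|\Delta)>0$, which does not follow from $\E X_t^2>0$ (a smooth process can be perfectly predictable from $\mathcal{F}_\Delta$); the paper instead bounds $|\sqrt{\rho(t,t|\Delta\cap\pi_N)}-\sqrt{\rho(t,t|\Delta)}|$ by $\sqrt{\E(\hat X_{t,N,j}-\hat X^j_t)^2}$ via the triangle inequality for the $L^2(\Omega)$-norm, with no nondegeneracy assumption.
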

\begin{rem}
The need for a compactification step, \changes{as per step 5 of the proposed algorithm}, also impacts the proposition above. Indeed, if one considers the whole real line ($L=\infty$), then the random constants $C_{L,N}$ in the first two terms does not necessarily remain bounded.  
\end{rem}
\begin{rem}
Under further assumptions on the function $f$, the last two terms in Proposition \ref{prop:convergence-rate} can be simplified further. 
By examining the proof, we observe that if $f$ is globally Lipschitz continuous (that is, the quantile function $Q_t$ is globally Lipschitz), then the third term can be replaced with 
$$
O_p\left(\sqrt{\E\left(\hat{X}_{t,N,j}- \hat{X}^j_{t}\right)^2}\right).
$$
Similarly, the last term 
$$
\int_{|z|>L}  f\left(t, \hat{X}^j_{t} + z \sqrt{\rho(t,t|\Delta)}\right) \phi(z)dz
$$
is related to the growth condition of $f$. In particular, if $f$ is also globally bounded, this term can be bounded by a Gaussian tail associated to the variance $\rho(t,t|\Delta)$. Note that in this case, the quantile function $Q_t$ is bounded as well, and consequently the empirical quantile function would converge uniformly on the whole interval $[0,T]$. This would allow us to drop compactification and set $L=\infty$ directly.
\end{rem}
\begin{rem}
Note that one would expect smaller errors when the prediction point $t$ is close to the observed part $\Delta^j$. However, as seen from Proposition \ref{prop:convergence-rate}, this does not affect the rate of convergence which arises mainly from estimation of the unknown function $f$ and the proxy values of the underlying Gaussian process. On the other hand, it has an effect on the underlying constants and on the value of $\rho(t,t|\Delta\cap\pi_N)$, cf. Remark \ref{rem:role-of-t}.
\end{rem}

%%%%%%%%%%%%%%%%%%%%%%%%%%%%%%%%%%%%%%%%%%
\section{Numerical Study}
\label{sec:simulations}
%%%%%%%%%%%%%%%%%%%%%%%%%%%%%%%%%%%%%%%%%%

In this section, we illustrate \changes{our methodology and} the theoretical results from Sections~\ref{sec:Gaussian-prediction} and \ref{sec:FDA-prediction}. \changes{We start with a simulation study aiming at illustrating the quality of the proposed reconstruction. Next, we illustrate the methodology on S$\&$P500 data.}

%%%%%%%%%%%%%%%%
\subsection{Simulation study}
\label{subsec:simulations}
%%%%%%%%%%%%%%%%

For the sake of simplicity, we assume throughout that $T=1$ and that the missing part of the processes is of length $0.2$. That is $\Delta=[0,t_U]\cup[t_L,1]$, with $t_L-t_U=0.2$.
The general setting considered here further assumes that the process $X_t$ is discretized at equidistant locations $t_i=\frac{i}{N}$, $i=1,\dots,N$ of $[0,1]$. 
%Throughout, the number of evaluation points will be fixed to $n=1000$. 
We measure the loss incurred from estimating $X_{N}$ on $\bar \Delta= [0,1]\setminus\Delta$ with $\hat X_{N}$ via 
$$L(X_{N},\hat X_{N})=\frac{1}{0.2n}\sum_{t_i\in\bar \Delta}\big(X_{t_i}-\hat X_{t_i}\big)^2,$$
that is, the discretized approximation of the $L^2$ loss on $\bar \Delta$ based on the full processes $X$ and $\hat X$, $L(X,\hat X)=\int_{\bar \Delta}(X_t-\bar X_t)^2 dt.$ 
Different simulation settings are considered.

\emph{Simulation setting 1: fractional Brownian motions (fBm):} This setting assumes that $X_t=B^H_t$ is a fractional Brownian motion, that is, a centered Gaussian process with covariance 
$$
R(t,s) = \frac{1}{2}\left[t^{2H}+s^{2H}-|t-s|^{2H}\right].
$$
The Hurst parameter $H\in(0,1)$ gives the H\"older and self-similarity index of the process, allowing varying roughness and memory.
The case $H=1/2$ corresponds to (classical) Brownian motions. 
It is known that fractional Brownian motions satisfy Assumption \ref{assu:standing} for any $H\in(0,1)$, and their kernels $K_H(t,s)$ are completely known (see, e.g., \cite{Biagini-Hu-Oksendal-Zhang-2008} and references therein).

\emph{Simulation settings 2 and 3: bifractional Brownian motions (bBm): } A bifractional Brownian motion is a centered Gaussian process  with covariance
$$
R(t,s) = \frac{1}{2^K}\left[(t^{2H}+s^{2H})^K-|t-s|^{2HK}\right].
$$
The parameters are such that $H\in(0,1)$, $K\in (0,2)$ and $HK \in (0,1)$. 
Note that, for $K=1$, one recovers a fBm. 
These Gaussian processes were introduced to model situations where small increments tend to be stationary, but large increments do not. 
That is, only the small scale behavior tends to be close to a fBm. 
Moreover, the case $HK=\frac12$ provides an interesting model in mathematical finance, as then one obtains a process that has non-trivial quadratic variation similar to the standard Brownian motion, while possessing memory. 
For details on bifractional Brownian motion, see the monograph \cite{Tudor-2013}.
It is known that bifractional Brownian motions are purely non-deterministic and $HK$-self-similar. 
Consequently, Assumption \ref{assu:standing} is satisfied despite the fact that the kernel is not explicitly known. 
In the simulations, values $K=2/3$ (setting 2) and $K=3/4$ (setting 3) are considered. 

\emph{Simulation setting 4: independent mixed fractional Brownian motions (imfBm): } A mixed fractional Browian motion consists of a sum of a standard Brownian motion and a fractional Brownian motion, i.e. 
$$
X_t = W_t + B^H_t.
$$
In this setting, we assume that the processes $W$ and $B^H$ are independent, a case in which the covariance is known explicitly and is given by 
$$R(t,s)=\min(s,t)+\frac{1}{2}\left[t^{2H}+s^{2H}-|t-s|^{2H}\right].$$
Especially with $H>\frac12$, the model is widely used in mathematical finance. It is known that independent mixed fractional Brownian motions satisfy Assumption \ref{assu:standing}. 
See~\cite{Mishura-2008}.% and \cite{Nualart-2006}. \\

Figure~\ref{fig:section2} displays, for each simulation setting and each $H\in\{0.2,0.5,0.8\}$ a random observation $X_t$ together with the optimal prediction $\hat X_{t,N}$ in the interval $\bar\Delta=[0.4,0.6]$. 
The reconstruction was conducted according to Theorem~\ref{thm:discrete}.
In particular, for illustration purposes, we assume that the covariance $R_N$ is known and is used in the reconstruction of $X_{t,N}$ on $\bar\Delta$. 

As evidenced by Figure~\ref{fig:section2}, the curves become smoother for larger self-similarity indices. 
Furthermore, when the self-similarity index is $0.5$ (that is, in cases 1 and 4, for $H=0.5$), the optimal reconstruction yields straight lines, as displayed. 
The other cases show the impact of the memory present in the processes on the optimal prediction. 
Note again that, in the context of Theorem~\ref{thm:discrete}, $\hat X_{t,N}$ is based on the observed parts of $X$ only and that the reconstruction can be conducted independently for each curve. 

%Figure~\ref{fig:section2} illustrates the optimal reconstruction in the setting of fBm with Hurt indices $H=0.25$, $H=0.5$ and $H=0.75$. As expected, for the standard case $H=0.5$, $\hat X_{t}$ is a linear interpolation of $X_{t_U}$ and $X_{t_L}$. This is not the case for other $H$ values, that, however, remain smooth and still bridge $X_{t_U}$ and $X_{t_L}$. %Both other cases present a reconstruction such that $X_{t_U}=\hat X_{t_U}$ and $X_{t_L}=\hat X_{t_L}$ OTHER COMMENTS?

\begin{figure}[h!]
  \includegraphics[width=\textwidth]{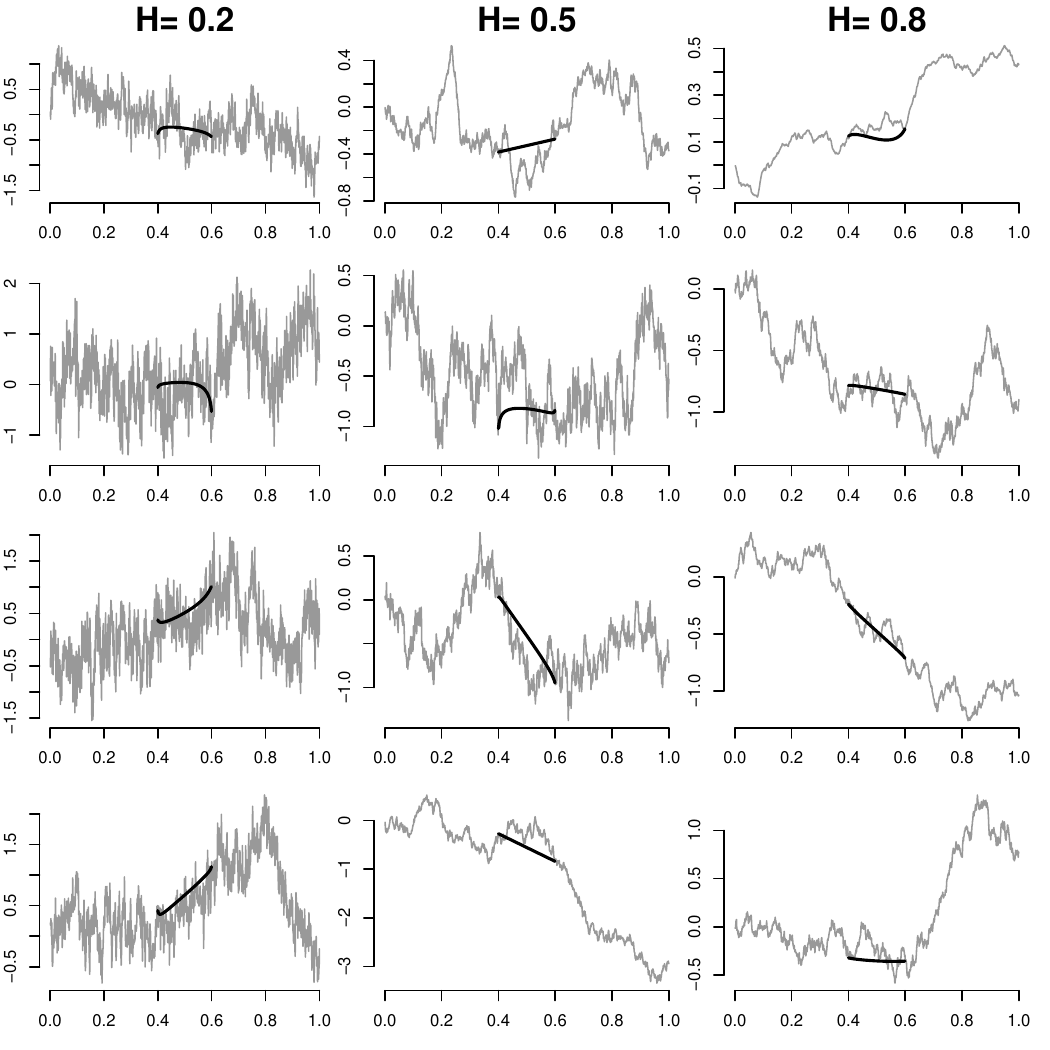}
  \caption{Gaussian processes (in grey) generated according to each simulation setting (row 1: fBm, row 2-3 : bBm, for $K=2/3$ and $K=3/4$, row 4: imfBm) for Hurst indices $H=0.2$ (left), $H=0.5$ (middle) and $H=0.8$ (right). The optimal reconstructed curves on $\bar\Delta=[0.4,0.6]$ are displayed in black. 
}
\label{fig:section2}
\end{figure}

In order to assess the quality of the reconstruction, the following experiment was conducted. 
In each simulation setting, $n=500$ observations $X_1,\dots,X_n$ were generated and observed on the regular grid $\pi_{N}$ for $N=250$. 
For each $j=1,\dots,500$, $Y_j=f(X_j)$ was assumed to be observed on $\Delta_j=[0,1]\setminus [t_{U,j},t_{L,j}]$, with $t_{U,j}$ chosen uniformly within the interval $[0.1,0.7]$ and $t_{L,j}=t_{U,j}+0.2$. 
This resulted in uniformly missing intervals of length $0.2$ within $[0.1,0.9]$. The functions used in this simulation setting were $f(X_j)=X_j$ and $f(X_j)=\exp(X_j)$ (referred henceforth as the identity and exponential case, respectively).

We compared our approach to several classical curve reconstruction techniques. 
More precisely, the competitors considered were:
\begin{itemize}
\item spline smoothing (using $9$ basis splines of order $2$);
\item Fourier smoothing (using $13$ basis elements); and
\item linear interpolation.
\end{itemize}
Note that, for spline and Fourier smoothing, the number of basis elements were based on cross-validation. 
For our approach, we consider three scenarios. 
In the first two, the parametric family of the Gaussian processes are known. 
The first scenario assumes that both $f$ and $H$ are fixed, so that, we can use Theorem~\ref{thm:discrete} as a baseline. 
In the second, we assume that $f$ is known. 
The Hurst index $H$ is estimated as in~\cite{Coeurjolly2000} based on the observed data points and used in the estimation of the covariance matrix in step (ii). 
Finally, the last scenario uses the algorithm detailed in Section 3, without assuming neither knowledge on $f$, $H$ nor the process distribution. 

The following figures display boxplots of the losses $L(Y_{j,N},\hat Y_{j,N})$ for each simulation setting and for each value of $H\in\{0.2,0.5,0.8\}$. The cases $f(X)=X$ and $f(X)=\exp(X)$ are displayed in Figures~\ref{fig:section3} and~\ref{fig:section3exp}, respectively.
The predictions are displayed as follows: spline smoothing (S9) in red, Fourier smoothing (F13) in green, linear interpolation (I) in yellow and our proposals in different hues of blue. 
The case where the Gaussian process is fully known or when $H$ is estimated are denoted by (G) and (GH) respectively. 
The cases where the underlying processes are unknown and where $f$ is estimated are denoted by (IdH) and (ExpH) (in the identity and exponential case, respectively). 

Simulation results clearly highlight the optimality of the proposed reconstruction. 
Remarkably, the cost of not knowing $H$ seems to remain negligible. 
In the identity case, the estimation of $f$ does not change the precision much. 
The case $f=\exp$ seems to be slightly more unstable, as the function has to be estimated pointwise. 
Note that, the algorithm still provides competitive estimation of the missing parts. 
Naturally, if the type of the underlying parametric process is known, it is advisable to avoid estimating the full function $f$ and restrict to the estimation of the model parameters. 
In our examples, larger values of the Hurst index $H$, corresponding to processes with stronger memory, yield more accurate reconstruction for all techniques. 
In the cases where the optimal reconstruction is the linear interpolation (simulation settings 1 and 4, for $H=0.5$ and $f=\textrm{id}$), our approach and the linear interpolation indeed coincide. 
In all other cases, there is a clear gain in using the optimal approach. 
The spline and Fourier smoothing do poorly in general, and get worse for lower $H$ values. 
This is explained as these interpolations are known to accurately approximate functions only in their observed parts and lower $H$ values provide rougher curves. 

%%%%%%%%%%%%%%%%
\subsection{Real data illustration}
%%%%%%%%%%%%%%%%

\changes{Next, we illustrate the curve reconstruction on real data. 
We analyze the returns $Y=(Y_1,\dots,Y_T)$, of the S$\&$P500 index from February 1st, 2015 to February 1st, 2017. 
Without loss of generality, data was standardized so that the initial value is set at $0$ and has globally empirical variance $1$. 
Likewise, time was coded in such a way that the data is observed on the interval $[0,1]$.
The final dataset contains $T=542$ such returns. 
}

\changes{
For $R=100$ replications, we considered $\Delta_r=[0,t_{U,r}]\cup[t_{L,r},1]$, with $t_{U,r}$ selected uniformly at random in $[0.1,0.7]$ and $t_{L,r}=t_{U,r}+0.2$ ($r=1,\dots,R$). 
We compare the discretized $L^2$ losses $L(Y,\hat Y_{r})$ of the same reconstruction methods as in Section~\ref{subsec:simulations}, namely: (i) interpolation, (ii) spline smoothing, (iii) Fourier smoothing and (iv) the method proposed in this paper.
We explore reconstruction under the parametric assumption that the data comes from each of the four simulation settings described in the previous section.
Note that the use of fractional processes to study the S$\&$P500 index has been discussed in the literature. See, for example, \cite{DomRiv2011,Gat2018}.
Remark also that estimating the full model (as in Section 3, with a nonparametric link function $f$) is not possible given that we only have one functional observation.
Both smoothing methods were optimized by crossvalidation (over the number of basis elements ($k\in\{5,\dots,25\}$) and, in the first case, the order of the splines ($\ell\in\{2,\dots,6\}$)). The resulting choices are $k=11$ and $\ell=4$, and $k=11$ for the spline and Fourier cases, respectively.
One example of such reconstructions is available in Figure~\ref{fig:SP500} below. 
Table~\ref{table:SP500} provides the average losses over the $R$ replications. 
In particular, it shows that all parametric reconstructions perform better than smoothing. In particular, the optimal reconstruction under bifractional Brownian motions (with $K=3/4$) improves the interpolation loss by $20\%$.
}

\begin{figure}[h!]
  \includegraphics[width=0.8\textwidth]{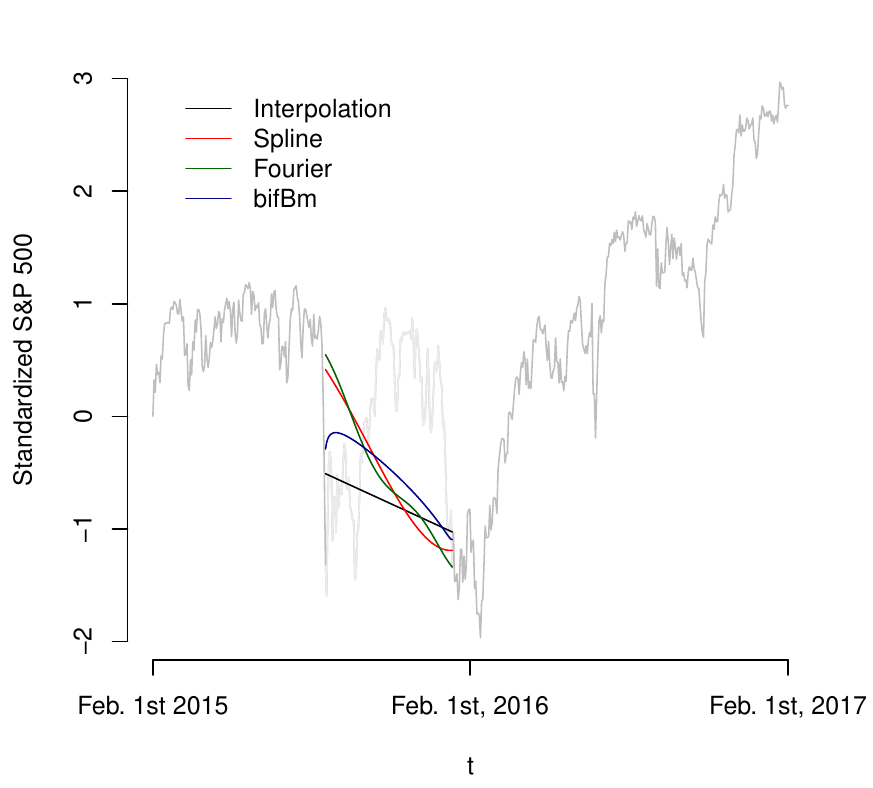}
  \caption{\changes{Standardized S$\&$P500 data. Four reconstructions are provided (black: interpolation, red: spline, green: Fourier, blue: optimal reconstruction for a bifractional Brownian motion ($K=3/4$). The missing period ranges from Aug. 24th 2015 to Jan. 13th, 2016. In this particular case, the losses are, respectively, $1.114$, $1.336$, $1.315$ and $0.893$.}}
\label{fig:SP500}
\end{figure}

\begin{table}[hbt]
\caption{\changes{Average $L^2$ loss values over $R=100$ replications of various reconstruction methods (see text).}}
\begin{tabular}{c|c|c|c|c|c|c}
\label{table:SP500}
   Interpolation & Spline & Fourier & Fractional & Bifractional & Bifractional & Independent mixed  \\
&&&&($K=2/3$)&($K=3/4$)&\\
    \hline
  $0.7582$  & $1.4771$ & $3.8259$ & $0.677$ & $0.6412$ & $0.6017$ & $0.6763$
  \end{tabular}
\end{table}

\begin{figure}[h!]
  \includegraphics[width=0.9\textwidth]{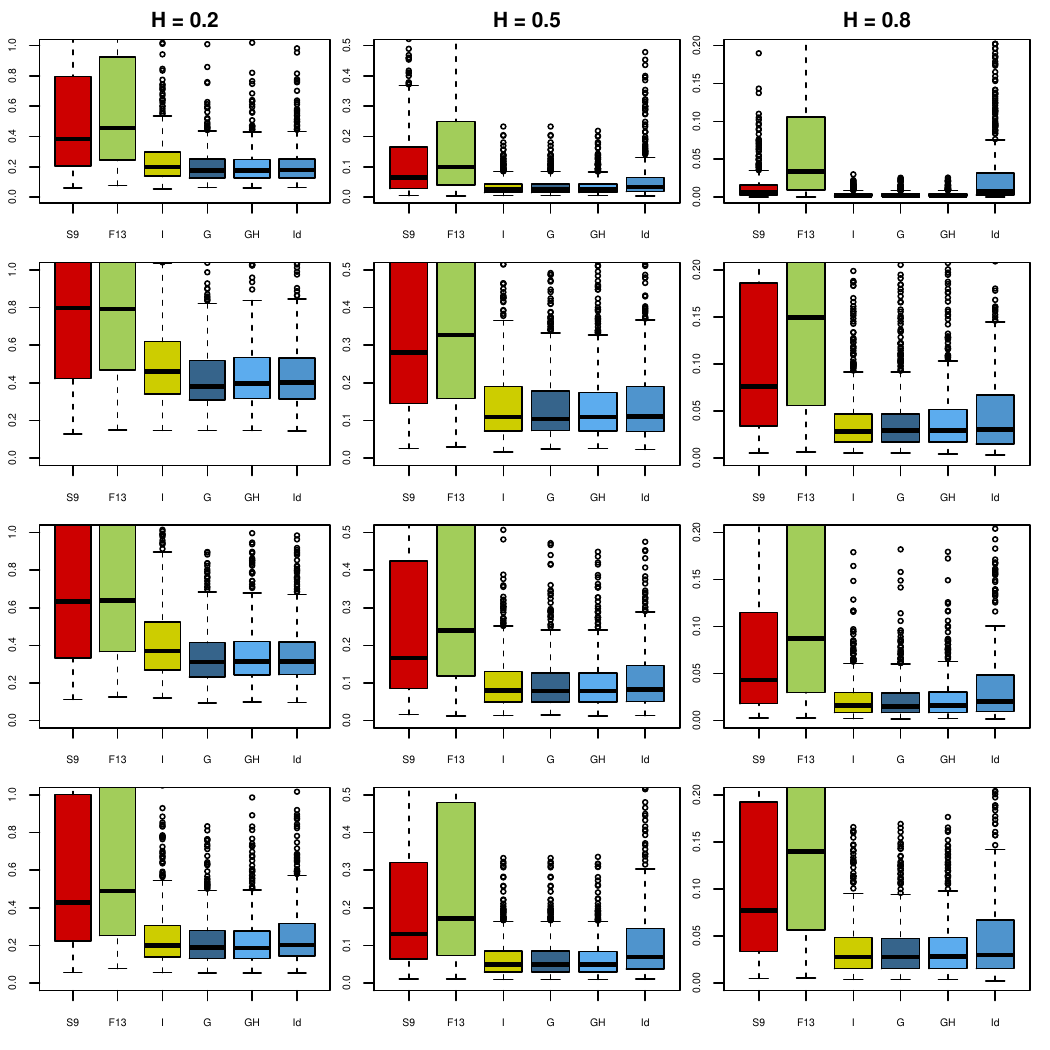}
  \caption{Boxplots of the $L^2$ loss, for $f=\textrm{id}$, for the four simulations settings described above (one per row), for $H=0.2$, $0.5$ or $0.8$ (per column). The reconstruction methods are, from left to right: (red) spline smoothing using 9 basis elements of order 2 (S9); (green) Fourier smoothing using 13 basis elements (F13); (yellow) the reconstruction obtained by linearly interpolating $X_{t_U}$ to $X_{t_L}$; (blue) our proposal assuming the process is Gaussian with the right $H$ value (left, G), with an estimated $H$ (middle, GH) or with an estimated $f$ and $H$ (right, IdH).}
\label{fig:section3}
\end{figure}

\begin{figure}[h!]
  \includegraphics[width=0.9\textwidth]{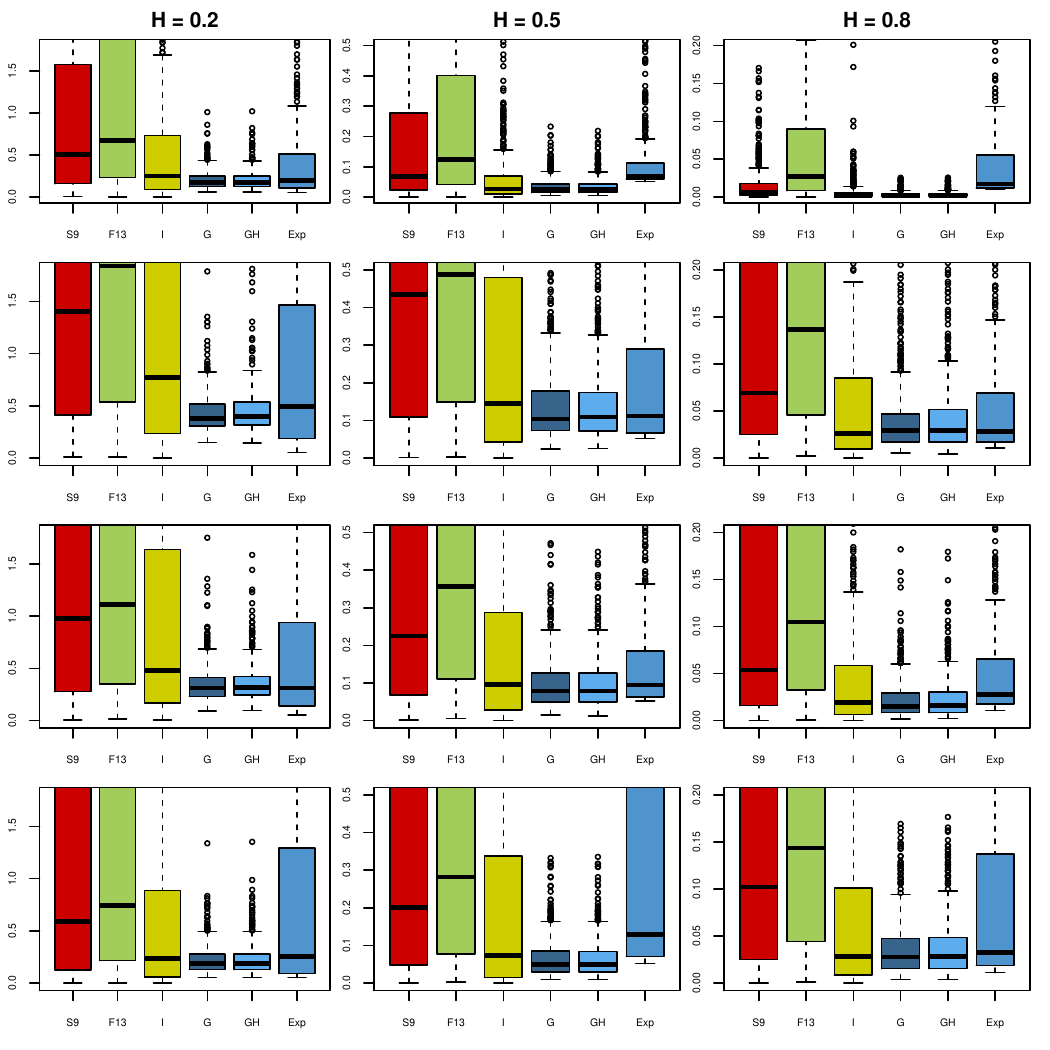}
  \caption{Boxplots of the $L^2$ loss, for $f=\exp$, for the four simulations settings described above (one per row), for $H=0.2$, $0.5$ or $0.8$ (per column). The reconstruction methods are, from left to right: (red) spline smoothing using 9 basis elements of order 2 (S9); (green) Fourier smoothing using 13 basis elements (F13); (yellow) the reconstruction obtained by linearly interpolating $X_{t_U}$ to $X_{t_L}$; (blue) our proposal assuming the process is Gaussian with the right $H$ value (left, G), with an estimated $H$ (middle, GH) or with an estimated $f$ and $H$ (right, IdH).}
\label{fig:section3exp}
\end{figure}

%%%%%%%%%%%%%%%%%%%%%%%%%%%%%%%%%%%%%%%%%%
\appendix 
\section{Preliminaries on Gaussian analysis}
\label{sec:appendix-Gaussian}
%%%%%%%%%%%%%%%%%%%%%%%%%%%%%%%%%%%%%%%%%%

In this section we recall some preliminaries on Gaussian analysis and processes. For details on the topic, we refer to \cite{Janson-1997,Nualart-2006}; and \cite{Sottinen-Viitasaari-2016b}.

Suppose that $X$ is a centered and separable Gaussian process on some compact interval $[0,T]$. It is known \cite{Sottinen-Viitasaari-2016b} that if the covariance $R$ of $X$ is of trace class, i.e. $R(t,t) = Var(X_t) \in L^2([0,T])$, then $X$ admits a Fredholm representation \eqref{eq:fredholm}
\begin{equation*}
X_t = \int_0^T K(t,s)dW_s,
\end{equation*} 
where $W$ is a standard Brownian motion and $K \in L^2([0,T]^2)$. In particular, this is the case whenever $X$ is continuous.
\begin{dfn}[Operator associated with a kernel]\label{dfn:associated}
The associated operator to a kernel $\Gamma\in L^2([0,T]^2)$, also denoted by $\Gamma$, is defined as $\Gamma:L^2([0,T])\rightarrow L^2([0,T])$ with
\begin{equation}
\label{eq:associated-operator}
(\Gamma f)(t) = \int_0^T f(s)\Gamma(t,s)\, ds. 
\end{equation} 
\end{dfn}

\begin{dfn}[Isonormal process]\label{dfn:isonormal}
The isonormal process associated with $X$, also denoted by $X$, is the Gaussian family $\{X(h), h\in\H\}$, where the Hilbert space $\H=\H(R)$ is the closed span of $\{\1_t := \1_{[0,t)},\ t\le T\}$, closed for the inner product on $\mathcal{H}$ defined via $\la \1_t,\1_s\ra_{\H} := R(t,s)$.  
\end{dfn}
By Definition \ref{dfn:isonormal}, $X(h)$ can be viewed as the image of $h\in\H$ in the isometry that extends the relation
$$
X\left(\1_t\right) := X_t
$$
linearly. This allows us to define the Wiener integral with respect to $X$.

\begin{dfn}[Wiener integral]\label{dfn:wi}
$X(h)$ is the \emph{Wiener integral} of the element $h\in\H$ with respect to $X$, which is denoted by
$$
\int_0^1 h(t)\, dX_t := X(h).
$$   
\end{dfn}
\begin{rem}
%Due to the completion under the inner product $\la\cdot,\cdot\ra_{\H}$ 
Note that, it may happen that the space $\H$ is not a space of functions, but instead it has to be considered as a space of generalised functions, see~\cite{Pipiras-Taqqu-2001}. 
\end{rem}
\begin{dfn}\label{dfn:adjoint-associated}
The adjoint associated operator $\Gamma^*$ of a kernel $\Gamma\in L^2([0,T]^2)$ is defined by linearly extending the relation
$$
\Gamma^*\1_t = \Gamma(t,\cdot).
$$
\end{dfn}
\begin{exa}
Let $h$ be a step function of the form
$$
h(s) = \sum_{k=1}^n \alpha_k \1_{(t_{k-1},t_k]},
$$
where $\alpha_1,\ldots,\alpha_n \in \R$ and $0\leq t_0 < t_1<\ldots t_n\leq T$. Then
$$
\Gamma^* h(s) = \int_0^T h(t)\Gamma(dt,s).
$$ 
\end{exa}
Definition \ref{dfn:adjoint-associated} allows us to provide transfer principle for Wiener integrals. For details, see \cite{Sottinen-Viitasaari-2016b}.
\begin{thm}[Transfer principle for Wiener integrals]\label{thm:transfer-principle-wi}
Let $X$ be a separable centered Gaussian process with representation \eqref{eq:fredholm} and let $h\in \H$. Then,
$$
\int_0^T h(t)\, dX_t = \int_0^T K^* h (t)\, dW_t.
$$
\end{thm}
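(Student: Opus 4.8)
The plan is to prove the Transfer Principle for Wiener integrals, Theorem~\ref{thm:transfer-principle-wi}, by the standard density argument: first establish the identity on a dense class of elementary integrands where everything can be computed by hand, then extend by linearity and isometry to all of $\mathcal{H}$. The key structural fact that makes everything work is that the map $h\mapsto \int_0^T h(t)\,dX_t = X(h)$ is, by Definition~\ref{dfn:isonormal}, an isometry from $(\mathcal{H},\langle\cdot,\cdot\rangle_{\mathcal{H}})$ into $L^2(\Omega)$, and likewise $g\mapsto\int_0^T g(t)\,dW_t$ is an isometry from $L^2([0,T])$ into $L^2(\Omega)$; so it suffices to check (a) the algebraic identity on indicators $\mathbf{1}_t$ and (b) that $K^*$ acts as an isometry from $\mathcal{H}$ into $L^2([0,T])$, i.e. $\langle \mathbf{1}_t,\mathbf{1}_s\rangle_{\mathcal{H}} = \langle K^*\mathbf{1}_t, K^*\mathbf{1}_s\rangle_{L^2([0,T])}$.

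First I would take $h = \mathbf{1}_t = \mathbf{1}_{[0,t)}$. On the left side, $\int_0^T \mathbf{1}_t(u)\,dX_u = X(\mathbf{1}_t) = X_t$ by the defining relation of the isonormal process. On the right side, by Definition~\ref{dfn:adjoint-associated} we have $K^*\mathbf{1}_t = K(t,\cdot)$, so $\int_0^T (K^*\mathbf{1}_t)(u)\,dW_u = \int_0^T K(t,u)\,dW_u$, which equals $X_t$ by the Fredholm representation \eqref{eq:fredholm} (valid as an equality under Assumption~\ref{assu:standing}, or in law otherwise, which is enough for an $L^2(\Omega)$ identity). Hence the claimed equality holds for $h=\mathbf{1}_t$, and by linearity of both $X(\cdot)$ and the Wiener integral with respect to $W$, it holds for all finite linear combinations $h=\sum_k \alpha_k\mathbf{1}_{t_k}$, i.e. for step functions of the form appearing in the Example preceding the theorem.

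Next I would upgrade from step functions to general $h\in\mathcal{H}$ by continuity. For a step function $h$, combining the two isometries with the identity just proved gives $\|K^*h\|_{L^2([0,T])}^2 = \E[(\int_0^T K^*h\,dW)^2] = \E[(\int_0^T h\,dX)^2] = \|h\|_{\mathcal{H}}^2$; since step functions of this kind are dense in $\mathcal{H}$ by Definition~\ref{dfn:isonormal} (it is the closed span of the $\mathbf{1}_t$), $K^*$ extends uniquely to an isometry $\mathcal{H}\to L^2([0,T])$. Now take any $h\in\mathcal{H}$ and a sequence of step functions $h_n\to h$ in $\mathcal{H}$. Then $\int_0^T h_n\,dX = X(h_n)\to X(h) = \int_0^T h\,dX$ in $L^2(\Omega)$ by the isometry for $X$; simultaneously $K^*h_n\to K^*h$ in $L^2([0,T])$, so $\int_0^T K^*h_n\,dW\to \int_0^T K^*h\,dW$ in $L^2(\Omega)$ by the isometry for $W$. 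Since $\int_0^T h_n\,dX = \int_0^T K^*h_n\,dW$ for every $n$, passing to the limit yields $\int_0^T h\,dX = \int_0^T K^*h\,dW$, which is the assertion.

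The main obstacle is not any single computation but rather the bookkeeping around the possibly generalized nature of $\mathcal{H}$: as noted in the remark after Definition~\ref{dfn:wi}, $\mathcal{H}$ need not be a genuine function space, so one must be careful that $K^*h$ is interpreted via the extension-by-isometry just constructed rather than by a naive pointwise integral formula, and that the pairing $\langle \mathbf{1}_t,\mathbf{1}_s\rangle_{\mathcal{H}}=R(t,s)$ is indeed reproduced by $\langle K(t,\cdot),K(s,\cdot)\rangle_{L^2}$, which is exactly the content of $R(t,s)=\int_0^T K(t,u)K(s,u)\,du$ coming from \eqref{eq:fredholm}. Once this identification is in place the density/isometry argument is routine; a full account is in \cite{Sottinen-Viitasaari-2016b}.
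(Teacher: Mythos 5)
Your proof is correct and is precisely the standard density-and-isometry argument; the paper itself gives no proof of this theorem and simply defers to \cite{Sottinen-Viitasaari-2016b}, where the same argument (check on indicators $\1_t$ via $K^*\1_t=K(t,\cdot)$, verify $\langle \1_t,\1_s\rangle_\H=\int_0^T K(t,u)K(s,u)\,du$, extend by linearity and isometric density) is carried out. One small caution: your parenthetical that a representation holding only in law ``is enough for an $L^2(\Omega)$ identity'' is not quite right --- for the two Wiener integrals to coincide as random variables rather than merely in distribution, \eqref{eq:fredholm} must hold exactly on the same probability space, as it does under Assumption~\ref{assu:standing}.
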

Finally, we recall two important spaces related to a Gaussian process $X$.
\begin{dfn}
\label{def:first-chaos}
The first chaos of $X$, denoted by $H^X =H^X_T$, is defined as the Hilbert space spanned by $X_s,s\in[0,T]$ and closed for the $L^2(\Omega)$-distance. 
Similarly, $H^X_t$ denotes the Hilbert space spanned by $X_s,s\in[0,t]$.
\end{dfn}
\begin{dfn}[Cameron-Martin space]
The Cameron-Martin space of $X$, denoted by $CM_X$, is the space of functions defined by
$$
f(t) = \E[X_t \xi],\quad \xi\in H^X.
$$
\end{dfn}
The Cameron-Martin space is a Hilbert space when equipped with the inner product
$$
\langle f_1,f_2\rangle_{CM_X} = \E[\xi_1\xi_2],
$$
where $f_i(\cdot)=\E[X_\cdot \xi_i],i=1,2$. 
Moreover, all the spaces $\mathcal{H}$, $H^X$, and $CM_X$ are isometric and separable whenever $X$ is separable. 
More specifically, the isometry between $\mathcal{H}$ and $H^X$ is given by the relation 
$$
\int_0^T h(t)dX_t \in H^X,\quad h \in \mathcal{H}.
$$ 
Note that the Cameron-Martin space is also known as a Reproducing Kernel Hilbert Space (RKHS). 
In this instance, the covariance is the reproducing kernel, i.e., is defined via $f(t) = \langle f, R(t,.) \rangle_{RKHS}$.

%%%%%%%%%%%%%%%%%%%%%%%%%%%%%%%%%%%%%%%%%%
\section{Proofs}
\label{sec:proofs}
%%%%%%%%%%%%%%%%%%%%%%%%%%%%%%%%%%%%%%%%%%
\subsection{Proofs related to Section \ref{sec:Gaussian-prediction}}
For a linear operator $A:X\mapsto Y$, where $X$ and $Y$ are some vector spaces, the \emph{null space} and \emph{range} are defined by
$$
\mathcal{N}(A) = \{x \in X: Ax = 0\} \subset X
\quad \textrm{ and }\quad 
\mathcal{R}(A) = \{y \in Y: Ax = y \text{ for some }x\in X\} \subset Y.
$$
If $\mathcal{N}(A) =\{0\}$, then the inverse operator $A^{-1}:\mathcal{R}(A) \mapsto X$ exists on $\mathcal{R}(A)$. In particular, if $\mathcal{R}(A) = Y$, then the inverse $A^{-1}$ exists on the entire domain $Y$. The following non-trivial result, originally due to Banach, is applied to prove Theorem \ref{thm:K-invertible}.
\begin{thm}
\label{thm:inverse-operator-bounded}
Let $A:X\mapsto Y$ be a bounded linear operator such that $\mathcal{N}(A) =\{0\}$ and $\mathcal{R}(A) = Y$. Then the inverse operator $A^{-1}$ is linear and bounded as well.
\end{thm}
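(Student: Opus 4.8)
The plan is to recognise the final statement as the \emph{bounded inverse theorem} and to derive it from the \emph{open mapping theorem}, working (as is implicit in the applications, where $X$ and $Y$ are $L^2$-spaces) in the setting of Banach spaces.

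First I would dispatch the linearity of $A^{-1}$, which is purely formal: for $y_1,y_2\in Y$ and scalars $\alpha,\beta$, the hypotheses $\mathcal{R}(A)=Y$ and $\mathcal{N}(A)=\{0\}$ provide unique $x_i=A^{-1}y_i$ with $Ax_i=y_i$, and applying $A$ to $\alpha x_1+\beta x_2$ shows $A^{-1}(\alpha y_1+\beta y_2)=\alpha A^{-1}y_1+\beta A^{-1}y_2$.

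The real work is the boundedness of $A^{-1}$, which is equivalent to $A$ being an open map, i.e.\ to the existence of $\delta>0$ with $B_Y(0,\delta)\subseteq A(B_X(0,1))$. To produce such a $\delta$ I would argue in two stages. In the first stage one uses surjectivity together with the Baire category theorem: writing $Y=\bigcup_{n\ge1}\overline{A(B_X(0,n))}$ and using completeness of $Y$, some term has nonempty interior, and a symmetrisation/dilation argument upgrades this to $B_Y(0,2\delta)\subseteq\overline{A(B_X(0,1))}$ for a suitable $\delta>0$. The second stage — which I expect to be the crux — removes the closure using completeness of $X$: starting from $\|y\|<\delta$, one inductively chooses $x_k$ with $\|x_k\|<2^{-k}$ and residuals $\|y-\sum_{j\le k}Ax_j\|<\delta 2^{-k}$; the series $\sum_k x_k$ then converges in $X$ to some $x$ with $\|x\|<1$, and continuity of $A$ forces $Ax=y$. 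This establishes $B_Y(0,\delta)\subseteq A(B_X(0,1))$.

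To conclude, I would translate this inclusion into the operator bound: given $y\neq0$, the vector $\frac{\delta}{2\|y\|}y$ lies in $B_Y(0,\delta)$ and hence equals $Ax$ for some $x$ with $\|x\|<1$; rescaling gives $\|A^{-1}y\|=\frac{2\|y\|}{\delta}\|x\|<\frac{2}{\delta}\|y\|$, so $A^{-1}$ is bounded with $\|A^{-1}\|\le 2/\delta$. The one subtlety worth flagging is that the statement genuinely requires $X$ and $Y$ to be complete: completeness of $Y$ feeds the Baire argument and completeness of $X$ is what makes the telescoping series converge, and without it the conclusion may fail.
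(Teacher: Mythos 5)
Your argument is correct: it is the standard proof of Banach's bounded inverse theorem, obtained by first proving the open mapping property (Baire category on $Y$ to get $B_Y(0,2\delta)\subseteq\overline{A(B_X(0,1))}$, then the telescoping-series argument using completeness of $X$ to remove the closure) and then translating $B_Y(0,\delta)\subseteq A(B_X(0,1))$ into the bound $\|A^{-1}\|\le 2/\delta$. There is nothing to compare against in the paper itself: Theorem~\ref{thm:inverse-operator-bounded} is quoted there as a classical result ``originally due to Banach'' and no proof is supplied, so your write-up simply fills in the canonical argument. Your closing caveat is the one substantive point worth making: the theorem as displayed does not state that $X$ and $Y$ are complete (the surrounding text even speaks loosely of ``vector spaces''), yet completeness of $Y$ is what feeds the Baire step and completeness of $X$ is what makes $\sum_k x_k$ converge, and the conclusion can fail without it. This is harmless for the paper's application, where the theorem is invoked with $X=L^2([0,T])$ and $Y=CM_X$, both Hilbert (hence Banach) spaces, but the hypothesis should be understood as implicitly present.
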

\begin{cor}
\label{cor:operator-solution}
Let $A:X\mapsto Y$ be a bounded linear operator such that $\mathcal{N}(A) =\{0\}$ and $\mathcal{R}(A) = Y$. Then for any $y\in Y$ the equation 
$$
Ax=y
$$
has a unique solution $x\in X$.
\end{cor}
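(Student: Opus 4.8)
The plan is to obtain this as an immediate consequence of Theorem~\ref{thm:inverse-operator-bounded}. First I would note that the hypotheses of that theorem are exactly the ones assumed here: $A$ is bounded and linear, $\mathcal{N}(A)=\{0\}$ and $\mathcal{R}(A)=Y$. Applying Theorem~\ref{thm:inverse-operator-bounded} therefore yields that the inverse operator $A^{-1}:Y\to X$ exists, is linear, and is bounded on all of $Y$. In particular, $A^{-1}y$ is well-defined for every $y\in Y$.

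Next I would establish existence: for a given $y\in Y$, set $x:=A^{-1}y$. Then $Ax=AA^{-1}y=y$, so $x$ is a solution of $Ax=y$. (In fact existence alone does not even require Theorem~\ref{thm:inverse-operator-bounded}: it follows directly from $\mathcal{R}(A)=Y$, which by definition says that every $y\in Y$ is of the form $Ax$ for some $x\in X$.)

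Finally I would establish uniqueness using injectivity. Suppose $x_1,x_2\in X$ both satisfy $Ax_1=y$ and $Ax_2=y$. By linearity of $A$ we get $A(x_1-x_2)=Ax_1-Ax_2=0$, hence $x_1-x_2\in\mathcal{N}(A)=\{0\}$, which forces $x_1=x_2$. Combining the two parts, the equation $Ax=y$ has exactly one solution, namely $x=A^{-1}y$.

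There is no substantive obstacle here: the whole content of the corollary is packaged into Theorem~\ref{thm:inverse-operator-bounded} (the bounded inverse theorem, due to Banach), which is quoted as a known result; given it, the argument is a two-line verification. The only point worth stating explicitly is that, for the conclusion \emph{as phrased} — existence and uniqueness of the solution, without any quantitative control — one merely needs surjectivity for existence and injectivity for uniqueness, so Theorem~\ref{thm:inverse-operator-bounded} is invoked mainly to record that the solution map $y\mapsto x$ is itself bounded, which is what makes the corollary useful for the integral equations in Section~\ref{sec:Gaussian-prediction}.
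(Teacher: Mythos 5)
Your argument is correct and matches the paper's treatment: the paper states this corollary without proof as an immediate consequence of Theorem~\ref{thm:inverse-operator-bounded}, and your existence-via-surjectivity, uniqueness-via-trivial-null-space verification is exactly the intended (routine) justification.
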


In order to prove Theorem~\ref{thm:Gaussian-fredholm-prediction}, we need the following preliminary results. 

\begin{thm}
\label{thm:K-invertible}
Let $X$ be given by \eqref{eq:fredholm} and suppose further that $H^X_T = H^W_T$. 
Then, the associated operator $K: L^2([0,T]) \mapsto L^2([0,T])$ satisfies $\mathcal{N}(K) = \{0\}$ and $\mathcal{R}(K) = CM_X \subset L^2([0,T])$. In particular, $K:L^2([0,T]) \mapsto CM_X$ is a bounded linear operator that has a bounded and linear inverse $K^{-1}:CM_X \mapsto L^2([0,T])$.  
\end{thm}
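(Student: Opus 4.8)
The plan is to establish the three claims about the associated operator $K$ in turn: first $\mathcal{N}(K) = \{0\}$, then $\mathcal{R}(K) = CM_X$, and finally that $CM_X \subset L^2([0,T])$, after which the existence and boundedness of $K^{-1}$ follow immediately from Theorem~\ref{thm:inverse-operator-bounded}.

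For the range identification, the natural route is through the transfer principle (Theorem~\ref{thm:transfer-principle-wi}) and the adjoint associated operator $K^*$ of Definition~\ref{dfn:adjoint-associated}. The key observation is that for $h \in \mathcal{H}$, the Wiener integral $\int_0^T h(t)\,dX_t = \int_0^T (K^*h)(t)\,dW_t$, and since $H^X_T = H^W_T$, the map $h \mapsto K^*h$ is an isometry of $\mathcal{H}$ onto $L^2([0,T])$ (both being isometric to the first chaos). Now the defining relation for the Cameron--Martin space says $g \in CM_X$ iff $g(t) = \E[X_t \xi]$ for some $\xi \in H^X$; writing $\xi = \int_0^T \varphi(s)\,dW_s$ with $\varphi \in L^2([0,T])$ (possible because $H^X_T = H^W_T$), one computes $g(t) = \E[X_t \xi] = \int_0^T K(t,s)\varphi(s)\,ds = (K\varphi)(t)$, which shows $CM_X \subseteq \mathcal{R}(K)$. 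The reverse inclusion $\mathcal{R}(K) \subseteq CM_X$ is the same computation read backwards: for $\varphi \in L^2([0,T])$, set $\xi = \int_0^T \varphi\,dW \in H^W_T = H^X_T$, and then $(K\varphi)(t) = \E[X_t\xi] \in CM_X$. For injectivity, suppose $K\varphi = 0$ in $L^2([0,T])$; then $\E[X_t \int_0^T \varphi\,dW] = (K\varphi)(t) = 0$ for all $t$, so $\int_0^T \varphi\,dW$ is orthogonal to every $X_t$, hence orthogonal to $H^X_T = H^W_T$, hence orthogonal to itself, hence zero; since $\varphi \mapsto \int_0^T \varphi\,dW$ is an isometry $L^2([0,T]) \to H^W_T$, this forces $\varphi = 0$.

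It remains to check $CM_X \subseteq L^2([0,T])$ so that $K$ is genuinely an operator $L^2([0,T]) \to L^2([0,T])$ with full range, and that $K$ is bounded on $L^2([0,T])$. Boundedness is standard: since $K \in L^2([0,T]^2)$, the associated integral operator is Hilbert--Schmidt, hence bounded, with $\|K\|_{\mathrm{op}} \le \|K\|_{L^2([0,T]^2)}$. The inclusion $CM_X \subseteq L^2([0,T])$ then follows from $CM_X = \mathcal{R}(K) = K(L^2([0,T])) \subseteq L^2([0,T])$, using the boundedness just established. With $\mathcal{N}(K) = \{0\}$ and $\mathcal{R}(K) = CM_X$, viewing $K$ as a map onto $CM_X$ we have a bounded linear bijection, so Theorem~\ref{thm:inverse-operator-bounded} (Banach) gives that $K^{-1}: CM_X \to L^2([0,T])$ is bounded and linear.

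The main obstacle I anticipate is bookkeeping around the possibility, flagged in the remark after Definition~\ref{dfn:wi}, that $\mathcal{H}$ may be a space of generalized functions rather than honest functions, so that the identity $K^*h = \int h\,dX$-type manipulations and the isometry $\mathcal{H} \cong L^2([0,T])$ via $K^*$ must be handled at the level of the abstract isonormal construction rather than by naive pointwise integration; one has to be careful that $K^*$ is well-defined on all of $\mathcal{H}$ and that its surjectivity onto $L^2([0,T])$ really is equivalent to $H^X_T = H^W_T$. Everything else — injectivity, the range computation, Hilbert--Schmidt boundedness — is routine once that correspondence is set up cleanly.
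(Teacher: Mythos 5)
Your identification of $\mathcal{N}(K)=\{0\}$ and $\mathcal{R}(K)=CM_X$ follows essentially the same route as the paper: use $H^X_T=H^W_T$ to write every $\xi\in H^X_T$ as $\int_0^T\varphi\,dW$ with $\varphi\in L^2([0,T])$, observe $(K\varphi)(t)=\E[X_t\xi]$, and deduce surjectivity onto $CM_X$ together with injectivity from the Wiener isometry. That part is correct.

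The gap is in the last step. You establish boundedness of $K$ only as an operator $L^2([0,T])\to L^2([0,T])$ (via the Hilbert--Schmidt bound), and then invoke Theorem~\ref{thm:inverse-operator-bounded} for $K:L^2([0,T])\to CM_X$ without specifying which norm $CM_X$ carries. If $CM_X$ carries the subspace norm inherited from $L^2([0,T])$ --- the only norm your argument mentions --- the step fails and the conclusion is in fact false: $K$ is compact (Hilbert--Schmidt) and injective with infinite-dimensional range, so $\mathcal{R}(K)=CM_X$ is not closed in $L^2([0,T])$, hence not a Banach space, so Banach's theorem does not apply; moreover $K^{-1}$ is then necessarily unbounded in that norm (take an orthonormal sequence $e_n$: boundedness of $K^{-1}$ would give $\Vert Ke_n-Ke_m\Vert\geq c\sqrt{2}$, contradicting compactness). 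This unboundedness is exactly the ill-posedness of the first-kind equations noted in Remark~\ref{rem:uniqueness}. What the theorem asserts, and what the paper's proof uses, is that $CM_X$ is equipped with its own Hilbert norm $\langle f_1,f_2\rangle_{CM_X}=\E[\xi_1\xi_2]$ from Appendix~\ref{sec:appendix-Gaussian}. With respect to that norm one has the isometry
\[
\Vert K\varphi\Vert_{CM_X}=\Big\Vert \int_0^T\varphi(s)\,dW_s\Big\Vert_{L^2(\Omega)}=\Vert\varphi\Vert_{L^2([0,T])},
\]
so $K:L^2([0,T])\to CM_X$ is a surjective isometry and the boundedness and linearity of $K^{-1}$ are immediate. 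Your Hilbert--Schmidt estimate is true but is boundedness with respect to the wrong target norm; replace it by the isometry above and the rest of your argument stands.
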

\begin{proof}
The fact that $H^X_T = H^W_T$ ensures that the operator $K^*: \mathcal{H} \mapsto L^2([0,T])$ is bijective. 
Indeed, each $\xi \in H^X_T$ satisfies
$$
\xi = \int_0^T f(s) dX_s = \int_0^T (K^*f)(s)dW_s = \int_0^T g(s)dW_s,
$$
for some $g \in L^2([0,T])$. 
Conversely, each element $\phi \in H^W_T$, with representation
$$
\phi = \int_0^T h(s)dW_s, \quad h \in L^2([0,T]),
$$ 
belongs to $H^X_T$ as well. Thus for each $f\in L^2([0,T])$ there exists $\xi \in H^X_T$ such that
$$
(Kf)(t) = \int_0^T K(t,s)f(s)ds = \E\left[X_t \xi\right].
$$
This gives directly that $\mathcal{R}(K) = CM_X$, and that if $Kf \equiv 0$, then 
$$
\xi = \int_0^T f(s)dW_s = 0 
$$
almost surely, implying $f\equiv 0$. Thus $\mathcal{N}(K) = \{0\}$. Moreover, it is clear from the isometry 
$$
\Vert Kf\Vert_{CM_X} = \Vert f\Vert_{L^2([0,T])} 
$$
that $K:L^2([0,T]) \mapsto CM_X$ is a bounded linear operator. 
Theorem \ref{thm:inverse-operator-bounded} concludes the proof.
\end{proof}
\begin{rem}
Since $K^*$ is invertible, we can transform our prediction into a Wiener integral with respect to $X$ by using
\begin{equation}
\label{eq:wiener-wrt-X}
\hat{X}_t = \int_0^T f_t(s)dW_s = \int_0^T \left(\left(K^*\right)^{-1}f_t\right)(s)dX_s.
\end{equation}
However, even in some well-studied cases such as the fractional Brownian motion, the expression~\eqref{eq:wiener-wrt-X} is impractical as the operators $K^*$ and $\left(K^*\right)^{-1}$ might be complicated.
\end{rem}
\begin{cor}
\label{cor:unique-solution-CM}
Let $X$ be given by \eqref{eq:fredholm} and suppose further that $H^X_T = H^W_T$. 
Then, for any $g\in CM_X$, the integral equation of the first kind
$$
(Kf)(t) = \int_0^T K(t,s)f(s)ds = g(t) 
$$
has a unique solution $f\in L^2([0,T])$.
\end{cor}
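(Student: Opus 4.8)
The plan is to obtain this corollary as an immediate consequence of Theorem~\ref{thm:K-invertible} together with the abstract solvability statement in Corollary~\ref{cor:operator-solution}. The integral equation $(Kf)(t)=\int_0^T K(t,s)f(s)\,ds=g(t)$ is, by Definition~\ref{dfn:associated}, nothing but the operator equation $Kf=g$ for the associated operator $K\colon L^2([0,T])\to L^2([0,T])$, understood as an identity in $L^2([0,T])$.

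First I would invoke Theorem~\ref{thm:K-invertible}, which, under the standing hypothesis $H^X_T=H^W_T$, yields $\mathcal{N}(K)=\{0\}$ and $\mathcal{R}(K)=CM_X\subset L^2([0,T])$, and moreover that $K\colon L^2([0,T])\to CM_X$ is a bounded linear bijection with bounded linear inverse $K^{-1}\colon CM_X\to L^2([0,T])$. Then, for a fixed $g\in CM_X$, I would apply Corollary~\ref{cor:operator-solution} with $A=K$, $X=L^2([0,T])$, and $Y=CM_X$: since $g\in CM_X=\mathcal{R}(K)$, a solution $f\in L^2([0,T])$ exists, and since $\mathcal{N}(K)=\{0\}$, any two solutions $f_1,f_2$ satisfy $K(f_1-f_2)=0$, hence $f_1=f_2$ in $L^2([0,T])$. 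Equivalently, one may simply exhibit the unique solution as $f=K^{-1}g$.

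I do not anticipate any real obstacle, since the corollary merely restates Theorem~\ref{thm:K-invertible} at the level of the integral equation rather than of the operator. The only point deserving a remark is that the equality $(Kf)(t)=g(t)$ must be read as an identity in $L^2([0,T])$ (i.e. for almost every $t$), which is consistent with the identification of $CM_X$ as a subspace of $L^2([0,T])$ used in Theorem~\ref{thm:K-invertible}. With that convention in place, the proof is complete after citing these two earlier results.
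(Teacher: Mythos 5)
Your proposal is correct and follows exactly the paper's own argument: the paper's proof likewise consists of citing Theorem~\ref{thm:K-invertible} together with Corollary~\ref{cor:operator-solution}, which you have simply spelled out in more detail (including the correct reading of the equation as an identity in $L^2([0,T])$).
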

\begin{proof}
The claim follows directly from Theorem \ref{thm:K-invertible} and Corollary \ref{cor:operator-solution}.
\end{proof}
We are now in position to prove our first main theorem.
\begin{proof}[Proof of Theorem \ref{thm:Gaussian-fredholm-prediction}]
Let $t\in [0,T]\setminus \Delta$. 
It is well-known that $\hat{X}_t$ is Gaussian. 
Moreover, since $\hat{X}_t \in H_T^X$, the assumption $H_T^X =H_T^W$ implies that the representation
$$
\hat{X}_t = \int_0^T f_t(s) dW_s
$$
holds. 
By definition of the conditional expectation, which is an orthogonal projection, $\hat{X}_t- X_t$ is orthogonal to all random variables of the form 
\begin{equation}
\label{eq:Y}
Y = \sum_{k=1}^n \alpha_k X_{t_L^k} + \sum_{k=1}^n \beta_k (X_{u_k}-X_{t^k_L}),
\end{equation}
where $u_k\in[t_L^k, t^k_U]$ for all $k=1,\ldots,n$. 
By linearity, it follows that $\hat{X}_t-X_t$ is orthogonal to all Gaussian random variables measurable with respect to $\mathcal{F}_\Delta$, which in turn implies, by Gaussianity, that $\hat{X}_t-X_t$ is orthogonal to $L^2(\Omega,\mathbb{P},\mathcal{F}_\Delta)$. 

Using~\eqref{eq:fredholm} in \eqref{eq:Y} and the fact that $
\hat{X}_t - X_t = \int_0^T f_t(s)-K(t,s)dW_s
$, the orthogonality condition gives
%$$
%Y = \int_0^T \sum_{k=1}^n \left[\alpha_k K(t_L^k,s) + \beta_k\left(K(u_k,s)-K(t_L^k,s)\right)\right]dW_s.
%$$
%Since $
%\hat{X}_t - X_t = \int_0^T f_t(s)-K(t,s)dW_s,
%$ the orthogonality yields 
\begin{equation*}
\begin{split}
0 & = \int_0^T \left[f_t(s)-K(t,s)\right]\sum_{k=1}^n \left[\alpha_k K(t_L^k,s) + \beta_k\left(K(u_k,s)-K(t_L^k,s)\right)\right]ds.
\end{split}
\end{equation*}
As this holds for all $\alpha_k,\beta_k \in \mathbb{R}$, we obtain that, for all $k=1,\ldots,n$ and $t_L^k\leq u_k\leq t_U^k$, 
$$
\int_0^T f_t(s)K(u_k,s)ds = \int_0^T K(t,s)K(u_k,s)ds.
$$
This shows that~\eqref{eq:fredholm-system} holds. 

For any $g_\xi$ such that $\int_0^T g_\xi(s)K(u_k,s)ds = 0$, the same reasoning shows that $\int_0^T g_\xi(s)dW_s$ is orthogonal to $\mathcal{F}_\Delta$. 
The extra condition $\int_0^T g_\xi(s)f_t(s)ds = 0$ ensures that $\int_0^T f_t(s)dW_s$ is measurable with respect to $\mathcal{F}_\Delta$. 
The uniqueness of $f_t$ then follows from the uniqueness of the conditional expectation $\hat{X}_t = \int_0^T f_t(s)dW_s$.
This completes the proof.
%By the above reasoning, we obtain that $\int_0^T g_\xi(s)dW_s$ is orthogonal to $\mathcal{F}_\Delta$ provided that $\int_0^T g_\xi(s)K(u_k,s)ds = 0$ for $t_L^k\leq u_k \leq t_U^k$, and $f_t + g_\xi$ solves \eqref{eq:fredholm-system} whenever $f_t$ solves \eqref{eq:fredholm-system}. We also see that if $f_t$ solves \eqref{eq:fredholm-system} and satisfies $\int_0^T g_\xi(s)f_t(s)ds = 0$ for any such $g_\xi$, then $f_t$ is unique and the conditional expectation is given by $\hat{X}_t = \int_0^T f_t(s)dW_s$. Indeed, $\int_0^T g_\xi(s)f_t(s)ds = 0$ implies that $\int_0^T f_t(s)dW_s$ is measurable with respect to $\mathcal{F}_\Delta$, thus justifying the claim of Remark \ref{rem:uniqueness}.
\end{proof}

\begin{proof}[Proof of Theorem \ref{thm:Gaussian-regular-law}]
It is known that $X|\mathcal{F}_\Delta$ is Gaussian with Gaussian random mean and deterministic covariance given by
$$
\hat{X}_t = \E[X_t | \mathcal{F}_{\Delta}]\quad\textrm{ and }\quad
\rho(t,s|\Delta) = \E\left[(X_t -\hat{X}_t)(X_s-\hat{X}_s)\right],
$$
respectively. 
See, e.g. \cite[Chapter 9]{Janson-1997}.
The mean is provided in Theorem \ref{thm:Gaussian-fredholm-prediction}. Similarly, the representation \eqref{eq:rho} for $\rho$ follows from 
$$
X_t - \hat{X}_t = \int_0^T K(t,u)-f_t(u)dW_u
$$
and the It\^o isometry
$$
\textrm{Cov}\left(\int_0^T f_1(s)dW_s,\int_0^T f_2(s)dW_s\right) = \int_0^T f_1(s)f_2(s)ds.
$$
\end{proof}
\begin{proof}[Proof of Theorem \ref{thm:Gaussian-prediction}]
By Theorem \ref{thm:Gaussian-fredholm-prediction}, for any $1\leq k\leq n$ and $t_L^k\leq u_k\leq t_U^k$, it holds that
$$
\int_0^T f_t(s)K(u_k,s)ds = \int_0^T K(t,s)K(u_k,s)ds.
$$
With the convention $t_U^{0}=0$, we have 
%\begin{equation}
%\label{eq:proof-boundary-eq}
$$
\int_{t_U^{k-1}}^{t_L^k}f_t(s) K(t_L^k,s)ds = \int_0^{t_L^k} K(t,s)K(t_L^k,s)ds - \int_0^{t_U^{k-1}} f_t(s)K(t_L^k,s)ds
$$
%\end{equation}
and
%\begin{equation}
%\label{eq:proof-volterra-eq}
$$
\int_{t_L^k}^{u_k} [f_t(s)-K(t,s)]K(u_k,s)ds = \int_0^{t_L^k} [K(t,s)-f_t(s)]K(u_k,s)ds.
$$
%\end{equation}
The result follows.
\end{proof}
\begin{proof}[Proof of Corollary \ref{cor:one-missing}]
In the setting of this corollary, equations \eqref{eq:thm-volterra-eq} and \eqref{eq:thm-boundary-eq} rewrite
\begin{equation}
\label{eq:proof-1}
\int_0^u \left[f_t(s) - K(t,s)\right]K(u,s)ds = 0,\quad 0\leq u\leq t_U,
\end{equation}
\begin{equation}
\label{eq:proof-2}
\int_{t_U}^{t_L} f_t(s) K(t_L,s)ds = \int_0^{t_L}K(t,s)K(t_L,s)ds - \int_0^{t_U}f_t(s)K(t_L,s)ds,
\end{equation}
and
\begin{equation}
\label{eq:proof-3}
\int_{t_L}^u \left[f_t(s)-K(t,s)\right]K(u,s)ds = \int_0^{t_L} \left[K(t,s)-f_t(s)\right]K(u,s)ds,\quad t_L\leq u \leq T.
\end{equation}
The solution to \eqref{eq:proof-1} is given by $f_t(s) = K(t,s),s\leq t_U$. 
It is unique since $H^X_t = H^W_t$ allows to use Corollary \ref{cor:unique-solution-CM}. \eqref{eq:proof-2} therefore rewrites
$$
\int_{t_U}^{t_L} f_t(s) K(t_L,s)ds = \int_0^{t_U}K(t,s)K(t_L,s)ds.
$$
Let $f_t(s) = \tilde{f}_t(s) + c(t)K(t_L,s)$. 
It follows from \eqref{eq:c} that
$$
\int_{t_U}^{t_L}\tilde{f}_t(s)K(t_L,s)ds = 0
$$
and since $H_t^X = H_t^W$, we must have $\tilde{f}_t(s) = 0$ for all $t_L<s<t_U$. Indeed, otherwise $\int_0^Tf_t(s)dW_s \in H_s^W\setminus H_{t_L}^W= H_s^X\setminus H_{t_L}^X$ for some $t_L<s<t_U$, and hence $\hat{X}_t$ would not be measurable. Finally, \eqref{eq:int-eq} follows by plugging $f_t(s)$ for $s<t_L$ into \eqref{eq:proof-3}.
\end{proof}
\begin{proof}[Proof of Corollary \ref{cor:future-prediction}]
Theorem \ref{thm:Gaussian-fredholm-prediction} implies that $\hat{X}_t = \int_0^T f_t(s)dW_s$, where $f_t$ satisfies, for $0\leq u \leq t_U$,
$$
\int_0^u f_t(s)K(u,s)ds = \int_0^u K(t,s)K(u,s)ds.
$$
Since $H^X_t = H^W_t$ for all $t\in[0,T]$, Corollary \ref{cor:unique-solution-CM} implies that the unique solution is given by $f_t(s) = K(t,s)$.
\end{proof}

\begin{proof}[Proof of Theorem \ref{thm:discrete}]
Let $t\in [0,T]\setminus \Delta_N$. 
As in the proof of Theorem \ref{thm:Gaussian-prediction}, the random variable $\hat{X}_{t,N}$ is Gaussian and measurable with respect to $\Delta_N$. 
Thus, it has the representation \eqref{eq:discrete-predictor} and $\hat{X}_{t,N}-X_t$ is orthogonal to all random variables 
$$
Y = \sum_{t_k\in \Delta_N} \alpha_k X_{t_k},\quad \alpha_k \in \R.
$$
It follows that 
$$
0 = \E\left[(\hat{X}_{t,N}-X_t)Y\right] = \sum_{t_k\in \Delta_N} \alpha_k \E\left[(\hat{X}_{t,N}-X_t)X_{t_k}\right]
$$
and hence
$$
\E\left[(\hat{X}_{t,N}-X_t)X_{t_k}\right] = 0.
$$
Now representation \eqref{eq:discrete-predictor} gives
$$
\E\left[\hat{X}_{t,N}X_{t_k}\right] = \sum_{t_j\in \Delta_N} a_j(t) (\textbf{R}_N)_{jk}
$$
which gives us \eqref{eq:vector-a}. In order to prove \eqref{eq:discrete-predictor-convergence}, we set 
$$
M_N = \E\left[X_t | \mathcal{F}_{\Delta_N}\right].
$$
Since $\mathcal{F}_{\Delta_N} \subset \mathcal{F}_{\Delta_{N+1}}$, it follows that $M_N$ is a martingale (with respect to $\left(\mathcal{F}_{\Delta_N}\right)_{N\geq 1}$). By Jensen's inequality and the fact that $X_t$ has bounded variance, we get
$$
\sup_{N\geq 1}\E M_N^2 \leq \E X_t^2 < \infty. 
$$
Hence $M_N$ is uniformly bounded in $L^2(\Omega)$. Thus we may apply the martingale convergence theorem giving us $M_N \to M_\infty$ almost surely and in $L^2(\Omega)$. 
From this and the fact that $\mathcal{F}_{\Delta_N}$ increases to $\mathcal{F}_\Delta$ it follows that 
$$
M_\infty = \hat{X}_t = \E\left[X_t | \mathcal{F}_\Delta\right].
$$
This concludes the proof.
\end{proof}
\subsection{Proofs related to Section \ref{sec:FDA-prediction}}
\begin{proof}[Proof of Theorem \ref{thm:prediction-FDA}]
Since $x\mapsto f(t,x)$ is bijective for all $t\in[0,T]$, it follows that $\mathcal{F}_\Delta^Y = \mathcal{F}_\Delta^X$. The result now follows by writing
$$
f(t,X_t) = f(t,\hat{X}_t + X_t - \hat{X}_t)
$$
which yields, by the independence of $\hat{X}_t$ and $X_t - \hat{X}_t$ and the fact that $X_t - \hat{X}_t \sim N(0,\rho(t,t|\Delta))$, that
$$
\mathbb{E}\left[f(t,X_t) | \mathcal{F}_\Delta\right] = \int_{-\infty}^\infty f(t,\hat{X}_t + z)\phi_{\rho(t,t|\Delta)}(z)dz,
$$
where $\phi_{\rho(t,t|\Delta)}$ denotes the density function of $N(0,\rho(t,t|\Delta))$. The claim now follows from a change of variable.
\end{proof}
\begin{pro}
\label{pro:proxy-convergence}
Let $t\in \pi_N$ and let ${X}_{t,K}^{\text{prox}}$ be given by \eqref{eq:proxy-def}. Then, for any $p\geq 1$, we have, almost surely and in $L^p(\Omega)$  
\begin{equation}
\label{eq:proxy-converges}
{X}_{t,K}^{\text{prox}} \to X_{t}
\end{equation}
as $|K| \to \infty$. 
\end{pro}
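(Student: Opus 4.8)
The plan is to reduce the statement to the probability integral transform and to classical empirical-process asymptotics. By Proposition~\ref{prop:f-formula} we have $f^{-1}(t,\cdot)(z)=\Phi_t^{-1}(F_t(z))$, hence $X_t=f^{-1}(t,\cdot)(Y_t)=\Phi_t^{-1}\!\big(F_t(Y_t)\big)$, while by~\eqref{eq:proxy-def} one has $X_{t,K}^{\text{prox}}=\Phi_t^{-1}\!\big(\hat F_{t,K}(Y_t)\big)$. So it is enough to prove that $\hat F_{t,K}(Y_t)\to F_t(Y_t)$ in the relevant sense and then push this convergence through $\Phi_t^{-1}$. Two elementary facts make this transfer legitimate: first, $Y_t=f(t,X_t)$ has a continuous law (it is a continuous bijection of a nondegenerate Gaussian, as $\E X_t^2>0$), so $F_t(Y_t)\sim\mathcal U(0,1)$ and in particular $F_t(Y_t)\in(0,1)$ almost surely; second, $\Phi_t^{-1}$, the quantile function of $N(0,\E X_t^2)$, is continuous on the open interval $(0,1)$.

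For the almost sure convergence, I would use that the $Y_t^k$, $k\in K$, are i.i.d.\ copies of $Y_t$, so the Glivenko--Cantelli theorem gives $\sup_z\big|\tfrac1{|K|}\sum_{k\in K}\1_{Y_t^k\le z}-F_t(z)\big|\to0$ a.s.\ as $|K|\to\infty$; replacing $1/|K|$ by $1/(|K|+1)$ changes this quantity by at most $1/(|K|+1)$, so also $\sup_z|\hat F_{t,K}(z)-F_t(z)|\to0$ a.s. In particular $\hat F_{t,K}(Y_t)\to F_t(Y_t)$ a.s., and since $Y_t$ is one of the observations used to form $\hat F_{t,K}$ we have $\hat F_{t,K}(Y_t)\in(0,1)$; continuity of $\Phi_t^{-1}$ at the a.s.-interior point $F_t(Y_t)$ then yields $X_{t,K}^{\text{prox}}\to X_t$ a.s.

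To upgrade this to $L^p$ for every $p\ge1$, the plan is to combine the a.s.\ convergence with uniform integrability of $\{|X_{t,K}^{\text{prox}}|^p\}_K$, which I would obtain by bounding the $(p+1)$-th moment uniformly. Because the $Y_t^k$ are i.i.d.\ with a continuous law, there are a.s.\ no ties, so by exchangeability the rank of $Y_t$ among $\{Y_t^k:k\in K\}$ is uniform on $\{1,\dots,|K|\}$; hence $\hat F_{t,K}(Y_t)$ takes each value $r/(|K|+1)$, $r=1,\dots,|K|$, with probability $1/|K|$, so that
$$
\E\big|X_{t,K}^{\text{prox}}\big|^{p+1}=\frac1{|K|}\sum_{r=1}^{|K|}\Big|\Phi_t^{-1}\!\big(\tfrac r{|K|+1}\big)\Big|^{p+1}.
$$
A routine estimate bounds this uniformly in $|K|$: splitting the sum at $1/2$ and using that $|\Phi_t^{-1}|$ is monotone on each half of $(0,1)$, the interior terms are dominated (by the integral test) by $\int_0^1|\Phi_t^{-1}(u)|^{p+1}\,du<\infty$ --- finite because Gaussians have all moments --- while the two boundary terms are of order $(\log|K|)^{(p+1)/2}/|K|\to0$. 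Thus $\sup_K\E|X_{t,K}^{\text{prox}}|^{p+1}<\infty$, so $\{|X_{t,K}^{\text{prox}}|^p\}_K$ is uniformly integrable; combined with $X_t\in L^p$ and the a.s.\ convergence, Vitali's theorem gives $X_{t,K}^{\text{prox}}\to X_t$ in $L^p$.

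The hard part is really the $L^p$ statement near the endpoints of $(0,1)$: since $\Phi_t^{-1}$ blows up at $0$ and $1$, one must quantify how close $\hat F_{t,K}(Y_t)$ can come to these points, and it is the exact uniform-on-a-grid law of $\hat F_{t,K}(Y_t)$ coming from exchangeability that makes this controllable, reducing the whole matter to integrability of the Gaussian quantile function. The almost sure half is, by contrast, just Glivenko--Cantelli plus the continuity of $\Phi_t^{-1}$ on $(0,1)$.
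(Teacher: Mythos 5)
Your proposal is correct and follows essentially the same route as the paper: almost sure convergence via Glivenko--Cantelli plus continuity of $\Phi_t^{-1}$ on $(0,1)$, and $L^p$ convergence via uniform integrability obtained from a uniform moment bound, where the key identity $\E|X_{t,K}^{\text{prox}}|^{q}=\frac1{|K|}\sum_{r=1}^{|K|}|\Phi_t^{-1}(r/(|K|+1))|^{q}$ is derived in the paper by an explicit binomial--Beta integration and in your write-up by the equivalent (and cleaner) exchangeability/uniform-rank argument, followed by the same split-at-$1/2$ comparison with $\int_0^1|\Phi_t^{-1}(u)|^{q}\,du<\infty$. The only cosmetic difference is that you bound the $(p+1)$-th moment to get uniform integrability of the $p$-th powers, whereas the paper bounds the $p$-th moment for arbitrary $p$ and deduces uniform integrability for all $q<p$.
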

\begin{proof}
For simplicity, we suppress $t$ from the notation and simply write e.g. ${X}^{\text{prox}}_K$ instead of ${X}_{t,K}^{\text{prox}}$. With this simplified notation, 
$$
{X}^{\text{prox}}_K =\Phi^{-1}\left(\hat{F}_{K}(Y)\right).
$$
First we observe that, by the Glivenko-Cantelli theorem and continuity of $\Phi^{-1}$, the convergence \eqref{eq:proxy-converges} holds almost surely. 
To show the $L^p(\Omega)$ consistency, it is sufficient to show that 
$$
\sup_{|K|\geq 1} \E \left|{X}^{\text{prox}}_K\right|^p < \infty.
$$
Indeed, this implies that, for any $q< p$, the family
$$
\left|{X}^{\text{prox}}_K\right|^q,\qquad |K|=1,2,\ldots
$$
is uniformly integrable. 
This implies that \eqref{eq:proxy-converges} holds in $L^q(\Omega)$. Since $p$ is arbitrary, the result would hold. 
%Consequently, it suffices to prove that, for any $p\geq 1$, the family 
%$$
%\left|\tilde{X}^{\text{prox}}_J\right|^p, J=1,2,\ldots
%$$
%is uniformly integrable. Indeed, then it follows that \eqref{eq:proxy-converges} holds in $L^p(\Omega)$ as well. Moreover, for uniform integrability it suffices to prove boundedness in $L^{1+\delta}(\Omega)$ for some $\delta>0$. Thus the claim follows directly once we can prove, for any $p\geq 1$, uniform boundedness in $L^p(\Omega)$, i.e.
%$$
%\sup_{J\geq 1} \E \left|\tilde{X}^{\text{prox}}_J\right|^p < \infty.
%$$

In the sequel, we assume without loss of generality that the bijective function $f$ is strictly increasing (the symmetric case with decreasing $f$ can be tackled in a similar fashion). It holds, for any $j\in K$
$$
\hat{F}_K(Y^j) = \frac{1}{|K|+1}\sum_{k\in K} \1_{Y^k \leq Y^j} = \frac{1}{|K|+1}\sum_{k\in K} \1_{X^k \leq X^j} = \frac{1}{|K|+1} \Big(1+\sum_{k\neq j,k\in K} \1_{X^k \leq X^j}\Big).
$$
By independence
\begin{equation*}
\begin{split}
\E \left|{X}^{\text{prox}}_K\right|^p 
= \int_{\mathbb{R}^{|K|}} \left|\Phi^{-1}\left(\frac{1}{|K|+1} \Big(1+\sum_{k=1,k\neq j}^{|K|} \1_{x_k \leq x_j}\Big)\right)\right|^p d \Phi(x_1)\ldots d\Phi(x_j)\ldots d\Phi(x_{|K|}).
\end{split}
\end{equation*}
By considering the cases where exactly $m=0,1,\ldots, |K|-1$ of the variables are less than $x_j$, integrating first with respect to variables $x_k,k\neq j$, and using change of variable $y = \Phi(x_j)$ we get
\begin{equation*}
\begin{split}
&\int_{\mathbb{R}^{|K|}} \left|\Phi^{-1}\left(\frac{1}{|K|+1} \Big(1+\sum_{k=1,k\neq j}^{|K|} \1_{x_k \leq x_j}\Big)\right)\right|^p d \Phi(x_1)\ldots d\Phi(x_j)\ldots d\Phi(x_|K|)\\
&= \sum_{m=0}^{|K|-1} \binom{|K|-1}{m}\int_{\mathbb{R}} \left|\Phi^{-1}\left(\frac{m+1}{|K|+1}\right)\right|^p \Phi(x_j)^m (1-\Phi(x_j))^{|K|-1-m} d\Phi(x_j) \\
&= \sum_{m=0}^{|K|-1} \binom{|K|-1}{m}\int_0^1 \left|\Phi^{-1}\left(\frac{m+1}{|K|+1}\right)\right|^p y^m (1-y)^{|K|-1-m} dy\\
&= \sum_{m=0}^{|K|-1} \binom{|K|-1}{m} \left|\Phi^{-1}\left(\frac{m+1}{|K|+1}\right)\right|^p B(m+1,|K|-m),
\end{split}
\end{equation*}
where $B(x,y)$ denotes the Beta function. Since
$$
\binom{|K|-1}{m}B(m+1,|K|-m) = \frac{(|K|-1)!}{m!(|K|-m-1)!}\cdot\frac{m!(|K|-m-1)!}{|K|!} = \frac{1}{|K|},
$$
we have
\begin{align*}
%\begin{split}
\sum_{m=0}^{|K|-1} &\binom{|K|-1}{m} \left|\Phi^{-1}\left(\frac{m+1}{|K|+1}\right)\right|^p B(m+1,|K|-m) = \frac{1}{|K|}\sum_{m=0}^{|K|-1} \left|\Phi^{-1}\left(\frac{m+1}{|K|+1}\right)\right|^p\\[2mm]
&= \frac{1}{|K|}\sum_{m\leq \frac{|K|-1}{2}} \left|\Phi^{-1}\left(\frac{m+1}{|K|+1}\right)\right|^p  + \frac{1}{|K|}\sum_{m\geq \frac{|K|-1}{2}} \left|\Phi^{-1}\left(\frac{m+1}{|K|+1}\right)\right|^p=(A)+(B).
%\end{split}
\end{align*}
In (A), $m\leq \frac{|K|-1}{2}$ so that $\frac{m+1}{|K|+1}\leq \frac12$. 
Using the fact that $|\Phi^{-1}(x)|^p$ is decreasing on $x\in\left(0,\frac12\right]$ we get 
$$
\left|\Phi^{-1}\left(\frac{m+1}{|K|+1}\right)\right|^p \leq \left|\Phi^{-1}\left(\frac{x}{|K|+1}\right)\right|^p
$$
for all $x\in [m,m+1]$. Therefore (A) can be bounded as follows.
\begin{equation*}
\begin{split}
&(A) 
= \frac{1}{|K|}\sum_{m\leq \frac{|K|-1}{2}} \int_{m}^{m+1} \left|\Phi^{-1}\left(\frac{m+1}{|K|+1}\right)\right|^p dx\leq \frac{1}{|K|}\sum_{m\leq \frac{|K|-1}{2}} \int_{m}^{m+1} \left|\Phi^{-1}\left(\frac{x}{|K|+1}\right)\right|^p dx\\[2mm]
& =\frac{1}{|K|} \int_{0}^{\frac{|K|+1}{2}} \left|\Phi^{-1}\left(\frac{x}{|K|+1}\right)\right|^p dx\leq \frac{|K|+1}{|K|} \int_0^1 \left|\Phi^{-1}\left(y\right)\right|^p dy\leq 2 \int_{-\infty}^\infty |x|^p d \Phi(x) < \infty.
\end{split}
\end{equation*}
A similar argument holds for (B), where we use that $|\Phi^{-1}(x)|^p$ is increasing on $x\in\left[\frac12,1\right)$ and that, for all $x\in [m+1,m+2]$,
$$
\left|\Phi^{-1}\left(\frac{m+1}{|K|+1}\right)\right|^p \leq \left|\Phi^{-1}\left(\frac{x}{|K|+1}\right)\right|^p.
$$
%\begin{equation*}
%\begin{split}
%&\frac{1}{J}\sum_{m>\frac{J-1}{2}} \left|\Phi^{-1}\left(\frac{m+1}{J+1}\right)\right|^p = \frac{1}{J}\sum_{m>\frac{J-1}{2}} \int_{m+1}^{m+2} \left|\Phi^{-1}\left(\frac{m+1}{J+1}\right)\right|^p dx\\
%&\leq \frac{1}{J}\sum_{m> \frac{J-1}{2}} \int_{m+1}^{m+2} \left|\Phi^{-1}\left(\frac{x}{J+1}\right)\right|^p dx=\frac{1}{J} \int_{\frac{J+1}{2}}^{J+1} \left|\Phi^{-1}\left(\frac{x}{J+1}\right)\right|^p dx\\
%&\leq \frac{J+1}{J} \int_0^1 \left|\Phi^{-1}\left(y\right)\right|^p dy\leq 2 \int_{-\infty}^\infty |x|^p d \Phi(x) < \infty.
%\end{split}
%\end{equation*}
Consequently, for every $p\geq 1$ the family $X^{\text{prox}}_K, |K|=1,2,\ldots$ is uniformly integrable. This concludes the proof.
\end{proof}
\begin{cor}
\label{cor:proxycov-convergence}
Let $t,s\in \pi_N$ be fixed and let $\hat{R}_J(t,s)$ be given by \eqref{eq:cov-proxy-estimation}. Then, for any $p\geq 1$ and as $|J|\to \infty$, we have $\hat{R}_J(t,s) \to R(t,s)$\textcolor{black}{, the covariance function of $X$,} in $L^p(\Omega)$. 
\end{cor}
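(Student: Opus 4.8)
The plan is to split $\hat{R}_J(t,s)$ into the (unobservable) sample covariance of the true latent values plus a proxy-error term, and to treat the two pieces separately. Write
\[
\hat{R}_J(t,s) - R(t,s) = \Bigl(\frac{1}{|J|}\sum_{k\in J} X_t^k X_s^k - R(t,s)\Bigr) + \frac{1}{|J|}\sum_{k\in J}\bigl(X_{t,J}^{k,\text{prox}}X_{s,J}^{k,\text{prox}} - X_t^k X_s^k\bigr).
\]
Since $(X_t,X_s)$ is jointly Gaussian, the i.i.d.\ products $X_t^k X_s^k$ belong to $L^r(\Omega)$ for every $r\ge 1$; hence the sample means $|J|^{-1}\sum_{k\in J} X_t^k X_s^k$ are bounded in $L^r(\Omega)$ uniformly in $|J|$ for each $r$ and converge almost surely to $\E[X_t X_s]=R(t,s)$ by the strong law of large numbers. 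Boundedness in $L^{p+1}$ provides uniform integrability of the $p$-th powers, so this first term tends to $0$ in $L^p(\Omega)$ for every $p\ge 1$. It therefore remains to control the proxy-error term.

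For the proxy-error term I would exploit exchangeability. The observations indexed by $J$ are i.i.d., and the array $\{(X_{t,J}^{k,\text{prox}},X_{s,J}^{k,\text{prox}},X_t^k,X_s^k):k\in J\}$ is obtained from the i.i.d.\ family $\{(Y_t^j,Y_s^j):j\in J\}$ by a map that commutes with permutations of the index set (the empirical distribution functions $\hat{F}_{t,J},\hat{F}_{s,J}$ are symmetric in the sample). Hence this array is exchangeable, so the summands $D_J^k:=X_{t,J}^{k,\text{prox}}X_{s,J}^{k,\text{prox}} - X_t^k X_s^k$ are identically distributed, and the triangle inequality in $L^p(\Omega)$ gives
\[
\Bigl\|\frac{1}{|J|}\sum_{k\in J} D_J^k\Bigr\|_{L^p} \le \frac{1}{|J|}\sum_{k\in J}\|D_J^k\|_{L^p} = \|D_J^1\|_{L^p},
\]
which reduces the whole problem to showing $\|D_J^1\|_{L^p}\to 0$ as $|J|\to\infty$. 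For this I would decompose $D_J^1 = (X_{t,J}^{1,\text{prox}}-X_t^1)X_{s,J}^{1,\text{prox}} + X_t^1(X_{s,J}^{1,\text{prox}}-X_s^1)$ and apply H\"older's inequality with conjugate exponents $(2,2)$ to each product, obtaining
\[
\|D_J^1\|_{L^p} \le \|X_{t,J}^{1,\text{prox}}-X_t^1\|_{L^{2p}}\,\|X_{s,J}^{1,\text{prox}}\|_{L^{2p}} + \|X_t^1\|_{L^{2p}}\,\|X_{s,J}^{1,\text{prox}}-X_s^1\|_{L^{2p}}.
\]
Proposition~\ref{pro:proxy-convergence}, whose proof applies verbatim with the sample indexed by $J$ in place of $K$, yields $\|X_{t,J}^{1,\text{prox}}-X_t^1\|_{L^{2p}}\to 0$ and $\|X_{s,J}^{1,\text{prox}}-X_s^1\|_{L^{2p}}\to 0$; in particular $\|X_{s,J}^{1,\text{prox}}\|_{L^{2p}}$ is bounded in $|J|$, while $\|X_t^1\|_{L^{2p}}<\infty$ by Gaussianity. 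Hence $\|D_J^1\|_{L^p}\to 0$, and combining with the first step gives $\hat{R}_J(t,s)\to R(t,s)$ in $L^p(\Omega)$.

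The main obstacle is the dependence among the summands: $\hat{R}_J(t,s)$ is not a genuine sample covariance, since every term is built from the \emph{same} empirical distribution functions $\hat{F}_{t,J}$ and $\hat{F}_{s,J}$, so one cannot simply invoke a law of large numbers for the proxy products. The exchangeability observation is exactly what neutralises this, collapsing the entire estimate onto the single-coordinate $L^{2p}$ bound already established in Proposition~\ref{pro:proxy-convergence}. (Alternatively one could bound $\E|D_J^1|^p$ directly by means of the same combinatorial identities used in the proof of that proposition, at the price of considerably more bookkeeping.)
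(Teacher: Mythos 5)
Your proof is correct and follows the same route as the paper, which disposes of this corollary in one line by invoking Proposition~\ref{pro:proxy-convergence} together with Gaussianity of $X$. The only difference is that you supply the details the paper leaves implicit --- in particular the exchangeability observation that reduces the dependent average of proxy products to a single-summand $L^{2p}$ bound --- which is exactly the right way to make the paper's ``follows directly'' rigorous.
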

\begin{proof}
The claim follows directly from Proposition \ref{pro:proxy-convergence} together with the fact that $|J|\leq\min(|K(t)|,|K(s)|)$ and that $X$ is Gaussian. 
\end{proof}
\begin{cor}
\label{cor:proxy-predictor-convergence}
Let $t\in \Delta\cap\pi_N$ be fixed and let the approximation $\hat{X}^j_{t,N}$ of the predictor be given by \eqref{eq:prediction-based-on-proxy}. Then, as $\min_{(t,s)\in \pi_N^{\textcolor{black}{2}}}|J(t,s)|\to \infty$,
$$
\hat{X}^j_{t,N} \to \hat{X}_{t,N}
$$
in probability, where $\hat{X}_{t,N}$ is given by \eqref{eq:discrete-predictor}.
\end{cor}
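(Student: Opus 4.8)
The plan is to decompose the difference $\hat{X}^j_{t,N}-\hat{X}_{t,N}$ into a part coming from replacing the true observations $X_s$, $s\in\Delta_j\cap\pi_N$, by the proxies $X^{\text{prox}}_{s,K}$, and a part coming from replacing the population quantities $\textbf{R}_{N,\Delta_j}$ and $\textbf{b}_{N,\Delta_j}(t)$ by their empirical estimators $\hat{\textbf{R}}_{N,\Delta_j}$ and $\hat{\textbf{b}}_{N,\Delta_j}(t)$. Since $N$ is fixed, all the vectors and matrices involved are of fixed finite dimension, so convergence in probability may be checked entrywise and is preserved under finite sums and products.

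First I would observe that $\min_{(t,s)\in\pi_N^2}|J(t,s)|\le|K(s)|$ for every $s\in\pi_N$, so that the assumption $\min_{(t,s)\in\pi_N^2}|J(t,s)|\to\infty$ forces $|K(s)|\to\infty$ for all $s\in\pi_N$. Proposition \ref{pro:proxy-convergence} then gives $X^{\text{prox}}_{s,K}\to X_s$ almost surely (and in every $L^p(\Omega)$) for each $s$, hence the proxy vector $X^{\text{prox}}_{\Delta_j,K}$ converges in probability to the vector of true values. In parallel, Corollary \ref{cor:proxycov-convergence} yields $\hat{R}_J(t,s)\to R(t,s)$ in $L^p(\Omega)$ for every pair $(t,s)\in\pi_N^2$, so that $\hat{\textbf{R}}_{N,\Delta_j}\to\textbf{R}_{N,\Delta_j}$ and $\hat{\textbf{b}}_{N,\Delta_j}(t)\to\textbf{b}_{N,\Delta_j}(t)$ entrywise in probability.

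Next I would push these convergences through the weight formula $a_j(t)=\hat{\textbf{R}}_{N,\Delta_j}^{-1}\hat{\textbf{b}}_{N,\Delta_j}(t)$. Under the assumptions of Theorem \ref{thm:discrete} the matrix $\textbf{R}_{N,\Delta_j}$, being a principal submatrix of the full-rank covariance matrix $\textbf{R}_N$, is invertible. Since matrix inversion is continuous on the open set of invertible matrices, the continuous mapping theorem shows that $\hat{\textbf{R}}_{N,\Delta_j}$ is invertible with probability tending to one and that, on that event, $\hat{\textbf{R}}_{N,\Delta_j}^{-1}\to\textbf{R}_{N,\Delta_j}^{-1}$ in probability; combined with the convergence of $\hat{\textbf{b}}_{N,\Delta_j}(t)$, this gives $a_j(t)\to\textbf{a}(t):=\textbf{R}_{N,\Delta_j}^{-1}\textbf{b}_{N,\Delta_j}(t)$ in probability. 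Writing then
$$
\hat{X}^j_{t,N}-\hat{X}_{t,N}=\big(a_j(t)-\textbf{a}(t)\big)'X^{\text{prox}}_{\Delta_j,K}+\textbf{a}(t)'\big(X^{\text{prox}}_{\Delta_j,K}-\textbf{X}_{\Delta_j}\big),
$$
where $\textbf{X}_{\Delta_j}$ collects the true values $X_s$, $s\in\Delta_j\cap\pi_N$, both summands tend to $0$ in probability by Slutsky-type arguments, using that $\textbf{a}(t)$ is a fixed deterministic vector.

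The step I expect to be the main obstacle is the matrix-inversion argument: one must take care that $\hat{\textbf{R}}_{N,\Delta_j}$ need not be invertible for every finite sample, so that the precise statement is that it is invertible with probability approaching one and that, restricted to that event, its inverse converges to $\textbf{R}_{N,\Delta_j}^{-1}$. Once this is in place, the remainder is a routine combination of Proposition \ref{pro:proxy-convergence}, Corollary \ref{cor:proxycov-convergence}, and the continuous mapping theorem.
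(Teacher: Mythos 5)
Your argument is correct and follows essentially the same route as the paper, which simply states that the claim "follows directly from Proposition \ref{pro:proxy-convergence} and Corollary \ref{cor:proxycov-convergence}"; you have filled in the entrywise convergence, continuous-mapping, and Slutsky details that the paper leaves implicit. Your careful handling of the matrix-inversion step (invertibility only with probability tending to one) is consistent with the paper's remark immediately after the corollary explaining why the convergence cannot be upgraded to $L^p(\Omega)$.
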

\begin{proof}
The claim follows directly from Proposition \ref{pro:proxy-convergence} and Corollary \ref{cor:proxycov-convergence}.
\end{proof}
\begin{rem}
Note that the result above does not necessarily hold for the stronger $L^p(\Omega)$ convergence. Indeed, while Proposition \ref{pro:proxy-convergence} ensures that ${X}_{t,K}^{\text{prox}}$ converges in $L^p(\Omega)$, this is not necessarily true for the precision matrix $\hat{\textbf{R}}_{N,\Delta}^{-1}$. 
\end{rem}
\begin{lmm}
\label{lma:rho-estimator}
Let $\hat{\rho}_{M}(t,t|\Delta\cap\pi_N)$ be given by \eqref{eq:rho-estimator} and let $Z_{N}$ be a centered Gaussian vector with covariance $\hat{\textbf{R}}_{N}$. Let $\hat{\rho}(t,t|\Delta\cap\pi_N)$ denote the conditional variance associated to $Z_{N}$ and let $\rho(t,t|\Delta\cap\pi_N)$ denote the conditional variance associated to $X$, given by \eqref{eq:discrete-predictor}.
Then, as $M\to \infty$, we have  
$$
\hat{\rho}_{M}(t,t|\Delta\cap\pi_N) \to \hat{\rho}(t,t|\Delta\cap\pi_N)
$$
in probability. Moreover, $\hat{\rho}(t,t|\Delta\cap\pi_N)$ is deterministic and satisfies, as $\min_{(t,s)\in\pi_N^{\textcolor{black}{2}}}|J(t,s)|\to \infty$,
$$
\hat{\rho}(t,t|\Delta\cap\pi_N) \to \rho(t,t|\Delta\cap\pi_N).
$$
\end{lmm}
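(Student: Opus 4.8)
The plan is to separate the two sources of randomness — the Monte Carlo simulation of the $Z^m_N$ and the sampling of the observed curves — and to establish each convergence by conditioning on the latter. For the first claim I would freeze the observed data, so that the estimated covariance matrix $\hat{\textbf{R}}_{N}$, and with it the linear weights $\textbf{a}(t)=\hat{\textbf{R}}_{N,\Delta}^{-1}\hat{\textbf{b}}_{N,\Delta}(t)$ used in \eqref{eq:discrete-predictor}, become constants. Applying Theorem \ref{thm:discrete} to the centered Gaussian vector $Z_N$ with covariance $\hat{\textbf{R}}_{N}$, the simulated prediction $\hat Z^m_{t,N}=\textbf{a}(t)'Z^m_{\Delta,N}$ is exactly the conditional expectation $\E[Z^m_{t,N}\mid\mathcal{F}^{Z^m}_{\Delta\cap\pi_N}]$, so the summands $U_m:=(Z^m_{t,N}-\hat Z^m_{t,N})^2$ are, conditionally on the data, i.i.d.\ with common mean equal to the conditional variance of $Z_N$, i.e.\ $\hat\rho(t,t|\Delta\cap\pi_N)$; this mean is finite because each $U_m$ is (conditionally) the square of a Gaussian variable. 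The strong law of large numbers, applied to this conditionally i.i.d.\ sequence, then gives $\frac1M\sum_{m=1}^M U_m\to\hat\rho(t,t|\Delta\cap\pi_N)$ almost surely for almost every realisation of the data; integrating the resulting conditional probabilities (dominated convergence) upgrades this to convergence in probability unconditionally.

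For the second claim, the key point is that the conditional variance of a jointly Gaussian vector is a deterministic function of its covariance matrix, given by the Schur complement:
$$
\hat\rho(t,t|\Delta\cap\pi_N)=\hat R_J(t,t)-\hat{\textbf{b}}_{N,\Delta}(t)'\hat{\textbf{R}}_{N,\Delta}^{-1}\hat{\textbf{b}}_{N,\Delta}(t).
$$
In particular $\hat\rho(t,t|\Delta\cap\pi_N)$ does not depend on the simulated vectors at all — it is a fixed function of the estimated covariances, which is the sense in which it is ``deterministic''. The corresponding identity for the underlying process, $\rho(t,t|\Delta\cap\pi_N)=R(t,t)-\textbf{b}_{N,\Delta}(t)'\textbf{R}_{N,\Delta}^{-1}\textbf{b}_{N,\Delta}(t)$, is just the finite-dimensional analogue of Theorem \ref{thm:Gaussian-regular-law}. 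Corollary \ref{cor:proxycov-convergence} gives entrywise convergence of $\hat{\textbf{R}}_{N}$ to $\textbf{R}_{N}$ (and of $\hat{\textbf{b}}_{N,\Delta}(t)$ to $\textbf{b}_{N,\Delta}(t)$) in $L^p$, hence in probability, as $\min_{(t,s)\in\pi_N^2}|J(t,s)|\to\infty$; since $\textbf{R}_{N,\Delta}$ is invertible, matrix inversion is continuous near $\textbf{R}_{N,\Delta}$, so $\hat{\textbf{R}}_{N,\Delta}^{-1}\to\textbf{R}_{N,\Delta}^{-1}$ and the Schur complements converge, yielding $\hat\rho(t,t|\Delta\cap\pi_N)\to\rho(t,t|\Delta\cap\pi_N)$ in probability.

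The main obstacle I anticipate is bookkeeping rather than substance: one must be scrupulous about conditioning on the $\sigma$-algebra generated by the observed curves — with respect to which $\hat{\textbf{R}}_{N}$, $\hat{\textbf{b}}_{N,\Delta}(t)$ and $\textbf{a}(t)$ are measurable — before invoking the law of large numbers, and then transfer the statement back to the unconditional level. A secondary nuisance is the null event on which $\hat{\textbf{R}}_{N,\Delta}$ is singular (so that neither $\hat Z^m_{t,N}$ nor $\hat\rho$ is given by the stated formulae); this is harmless, since by Corollary \ref{cor:proxycov-convergence} its probability tends to zero and, in any case, invertibility of $\hat{\textbf{R}}_{N,\Delta}$ is already part of the standing assumptions of step (iii). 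Everything else — the Gaussian moment bound, the Schur-complement formulae, and the continuity of matrix inversion — is routine.
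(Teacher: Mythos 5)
Your proposal is correct and follows essentially the same route as the paper's own (very terse) proof: a law of large numbers for the Monte Carlo average, and the observation that $\hat{\rho}(t,t|\Delta\cap\pi_N)$ is a fixed (Schur-complement) function of $\hat{\textbf{R}}_{N}$ combined with Corollary \ref{cor:proxycov-convergence} and continuity of matrix inversion. The extra care you take with conditioning on the observed curves before invoking the LLN is exactly the bookkeeping the paper leaves implicit.
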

\begin{proof}
The first claim is a simple application of the law of large numbers. 
For the second claim, it suffices to observe that $\hat{\rho}(t,t|\Delta\cap\pi_N)$ depends only on the covariance $\hat{\mathbf{R}}_N$ given by \eqref{eq:cov-proxy-estimation}. Thus the claim follows directly from Corollary \ref{cor:proxycov-convergence}. 
\end{proof}
\begin{lmm}
\label{lma:integral}
Let $L>0$ be fixed and let $\phi(z)$ be the density function of the standard normal distribution. Then for every bounded function $f$ the mapping $\textcolor{black}{\mathbb{R}\times (0,\infty)} \mapsto \mathbb{R}$ defined by 
$$
(x,c) \mapsto \int_{-L}^L f(x+zc)\phi(z)dz 
$$
is continuous.
\end{lmm}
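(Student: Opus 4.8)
\noindent
The plan is to establish continuity sequentially: fix $(x_0,c_0)\in\R^2$, take an arbitrary sequence $(x_n,c_n)\to(x_0,c_0)$, and show that $\Psi(x_n,c_n)\to\Psi(x_0,c_0)$, where $\Psi(x,c):=\int_{-L}^L f(x+zc)\phi(z)\,dz$. A convenient first reduction: since $\phi$ is even and $[-L,L]$ symmetric, the substitution $z\mapsto-z$ gives $\Psi(x,c)=\int_{-L}^L f(x-zc)\phi(z)\,dz=\Psi(x,-c)$, so $\Psi$ is even in $c$ and it suffices to treat $c_0\ge 0$.

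\textbf{Case $c_0>0$.} For $n$ large, $c_n>c_0/2>0$, and the change of variables $u=x_n+zc_n$ rewrites $\Psi(x_n,c_n)=\int_{\R} f(u)\,p_n(u)\,du$ with $p_n=q_n\1_{I_n}$, where $q_n(u)=\tfrac1{c_n}\phi\!\big(\tfrac{u-x_n}{c_n}\big)$ is the $N(x_n,c_n^2)$ density and $I_n=[x_n-Lc_n,\,x_n+Lc_n]$. The crux is to show $p_n\to p_0$ in $L^1(\R)$; since $p_n$ is not itself a probability density, I would pass through the untruncated $q_n$ first. For each fixed $u$ one has $q_n(u)\to q_0(u)$ (the Gaussian density is continuous in its location and scale, using $c_0>0$), and since $\int_\R q_n(u)\,du=1=\int_\R q_0(u)\,du$, Scheff\'e's lemma yields $\|q_n-q_0\|_{L^1}\to 0$. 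For the truncation, $\|p_n-p_0\|_{L^1}\le\|q_n-q_0\|_{L^1}+\int_{I_n\triangle I_0}q_0(u)\,du\le\|q_n-q_0\|_{L^1}+\|q_0\|_\infty\,|I_n\triangle I_0|$, and the last term vanishes since $\|q_0\|_\infty=\phi(0)/c_0<\infty$ and the endpoints of $I_n$ converge to those of $I_0$. Hence, by boundedness of $f$, $|\Psi(x_n,c_n)-\Psi(x_0,c_0)|\le\|f\|_\infty\|p_n-p_0\|_{L^1}\to 0$.

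\textbf{Case $c_0=0$.} Here $\Psi(x_0,0)=f(x_0)(2\Phi(L)-1)$, and continuity at $(x_0,0)$ follows from the bounded convergence theorem as soon as $f$ is continuous at $x_0$; this covers every use of the lemma, where moreover $c=\sqrt{\rho(t,t|\Delta)}$ is bounded away from $0$ (being the strictly positive conditional variance at a point $t\notin\Delta$), so only the case $c_0>0$ is actually invoked. I expect the main obstacle to be the $L^1$-estimate in the case $c_0>0$: the location and scale of the Gaussian and the truncation window all move at once, but Scheff\'e's lemma disposes of the change of shape, while boundedness of $q_0$ together with convergence of the endpoints handles the moving window.
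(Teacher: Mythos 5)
Your proof is correct and takes a genuinely different route from the paper's. The paper performs the change of variables $z = \frac{x-x_n}{c_n} + \frac{cy}{c_n}$, which freezes the argument of $f$ as $f(x+cy)$ while moving the Gaussian weight and the integration limits, and then concludes by dominated convergence; you instead substitute $u = x_n + zc_n$ so that $f(u)$ is frozen and the entire burden falls on showing $L^1$-convergence of the truncated Gaussian densities $p_n = q_n\1_{I_n}$, which you obtain from Scheff\'e's lemma for the untruncated densities plus a $\|q_0\|_\infty\,|I_n\triangle I_0|$ bound for the moving window. Both arguments are valid for arbitrary bounded measurable $f$, and both require $c_0\neq 0$ (the paper's substitution divides by $c$ and $c_n$ just as yours requires $c_n > c_0/2 > 0$). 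Your treatment is slightly more careful on two points the paper leaves implicit: the reduction to $c_0\ge 0$ by evenness, and, more substantively, the observation that the lemma as literally stated fails at $c=0$ for discontinuous bounded $f$ (e.g.\ $f=\1_{[0,\infty)}$ gives a jump along $c=0$ and a mismatch of one-sided limits in $c$), which matters in principle since the lemma is applied in Step 1 of Theorem \ref{thm:data-driven-predictor} with the step function $\hat f_K$; your remark that the relevant $c=\sqrt{\rho(t,t|\Delta)}$ is positive in all invocations is exactly the right way to dispose of this. The trade-off is that the paper's argument is shorter and avoids Scheff\'e entirely, while yours isolates the analytic content as an $L^1$-stability statement for location--scale families, which is arguably more transparent.
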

\begin{proof}
Let $x_n\to x\textcolor{black}{\in \mathbb{R}}$ and $c_n\to c\textcolor{black}{>0}$. Performing a change of variable $z = \frac{x-x_n}{c_n} + \frac{cy}{c_n}$ we see that
$$
\int_{-L}^L f(x_n+zc_n)\phi(z)dz = \frac{c}{\sqrt{2\pi}c_n} \int_{A_n}^{B_n} f(x+cy)\exp\left(-\frac12\left(\frac{x-x_n}{c_n} + \frac{cy}{c_n}\right)^2\right)dy,
$$
where 
$$
A_n = \frac{x_n-x}{c} - \frac{c_nL}{c}\qquad\textrm{ and }\qquad
B_n = \frac{x_n-x}{c} + \frac{c_nL}{c}.
$$
Since $A_n \to -L$ and $B_n \to L$, the claim follows immediately by continuity of the exponential function and the fact that boundedness of $f$ allows us to apply dominated convergence theorem.
\end{proof}
\begin{rem}
\changes{
We actually apply the above lemma for continuous function $f$ and for $c\in(0,\infty)$. Note that in the case of continuous $f$, we have continuity over $(x,c)\in \mathbb{R}^2$.
}
\end{rem}
\begin{proof}[Proof of Theorem \ref{thm:data-driven-predictor}]
Let $t$ and $j$ be fixed. We proceed by studying the limits one by one and divide the proof accordingly into four steps.\\
\textbf{Step 1 (limit $M\to \infty$):} \\
Since the function $\hat{f}_K$ is bounded, Lemma \ref{lma:rho-estimator} and Lemma \ref{lma:integral} give
$$
\lim_{M\to \infty} \hat{Y}_{t,L,J^\ast,N,M} = \int_{-L}^L \hat{f}_K\left(t, \hat{X}^j_{t,N} + z \sqrt{\hat{\rho}(t,t|\Delta\cap\pi_N)}\right) \phi(z)dz.
$$
\textbf{Step 2 (limit $J^\ast \to \infty$):}\\
We write, using in the sequel the notation $\hat{X}_{t,N,j}$ for the discrete approximation of $\mathbb{E}[X^j_t|\mathcal{F}_{\Delta}]$ obtained via Theorem~\ref{thm:discrete},
\begin{equation*}
\begin{split}
&\int_{-L}^L \hat{f}_K\left(t, \hat{X}^j_{t,N} + z \sqrt{\hat{\rho}(t,t|\Delta\cap\pi_N)}\right) \phi(z)dz - \int_{-L}^L f\left(t, \hat{X}_{t,N,j} + z \sqrt{\rho(t,t|\Delta\cap\pi_N)}\right) \phi(z)dz\\
&= \int_{-L}^L \hat{f}_K\left(t, \hat{X}^j_{t,N} + z \sqrt{\hat{\rho}(t,t|\Delta\cap\pi_N)}\right) \phi(z)dz - \int_{-L}^L f\left(t, \hat{X}^j_{t,N} + z \sqrt{\hat{\rho}(t,t|\Delta\cap\pi_N)}\right) \phi(z)dz\\
&+\int_{-L}^L f\left(t, \hat{X}^j_{t,N} + z \sqrt{\hat{\rho}(t,t|\Delta\cap\pi_N)}\right) \phi(z)dz - \int_{-L}^L f\left(t, \hat{X}_{t,N,j} + z \sqrt{\rho(t,t|\Delta\cap\pi_N)}\right) \phi(z)dz.
\end{split}
\end{equation*}

For the second difference, note that $\hat{X}^j_{t,N} \to \hat{X}_{t,N,j}$ by Corollary \ref{cor:proxy-predictor-convergence} and $\hat{\rho}(t,t|\Delta\cap\pi_N) \to \rho(t,t|\Delta\cap\pi_N)$ by Lemma \ref{lma:rho-estimator}. 
Since $f$ is continuous, we may apply Lemma \ref{lma:integral} and the continuous mapping theorem to conclude
$$
\int_{-L}^L f\left(t, \hat{X}^j_{t,N} + z \sqrt{\hat{\rho}(t,t|\Delta\cap\pi_N)}\right) \phi(z)dz - \int_{-L}^L f\left(t, \hat{X}_{t,N,j} + z \sqrt{\rho(t,t|\Delta\cap\pi_N)}\right) \phi(z)dz \to 0.
$$
The first difference converges to $0$ as well. To see this, we apply the fact that the empirical quantile function $\hat{Q}_{t,K}$ converges uniformly (and almost surely) to $Q_t$ on compact intervals. Hence, $\hat{f}_K$ converges uniformly to $f$ as well. 
%Thus
%$$
%\int_{-L}^L \hat{f}_J\left(t, \tilde{X}_{t,J,N} + z \sqrt{\hat{\rho}_{J}(t,t|\Delta_N)}\right) \phi(z)dz - \int_{-L}^L f\left(t, \tilde{X}_{t,J,N} + z \sqrt{\hat{\rho}_{J}(t,t|\Delta_N)}\right) \phi(z)dz \to 0.
%$$

Thus, we have 
$$
\lim_{J^\ast\to \infty} \int_{-L}^L \hat{f}_K\left(t, \hat{X}^j_{t,N} + z \sqrt{\hat{\rho}(t,t|\Delta\cap\pi_N)}\right) \phi(z)dz = \int_{-L}^L f\left(t, \hat{X}_{t,N,j} + z \sqrt{\rho(t,t|\Delta\cap\pi_N)}\right) \phi(z)dz.
$$

\textbf{Step 3 (limit $N\to \infty$):}\\
By Theorem \ref{thm:discrete} we have $\hat{X}_{t,N,j} \to \hat X^j_t=\mathbb{E}[X^j_t|\mathcal{F}_\Delta]$ in $L^2(\Omega)$ as $N\to\infty$. 
Consequently, we also have $\rho(t,t|\Delta\cap\pi_N) \to \rho(t,t|\Delta)$. 
As in the previous step, we may apply continuity of $f$, Lemma \ref{lma:integral}, and the continuous mapping theorem to get
$$
\lim_{N\to \infty}\int_{-L}^L f\left(t, \hat{X}_{t,N,j} + z \sqrt{\rho(t,t|\Delta\cap\pi_N)}\right) \phi(z)dz = \int_{-L}^L f\left(t, \hat{X}^j_{t} + z \sqrt{\rho(t,t|\Delta)}\right) \phi(z)dz.
$$
\textbf{Step 4 (limit $L \to \infty$):}\\
Suppose first that $f\geq 0$. In this case the monotone convergence theorem applies and we obtain
$$
\lim_{L\to \infty}\int_{-L}^L f\left(t, \hat{X}^j_{t} + z \sqrt{\rho(t,t|\Delta)}\right) \phi(z)dz= \int_{-\infty}^\infty f\left(t, \hat{X}^j_{t} + z \sqrt{\rho(t,t|\Delta)}\right) \phi(z)dz.
$$
In the general case, bijectivity of $x\mapsto f(t,x)$ implies that $f$ changes sign only once. Considering integrals separately before and after this change point and applying the monotone convergence theorem concludes step 4 and the whole proof.

\end{proof}  
\textcolor{black}{
\begin{proof}[Proof of Proposition \ref{prop:convergence-rate}]
We have 
\begin{equation*}
\small{\begin{split}
&\hat{Y}_{t,L,M,K,N}-\left[Y_t | \mathcal{F}_{\Delta}^Y\right] \\
&= \int_{-L}^L \hat{f}_K\left(t, \hat{X}^j_{t,N} + z \sqrt{\hat{\rho}_M(t,t|\Delta\cap\pi_N)}\right) \phi(z)dz -\int_{-\infty}^\infty f\left(t, \hat{X}^j_{t} + z \sqrt{\rho(t,t|\Delta)}\right) \phi(z)dz\\
&=\int_{-L}^L \hat{f}_K\left(t, \hat{X}^j_{t,N} + z \sqrt{\hat{\rho}_M(t,t|\Delta\cap\pi_N)}\right) \phi(z)dz - \int_{-L}^L f\left(t, \hat{X}^j_{t,N} + z \sqrt{\hat{\rho}_M(t,t|\Delta\cap\pi_N)}\right) \phi(z)dz\\
&+\int_{-L}^L f\left(t, \hat{X}^j_{t,N} + z \sqrt{\hat{\rho}_M(t,t|\Delta\cap\pi_N)}\right) \phi(z)dz - \int_{-L}^L f\left(t, \hat{X}_{t,N,j} + z \sqrt{\rho(t,t|\Delta\cap\pi_N)}\right) \phi(z)dz\\
&+\int_{-L}^L f\left(t, \hat{X}_{t,N,j} + z \sqrt{\rho(t,t|\Delta\cap\pi_N)}\right) \phi(z)dz  - \int_{-L}^L f\left(t, \hat{X}^j_{t} + z \sqrt{\rho(t,t|\Delta)}\right) \phi(z)dz\\
&+ \int_{-L}^L f\left(t, \hat{X}^j_{t} + z \sqrt{\rho(t,t|\Delta)}\right) \phi(z)dz - \int_{-\infty}^\infty f\left(t, \hat{X}^j_{t} + z \sqrt{\rho(t,t|\Delta)}\right) \phi(z)dz\\
&=: I_1 + I_2 + I_3 + I_4.
\end{split}
}
\end{equation*}
Hence it suffices to bound terms $I_1$ to $I_4$, that we now consider one by one. \\[2mm]
%%%%%%%%%%%%%%%%%%%%%
\textbf{Term $I_1$:} 
Using bijectivity and smoothness of $f$, it follows from Proposition \ref{prop:f-rep} that $F_t$ is continuously differentiable with non-vanishing derivative on the regions where the quantile function $Q_t$ is non-constant. 
Hence, following \cite{vanderVaart} (see example 3.9.24 page 387), the empirical quantile function $\hat{f}_K$ converges to $f$ uniformly on a compact set $U$ with rate $O_p\left(\frac{1}{\sqrt{K}}\right)$. 
Consequently, we obtain $I_1 = O_p\left(\frac{C_U}{\sqrt{K}}\right)$. 
Note that, in $I_1$, we consider 
$$
U=\left\{
\hat{X}^j_{t,N} + z\sqrt{\hat{\rho}_M(t,t|\Delta \cap \pi_N)}\ \middle|\ z\in[-L,L]
\right\}.
$$
%Note that here the compact set $U$ is determined by $\hat{X}^j_{t,N} + z\sqrt{\hat{\rho}_M(t,t|\Delta \cap \pi_N)} \in U$. 
Since $\hat{X}^j_{t,N} \to \hat{X}_{t,N,j}$ as $K\to \infty$ and $\hat{\rho}_M(t,t|\Delta \cap \pi_N)\to \rho(t,t|\Delta \cap \pi_N)$ as $M\to \infty$, $C_U$ is a random constant depending solely on $N$ and $L$.  Hence, 
$$
I_1 = O_p\left(\frac{C_{L,N}}{\sqrt{K}}\right).
$$
%%%%%%%%%%%%%%%%%%%%%
\textbf{Term $I_2$}: 
For the term $I_2$, we use Lipschitz continuity of $f$ and differentiability of the square root to get
\begin{equation*}
\small{\begin{split}
&\left|\int_{-L}^L f\left(t, \hat{X}^j_{t,N} + z \sqrt{\hat{\rho}_M(t,t|\Delta\cap\pi_N)}\right) \phi(z)dz - \int_{-L}^L f\left(t, \hat{X}_{t,N,j} + z \sqrt{\rho(t,t|\Delta\cap\pi_N)}\right) \phi(z)dz\right|\\[1mm]
&\leq C \left(|\hat{X}^j_{t,N} - \hat{X}_{t,N,j}| + |\hat{\rho}_M(t,t|\Delta\cap\pi_N) - \rho(t,t|\Delta\cap\pi_N)|\right).
\end{split}
}
\end{equation*}
Here, we note that $f$ is continuously differentiable, hence Lipschitz continuous on compact sets. In this case, the constant $C$ relates to the set 
\small{$$
U=\left\{
\hat{X}^j_{t,N} + z\sqrt{\hat{\rho}_M(t,t|\Delta \cap \pi_N)}\ \middle|\ z\in[-L,L]
\right\}\cup \left\{\hat{X}_{t,N,j} + z\sqrt{\rho(t,t|\Delta \cap \pi_N)}\ \middle|\ z\in[-L,L]
\right\}.
$$
}
By the same arguments as above, hence we can take the constant $C$ to depend solely on $L$ and $N$. 
Therefore, in order to bound $I_2$, it suffices to bound 
$$
\left(|\hat{X}^j_{t,N} - \hat{X}_{t,N,j}| + |\hat{\rho}_M(t,t|\Delta\cap\pi_N) - \rho(t,t|\Delta\cap\pi_N)|\right).
$$
Recall that here 
\begin{equation}
\label{eq:prediction-recall}
\hat{X}_{t,N,j} = \sum_{t_j \in \Delta_N} a_j(t) X_{t_j} 
\end{equation}
with $\textbf{a}(t) = \textbf{R}_{N}^{-1}\textbf{b}_{N}(t)$, 
and $\hat{X}^j_{t,N}$ given similarly as
\begin{equation}
\label{eq:prediction-approx-recall}
\hat{X}_{t,N}^j = (\hat{a}_j(t))'X_{\Delta_j,K}^{\text{prox}},
\end{equation}
where $
\hat{a}_j(t) = \hat{\textbf{R}}_{N,\Delta_j}^{-1}\hat{\textbf{b}}_{N,\Delta_j}(t)
$
is computed through the estimated covariance (based on proxies). Recall next that 
$$
{X}^{\text{prox}}_{t,K} =\Phi_t^{-1}\left(\hat{F}_{t,K}(Y_t)\right)
$$
and 
$$
X_t = \Phi_t^{-1}\left(F_t(Y_t)\right).
$$
Thus, we obtain by the standard Delta method and Donsker's Theorem that, for any fixed $t \in \pi_N$, we have 
$$
{X}^{\text{prox}}_{t,K} - X_t = O_p\left(\frac{1}{\sqrt{K}}\right).
$$
From this, it readily follows that, for any $(t,s) \in \pi_N^2$, we have
$$
\hat{R}_J(t,s) - R(t,s) = O_p\left(\frac{1}{\sqrt{K}}\right).
$$
Using that, for any invertible matrices $A$ and $B$, 
$
(A^{-1} - B^{-1}) = A^{-1}(B-A)B^{-1}
$
leads to 
$$
\hat{\textbf{R}}_{N,\Delta_j}^{-1} - \textbf{R}_{N}^{-1} = O_p\left(\hat{\textbf{R}}_{N,\Delta_j} - \textbf{R}_{N}\right).
$$
This, together with \eqref{eq:prediction-recall} and \eqref{eq:prediction-approx-recall} gives%\footnote{Here the constant $C_N$ appears through the fact that we are considering linear combinations of all possible points $t\in \pi_N$.}
$$
\hat{X}^j_{t,N} - \hat{X}_{t,N,j} = O_p\left(\frac{C_N}{\sqrt{K}}\right).
$$
For the difference  $
\hat{\rho}_M(t,t|\Delta\cap\pi_N) - \rho(t,t|\Delta\cap\pi_N),
$
we write 
$$
\hat{\rho}_M(t,t|\Delta\cap\pi_N) - \rho(t,t|\Delta\cap\pi_N) =\hat{\rho}_M(t,t|\Delta\cap\pi_N) - \hat{\rho}(t,t|\Delta\cap\pi_N) 
+\hat{\rho}(t,t|\Delta\cap\pi_N)- \rho(t,t|\Delta\cap\pi_N),
$$
%\begin{equation*}
%\begin{split}
%&\hat{\rho}_M(t,t|\Delta\cap\pi_N) - \rho(t,t|\Delta\cap\pi_N) \\
%&=\hat{\rho}_M(t,t|\Delta\cap\pi_N) - \hat{\rho}(t,t|\Delta\cap\pi_N) \\
%&+\hat{\rho}(t,t|\Delta\cap\pi_N)- \rho(t,t|\Delta\cap\pi_N),
%\end{split}
%\end{equation*}
where $\hat{\rho}(t,t|\Delta\cap\pi_N)$ is the conditional variance associated to a centered Gaussian vector $Z_N$ with covariance $\hat{\textbf{R}}_{N}$. Here, 
$$
\hat{\rho}_M(t,t|\Delta\cap\pi_N) - \hat{\rho}(t,t|\Delta\cap\pi_N) = O_p\left(\frac{C_N}{\sqrt{M}}\right)
$$
as $\hat{\rho}_M$ is computed using a standard Monte-Carlo method. For the remaining term 
$
\hat{\rho}(t,t|\Delta\cap\pi_N)- \rho(t,t|\Delta\cap\pi_N), 
$
it suffices to observe that $\hat{\rho}(t,t|\Delta\cap\pi_N)$ can be computed  by using the covariances $\hat{R}_N(t,s),\ (t,s)\in \pi^2_N$ while $\rho(t,t|\Delta\cap\pi_N)$ can be computed by using the true covariances $R(t,s), \ (t,s)\in \pi^2_N$. Hence, by similar considerations as above, we deduce 
$$
\hat{\rho}(t,t|\Delta\cap\pi_N)- \rho(t,t|\Delta\cap\pi_N) = O_p\left(\frac{C_N}{\sqrt{K}}\right).
$$
Combining these estimates gives 
$$
I_2 = O_p\left(\frac{C_{L,N}}{\sqrt{M}}\right) +O_p\left(\frac{C_{L,N}}{\sqrt{K}}\right).
$$
\textbf{Term $I_3$:} As for $I_2$, we can use Lipschitz continuity to get 
\begin{equation*}
\begin{split}
&\left|\int_{-L}^L f\left(t, \hat{X}_{t,N,j} + z \sqrt{\rho(t,t|\Delta\cap\pi_N)}\right) \phi(z)dz  - \int_{-L}^L f\left(t, \hat{X}^j_{t} + z \sqrt{\rho(t,t|\Delta)}\right) \phi(z)dz\right|\\[1mm]
&\leq C \left(|\hat{X}_{t,N,j}- \hat{X}^j_{t}| + |\sqrt{\rho(t,t|\Delta\cap \pi_N)} - \sqrt{\rho(t,t|\Delta)}|\right)
\end{split}
\end{equation*}
where, in this case, the constant $C$ is determined by the set 
\small{$$
U=\left\{\hat{X}_{t,N,j} + z\sqrt{\rho(t,t|\Delta \cap \pi_N)}\ \middle|\ z\in[-L,L]
\right\}\cup \left\{
\hat{X}^j_{t} + z\sqrt{\rho(t,t|\Delta)}\ \middle|\ z\in[-L,L]
\right\}.
$$
}
Since now $\hat{X}_{t,N,j} \to \hat{X}^j_t$ as $N\to \infty$, the constant $C$ can be taken to depend only on the parameter $L$. The rate in $N$ for $\hat{X}_{t,N,j}- \hat{X}^j_{t}$ is determined by the rate for which $\mathcal{F}_{\Delta_N}$ increases to $\mathcal{F}_{\Delta}$ as 
$$
\hat{X}_{t,N,j}- \hat{X}^j_{t} = \E\left[X_t | \mathcal{F}_{\Delta_N}\right] - \E\left[X_t | \mathcal{F}_{\Delta}\right]
$$
and, by Markov inequality, 
$$
|\hat{X}_{t,N,j}- \hat{X}^j_{t}| = O_p\left(\sqrt{\E\left(\hat{X}_{t,N,j}- \hat{X}^j_{t}\right)^2}\right).
$$
For the term $\sqrt{\rho(t,t|\Delta\cap \pi_N)} - \sqrt{\rho(t,t|\Delta)}$ we have, by using the triangle inequality for the $L^2(\Omega)$-norm,
$$
|\sqrt{\rho(t,t|\Delta\cap \pi_N)} - \sqrt{\rho(t,t|\Delta)}|= \left|\sqrt{\E(X_t^j - \hat{X}_{t,N,j})^2} - \sqrt{\E(X^j_t - \hat{X}^j_t)^2}\right|\leq \sqrt{\E(\hat{X}_t^j - \hat{X}_{t,N,j})^2}.
$$
%\begin{equation*}
%\begin{split}
%&|\sqrt{\rho(t,t|\Delta\cap \pi_N)} - \sqrt{\rho(t,t|\Delta)}|\\
%&= \left|\sqrt{\E(X_t - \hat{X}_{t,N})^2} - \sqrt{\E(X_t - \hat{X}_t)^2}\right|\\
%&\leq \sqrt{\E(\hat{X}_t - \hat{X}_{t,N})^2}.
%\end{split}
%\end{equation*}
Thus, 
$$
I_3= O_p\left(C_{L}\sqrt{\E\left(\hat{X}_{t,N,j}- \hat{X}^j_{t}\right)^2}\right).
$$
Noting that $I_4$ is exactly the final term appearing in Proposition \ref{prop:convergence-rate}, this completes the whole proof.
\end{proof}
}

%%%%%%%%%Bibliography

%\bibliographystyle{imsart-nameyear.bst} 
%\bibliography{Bibliography.bib}           
%\vspace{3mm} 

%%%%%%%%%Bibliography

%\bibliographystyle{imsart-nameyear} 

%\bibliographystyle{imsart-nameyear.bst} 

\bibliographystyle{plain}
\bibliography{Bibliography}           

\begin{thebibliography}{10}

\bibitem{BerlThom2004}
A.~Berlinet and C.~Thomas-Agnan.
\newblock {\em Reproducing Kernel Hilbert Spaces in Probability and
  Statistics}.
\newblock Springer Science $\&$ Business Media, 2004.

\bibitem{Biagini-Hu-Oksendal-Zhang-2008}
Francesca Biagini, Yaozhong Hu, Bernt {\O}ksendal, and Tusheng Zhang.
\newblock {\em Stochastic calculus for fractional {B}rownian motion and
  applications}.
\newblock Probability and its Applications (New York). Springer-Verlag London,
  Ltd., London, 2008.

\bibitem{Coeurjolly2000}
J.-F. Coeurjolly.
\newblock Simulation and identification of the fractional brownian motion: a
  bibliographical and comparative study.
\newblock {\em J. Stat. Software}, 5(7):1--53, 2000.

\bibitem{DelHal2013}
A.~Delaigle and P.~Hall.
\newblock Classification using censored functional data.
\newblock {\em J. Amer. Stat. Assoc.}, 108:1269--1283, 2013.

\bibitem{DelHal2016}
A.~Delaigle and P.~Hall.
\newblock Approximating fragmented functional data by segments of markov
  chains.
\newblock {\em Biometrika}, 103(4):779--799, 2016.

\bibitem{Deletal2020}
A.~Delaigle, P.~Hall, W.~Huang, and A.~Kneip.
\newblock Estimating the covariance of fragmented and other related types of
  functional data.
\newblock {\em J. Amer. Stat. Assoc.}, 2020.

\bibitem{DesPan2019}
M.H. Descary and V.~Panaretos.
\newblock Recovering covariance from functional fragments.
\newblock {\em Biometrika}, 106(1):145--160, 2019.

\bibitem{DomRiv2011}
C.-R. Dominique and L.~E. Rivera-Solis.
\newblock Mixed fractional brownian motion, short and long-term dependence and
  economic conditions: The case of the s$\&$p500 index.
\newblock {\em International Business and Management}, 3(2):1--6, 2011.

\bibitem{Fer2011}
F.~Ferraty, editor.
\newblock {\em Recent advances in Functional Data Analysis and related topics}.
  Springer, 2011.

\bibitem{FerVie2006}
F.~Ferraty and P.~Vieu.
\newblock {\em Nonparametric Functional Data Analysis: Theory and Practice}.
\newblock Springer Series in Statistics. Springer-Verlag, New York, 2006.

\bibitem{Gat2018}
J.~Gatheral, T.~Jaisson, and M.~Rosenbaum.
\newblock Volatility is rough.
\newblock {\em Quantitative finance}, 18(6):933--949, 2018.

\bibitem{Goletal2014}
Y.~Goldberg, Y.~Ritov, and A.~Mandelbaum.
\newblock Predicting the continuation of a function with applications to call
  center data.
\newblock {\em J. Statist. Plann. Inf.}, 147:53--65, 2014.

\bibitem{Jametal2000}
G.~James, T.J. Hastie, and C.~Sugar.
\newblock Principal component models for sparse functional data.
\newblock {\em Biometrika}, 87:587--602, 2000.

\bibitem{Janson-1997}
Svante Janson.
\newblock {\em Gaussian {H}ilbert spaces}, volume 129 of {\em Cambridge Tracts
  in Mathematics}.
\newblock Cambridge University Press, Cambridge, 1997.

\bibitem{KneLie2020}
A.~Kneip and D.~Liebl.
\newblock On the optimal reconstruction of partially observed functional data.
\newblock {\em Ann. Statist.}, 48(3):1692--1717, 2020.

\bibitem{Kra2015}
D.~Kraus.
\newblock Components and completion of partially observed functional data.
\newblock {\em J. R. Stat. Soc. Ser. B Stat. Methodol.}, 77:777--801, 2015.

\bibitem{Lie2013}
D.~Liebl.
\newblock Modeling and forecasting electricity spot prices: a functional data
  perspective.
\newblock {\em Ann. Appl. Statist.}, 7:1562--1592, 2013.

\bibitem{LieRam2019}
D.~Liebl and S.~Rameseder.
\newblock {Partially Observed Functional Data: The Case of Systematically
  Missing Parts}.
\newblock {\em Comput. Statist. and Data Anal.}, 131:104--115, 2019.

\bibitem{LinWang2022}
Z.~Lin and J.-L. Wang.
\newblock Mean and covariance estimation for functional snippets.
\newblock {\em J. Amer. Stat. Assoc.}, 117:348--360, 2022.

\bibitem{Linetal2021}
Z.~Lin, J.-L. Wang, and Q.~Zhong.
\newblock Basis expansions for functional snippets.
\newblock {\em Biometrika}, 108(3):709--726, 2021.

\bibitem{Mishura-2008}
Yuliya~S. Mishura.
\newblock {\em Stochastic calculus for fractional {B}rownian motion and related
  processes}, volume 1929 of {\em Lecture Notes in Mathematics}.
\newblock Springer-Verlag, Berlin, 2008.

\bibitem{Nualart-2006}
David Nualart.
\newblock {\em The {M}alliavin calculus and related topics}.
\newblock Probability and its Applications (New York). Springer-Verlag, Berlin,
  second edition, 2006.

\bibitem{Pipiras-Taqqu-2001}
Vladas Pipiras and Murad~S. Taqqu.
\newblock Are classes of deterministic integrands for fractional {B}rownian
  motion on an interval complete?
\newblock {\em Bernoulli}, 7(6):873--897, 2001.

\bibitem{Ramsay}
J.~Ramsay and B.~Silverman.
\newblock {\em Functional Data Analysis}.
\newblock Springer, 2005.

\bibitem{Sottinen-Viitasaari2018}
T.~Sottinen and L.~Viitasaari.
\newblock {Transfer principle for \emph{n}th order {F}ractional {B}rownian
  motion with applications to prediction and equivalence in law}.
\newblock {\em Theory of Probability and Mathematical Statistics},
  98(1):188--204, 2018.

\bibitem{Sottinen-Viitasaari-2016b}
Tommi Sottinen and Lauri Viitasaari.
\newblock Stochastic {A}nalysis of {G}aussian {P}rocesses via {F}redholm
  {R}epresentation.
\newblock {\em Int. J. Stoch. Anal.}, pages Art. ID 8694365, 15, 2016.

\bibitem{Sottinen-Viitasaari-2017b}
Tommi Sottinen and Lauri Viitasaari.
\newblock Prediction law of fractional {B}rownian motion.
\newblock {\em Statist. Probab. Lett.}, 129:155--166, 2017.

\bibitem{Sottinen-Yazigi-2014}
Tommi Sottinen and Adil Yazigi.
\newblock Generalized {G}aussian bridges.
\newblock {\em Stochastic Process. Appl.}, 124(9):3084--3105, 2014.

\bibitem{Tudor-2013}
Ciprian~A. Tudor.
\newblock {\em Analysis of Variations for Self-similar Processes}.
\newblock Springer International Publishing, 2013.

\bibitem{UllFin2013}
S.~Ullah and C.F. Finch.
\newblock Applications of functional data analysis: A systematic review.
\newblock {\em BMC Medical Research Methodology}, 13(43), 2013.

\bibitem{vanderVaart}
A.~van~der Vaart and J.~Wellner.
\newblock {\em Weak {C}onvergence and {E}mpirical {P}rocesses}.
\newblock Springer, 1996.

\bibitem{viitasaari-ilmonen2020}
Lauri Viitasaari and Pauliina Ilmonen.
\newblock On modeling a class of weakly stationary processes.
\newblock {\em Front. Appl. Math. Stat.}, 15, 2020.

\bibitem{Wanetal2016}
J.~Wang, J.~Chiou, and H.G. M\"uller.
\newblock Functional data analysis.
\newblock {\em Annual Review of Statistics and Its Applications}, 3:257--295,
  2016.

\bibitem{Yaoetal2005}
F.~Yao, H.G. M\"uller, and J.L. Wang.
\newblock Functional data analysis for sparse longitudinal data.
\newblock {\em J. Amer. Stat. Assoc.}, 100:577--590, 2005.

\bibitem{Yazigi-2015}
Adil Yazigi.
\newblock Representation of self-similar {G}aussian processes.
\newblock {\em Statist. Probab. Lett.}, 99:94--100, 2015.

\bibitem{ZhangChen2022}
A.R. Zhang and K.~Chen.
\newblock Nonparametric covariance estimation for mixed longitudinal studies,
  with applications in midlife women's health.
\newblock {\em Stat. Sin.}, 32(1):345--365, 2022.

\end{thebibliography}
\vspace{3mm} 

%%%%%%%%%%

\end{document}